\DeclareMathOperator{\interior}{int}
\DeclareMathOperator{\Id}{Id}
\DeclareMathOperator{\Stab}{Stab}
\DeclareMathOperator{\supp}{supp}
\DeclareMathOperator{\SO}{SO}
\DeclareMathOperator{\SU}{SU}
\DeclareMathOperator{\Sp}{Sp}
\DeclareMathOperator{\U}{U}
\DeclareMathOperator{\F}{F}
\DeclareMathOperator{\Special}{S}
\DeclareMathOperator{\Spin}{Spin}
\DeclareMathOperator{\Lip}{Lip}
\DeclareMathOperator{\proj}{proj}
\DeclareMathOperator{\T}{T}
\DeclareMathOperator{\ExceptionalLieF}{F}
\DeclareMathOperator{\Hol}{H}
\DeclareMathOperator{\Isom}{Isom}
\DeclareMathOperator{\diam}{diam}
\DeclareMathOperator{\len}{len}
\let\Pr\relax
\DeclareMathOperator{\Pr}{Pr}
\DeclareMathOperator{\ad}{ad}
\DeclareMathOperator{\Ad}{Ad}
\DeclareMathOperator{\BP}{BP}
\DeclareMathOperator{\Core}{Core}
\DeclareMathOperator{\Hull}{Hull}
\DeclareMathOperator{\im}{im}
\newcommand{\N}{\mathbb{N}}
\newcommand{\R}{\mathbb{R}}
\newcommand{\C}{\mathbb{C}}
\newcommand{\LieG}{\mathfrak{g}}
\newcommand{\LieA}{\mathfrak{a}}
\newcommand{\LieN}{\mathfrak{n}}
\newcommand{\LieK}{\mathfrak{k}}
\newcommand{\LieM}{\mathfrak{m}}
\newcommand{\LieP}{\mathfrak{p}}
\newcommand{\LieSimple}{\mathfrak{s}}
\newcommand{\limitset}{\Lambda_\Gamma}
\newcommand{\loglimitset}{\Lambda_\Gamma^{\log}}
\DeclareFontFamily{U}{mathb}{\hyphenchar\font45}
\DeclareFontShape{U}{mathb}{m}{n}{
<5> <6> <7> <8> <9> <10> gen * mathb
<10.95> mathb10 <12> <14.4> <17.28> <20.74> <24.88> mathb12
}{}
\DeclareSymbolFont{mathb}{U}{mathb}{m}{n}
\DeclareMathSymbol{\bigast}{2}{mathb}{"06}
\def\XXint#1#2#3{{\setbox0=\hbox{$#1{#2#3}{\int}$}
\vcenter{\hbox{$#2#3$}}\kern-.5\wd0}}
\theoremstyle{plain}
\newtheorem{theorem}{Theorem}[section]
\newtheorem{proposition}[theorem]{Proposition}
\newtheorem{lemma}[theorem]{Lemma}
\newtheorem{corollary}[theorem]{Corollary}
\theoremstyle{definition}
\newtheorem{definition}[theorem]{Definition}
\theoremstyle{remark}
\newtheorem{remark}[theorem]{Remark}
\setlist[enumerate,1]{ref=(\arabic*)}
\setlist[enumerate,2]{ref=(\theenumi)(\alph*)}
\setlist[enumerate,3]{ref=(\theenumi)(\theenumii)(\roman*)}
\setlist[enumerate,4]{ref=(\theenumi)(\theenumii)(\theenumiii)(\Alph*)}
\newlist{alternative}{enumerate}{4}     
\setlist[alternative,1]{label=(\arabic*), ref=(\arabic*)}
\setlist[alternative,2]{label=(\alph*), ref=(\thealternativei)(\alph*)}
\setlist[alternative,3]{label=(\roman*), ref=(\thealternativei)(\thealternativeii)(\roman*)}
\setlist[alternative,4]{label=(\Alph*), ref=(\thealternativei)(\thealternativeii)(\thealternativeiii)(\Alph*)}
\Crefname{enumi}{Property}{Properties}
\Crefname{alternativei}{Alternative}{Alternatives}
\Crefname{section}{\S}{\S\S}
\Crefname{subsection}{\S}{\S\S}
\begin{document}


\title[Exponential mixing of frame flows]{Exponential mixing of frame flows for convex cocompact locally symmetric spaces}

\author{Michael Chow}
\address{Department of Mathematics, Yale University, New Haven, Connecticut 06511}
\email{mikey.chow@yale.edu}

\author{Pratyush Sarkar}
\address{Department of Mathematics, UC San Diego, San Diego, California 92093}
\email{psarkar@ucsd.edu}

\date{\today}

\begin{abstract}
Let $G$ be a connected center-free simple real algebraic group of rank one and $\Gamma < G$ be a Zariski dense torsion-free convex cocompact subgroup. We prove that the frame flow on $\Gamma \backslash G$, i.e., the right translation action of a one-parameter subgroup $\{a_t\}_{t \in \R} < G$ of semisimple elements, is exponentially mixing with respect to the Bowen--Margulis--Sullivan measure. The key step is proving suitable generalizations of the local non-integrability condition and the non-concentration property which are essential for Dolgopyat's method. This generalizes the work of Sarkar--Winter for $G = \SO(n, 1)^\circ$ and also strengthens the mixing result of Winter in the convex cocompact case.
\end{abstract}

\maketitle


\setcounter{tocdepth}{1}
\tableofcontents

\section{Introduction}
Let $\mathbb{H}_\mathbb{K}^n$ for $\mathbb{K} \in \{\mathbb{R}, \mathbb{C}, \mathbb{H}, \mathbb{O}\}$ be the real, complex, quaternionic, or octonionic hyperbolic spaces indexed by integers $n \geq 2$ ($n = 2$ if $\mathbb{K} = \mathbb{O}$). Let $G = \Isom^+(\mathbb{H}_\mathbb{K}^n)$ be its group of orientation preserving isometries. Let $\Gamma < G$ be a torsion-free discrete subgroup and $X = \Gamma \backslash \mathbb{H}_\mathbb{K}^n$. Any (irreducible) rank one locally symmetric space (of noncompact type) is obtained in this fashion. We can identify $X$, its unit tangent bundle $\T^1(X)$, and its holonomy bundle $\Hol(X)$ with $\Gamma \backslash G/K$, $\Gamma \backslash G/M$, and $\Gamma \backslash G$, where $M < K$ are appropriate compact subgroups of $G$. The holonomy bundle $\Hol(X)$ can be realized as a subbundle of the frame bundle $\F(X)$ and it can also be thought of as the $\mathbb{K}$-frame bundle for $\mathbb{K} \in \{\mathbb{R}, \mathbb{C}, \mathbb{H}\}$. It is natural to consider this subbundle because it is an invariant subset of $\F(X)$ under the frame flow. Let $A = \{a_t\}_{t \in \R} < G$ be a one-parameter subgroup of semisimple elements such that its right translation action corresponds to the geodesic flow on $\Gamma \backslash G/M$ and the frame flow on $\Gamma \backslash G$. Let $\mathsf{m}$ be the Bowen--Margulis--Sullivan probability measure on $\Gamma \backslash G$, i.e., the $M$-invariant lift of the one on $\Gamma \backslash G/M$ which is known to be the measure of maximal entropy. Whether the frame flow is \emph{exponentially} mixing with respect to $\mathsf{m}$ is a fundamental question in the study of dynamics. If $\Gamma$ is a lattice, $\mathsf{m}$ coincides with the $G$-invariant probability measure and there is a rich history in the literature establishing exponential mixing of the frame flow with respect to $\mathsf{m}$ using representation theory (see for example \cite{Rat87,Moo87}). Such results have many applications including orbit counting, equidistribution, prime geodesic theorems, and shrinking target problems (see for example \cite{DRS93,EM93,BO12,MMO14,MO15,KO21,TW22}).

In this paper, our hypothesis is that $\Gamma$ is \emph{Zariski dense and convex cocompact}. The first condition is necessary because otherwise it is not hard to see that the frame flow on $\Gamma \backslash G$ is not even ergodic (cf. \cite{FS90,Win15,SW21}, although it is not mentioned explicitly). The second technical condition means that the smallest closed convex subset of $X$ containing all closed geodesics is compact. Although it does not allow the existence of cusps, it gives a large class of rank one locally symmetric spaces which includes the compact ones.

Let us assume the aforementioned setting. One can still prove exponential mixing of the frame flow using representation theory provided that there is an appropriate spectral gap as in the work of Mohammadi--Oh \cite{MO15} for a certain subclass of \emph{real} hyperbolic manifolds (see also the further work of Edwards--Oh \cite{EO21} for the \emph{geodesic} flow). In general, however, such a spectral gap does not exist. Using an alternative approach, the works of Babillot \cite{Bab02} and Winter \cite{Win15} show that the frame flow is mixing. Using Dolgopyat's method \cite{Dol98}, Stoyanov \cite{Sto11} proved exponential mixing of the \emph{geodesic} flow (see also the works of Li--Pan \cite{LP22} and Khalil \cite{Kha21} for the geometrically finite case). Recently, by developing a frame flow version of Dolgopyat's method, Sarkar--Winter \cite{SW21} proved exponential mixing of the frame flow for \emph{real} hyperbolic manifolds. In this paper, we generalize the result of Sarkar--Winter to rank one locally symmetric spaces. For any $\alpha \in (0, 1]$, we denote by $C_{\mathrm{c}}^{0,\alpha}(\Gamma \backslash G)$ the space of compactly supported $\alpha$-H\"{o}lder continuous functions on $\Gamma \backslash G$ endowed with the $\alpha$-H\"{o}lder norm $\|\cdot\|_{C^{0,\alpha}}$.

\begin{theorem}
\label{thm:TheoremExponentialMixingOfFrameFlow}
Let $\alpha \in (0, 1]$. There exist $\eta_\alpha > 0$ and $C > 0$ (independent of $\alpha$) such that for all $\phi, \psi \in C_{\mathrm{c}}^{0,\alpha}(\Gamma \backslash G)$ and $t > 0$, we have
\begin{align*}
\left|\int_{\Gamma \backslash G} \phi(xa_t)\psi(x) \, d\mathsf{m}(x) - \mathsf{m}(\phi) \cdot \mathsf{m}(\psi)\right| \leq C e^{-\eta_\alpha t} \|\phi\|_{C^{0,\alpha}} \|\psi\|_{C^{0,\alpha}}.
\end{align*}
\end{theorem}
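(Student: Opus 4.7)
The plan is to reduce Theorem~\ref{thm:TheoremExponentialMixingOfFrameFlow} to a uniform spectral estimate for a family of twisted Ruelle transfer operators and then to establish that estimate by adapting Dolgopyat's method to the $M$-extension, following the strategy that Sarkar--Winter carried out in the real hyperbolic setting. Since $\Gamma$ is convex cocompact, the non-wandering set of the geodesic flow on $\Gamma \backslash G / M$ is compact, and Bowen--Ratner--Stoyanov-type constructions produce a Markov section with a H\"older roof function $\tau$ modeling the geodesic flow as a suspension of a one-sided subshift $\sigma \colon \Sigma \to \Sigma$. I then lift this coding to $\Gamma \backslash G$ by tracking the holonomy element in $M$ accumulated along each symbolic trajectory, obtaining an $M$-valued H\"older cocycle $\vartheta$ over $\sigma$ so that the frame flow is modeled as a skew product over the base symbolic flow.

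For each $s \in \mathbb{C}$ with $\Re(s)$ near the critical exponent $\delta$ and each irreducible unitary representation $\rho \in \widehat{M}$, I would form the twisted transfer operator
\begin{align*}
(\mathcal{L}_{s, \rho} f)(u) = \sum_{\sigma(v) = u} e^{(F - s \tau)(v)} \, \rho(\vartheta(v))^{*} f(v)
\end{align*}
acting on H\"older vector-valued functions on $\Sigma$, where $F$ is the Gibbs potential selecting the BMS measure. Decomposing $\phi, \psi$ into $M$-isotypic components and writing the Laplace transform of $\langle \phi \circ a_t, \psi\rangle - \mathsf{m}(\phi) \cdot \mathsf{m}(\psi)$ in terms of resolvents of $\mathcal{L}_{s, \rho}$, this reduces Theorem~\ref{thm:TheoremExponentialMixingOfFrameFlow} to analyticity of $(I - \mathcal{L}_{s, \rho})^{-1}$ on a strip $\{\Re(s) > \delta - \eta\}$ together with polynomial growth in $|\Im(s)|$ and in $\rho$; this is standard Paley--Wiener / Pollicott--Ruelle.

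The spectral estimate is in turn delivered by Dolgopyat's machinery, which produces a uniform $L^2$ contraction of a high iterate of $\mathcal{L}_{s, \rho}$ on a suitable Lipschitz cone whenever $|\Im(s)|$ or the highest weight of $\rho$ is large, via cancellations between pairs of inverse branches of $\sigma^N$. The cancellations rest on two geometric inputs: a \emph{local non-integrability condition} (LNIC), asserting that the temporal distance function together with $\vartheta$ has a transverse derivative non-degenerate in every direction of $\mathbb{R} \oplus \LieM$, and a \emph{non-concentration property} (NCP), a quantitative statement that on small scales the projection of the limit set to an unstable leaf is not supported on a proper affine subspace.

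The decisive and most novel step is the verification of LNIC and NCP for arbitrary rank one $G$. In $\SO(n, 1)^\circ$ the unstable horospherical group is abelian and $M \cong \SO(n - 1)$ acts on it by rotations, so both properties reduce to matrix computations in concrete coordinates together with Zariski density of $\Gamma$. For general rank one $G$, however, the unstable Lie algebra $\LieN$ is Heisenberg-type (two-step nilpotent), and the $M$-action and cocycle $\vartheta$ couple the root subspaces of $\LieN$ in a more intricate way. I would therefore work invariantly with the $\Ad(a_t)$- and $M$-weight decomposition of $\LieN$, exploit Zariski density to produce elements of $\Gamma$ whose attracting-repelling configurations on the boundary realize all required derivatives of $(\tau, \vartheta)$ along the limit set, and deduce LNIC and NCP from the resulting non-degeneracy. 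Inserting these into the Dolgopyat framework and then into the Laplace transform argument above completes the proof of Theorem~\ref{thm:TheoremExponentialMixingOfFrameFlow}.
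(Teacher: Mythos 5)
Your overall architecture — Markov section for the convex cocompact geodesic flow, an $M$-valued holonomy cocycle making the frame flow a skew product, transfer operators twisted by $\rho \in \widehat{M}$, a Paley--Wiener/Pollicott reduction of the correlation bound to uniform spectral estimates, and Dolgopyat cancellations driven by LNIC and NCP — is exactly the paper's route. The gap is in the step you yourself flag as decisive: your formulation of NCP, namely that at small scales the limit set inside an unstable leaf is quantitatively not concentrated near a proper affine subspace (i.e., non-concentration in \emph{every} unit direction of $\LieN^+$), is precisely the statement that fails when $\mathbb{K} \neq \R$. The unstable algebra is graded, $\LieN^+ = \LieSimple^+ \oplus [\LieSimple^+, \LieSimple^+]$ with $\LieSimple^+ = \LieG_{-\alpha}$, and the renormalization along the flow is the anisotropic dilation $\Ad_{a_t}$ scaling the two layers by $e^{-t}$ and $e^{-2t}$. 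Under the self-similarity/rescaling argument that converts Zariski density into a scale-uniform statement, the rescaled limit-set directions collapse onto $\LieSimple^+$, so no non-concentration bound can hold for test directions in (or near) $[\LieSimple^+, \LieSimple^+] = \LieG_{-2\alpha}$. The paper's \cref{pro:NonConcentrationProperty} is therefore restricted to directions $w \in \mathfrak{R}_\varkappa(\LieSimple^+)$ having a definite component along $\LieSimple^+$, and the balls are Carnot--Carath\'eodory balls (also needed so that the Patterson--Sullivan measure is doubling, which your Euclidean picture would not give).

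This restriction forces a compensating strengthening on the LNIC side that your proposal does not supply: the non-degenerate derivative directions produced by LNIC must be taken inside $(d\Psi)(\LieSimple^+)$ rather than all of $\LieN^+$, so that the adjoint vectors fed into NCP land in $\mathfrak{R}_{\varkappa_0}(\LieSimple^+)$ (this is how \cref{pro:FrameFlowLNIC} is combined with \cref{pro:NonConcentrationProperty} in \cref{lem:PartnerPointInZariskiDenseLimitSetForBPBound}). For the Brin--Pesin-type map $\Xi$ to still surject onto $\LieA \oplus \LieM$ with inputs only from $\LieSimple^+$, one needs the identity $[\LieSimple^-, \LieSimple^+] = \LieA \oplus \LieM$, which is strictly stronger than the standard $[\LieN^-, \LieN^+] = \LieA \oplus \LieM$ and is not a formal consequence of the weight decomposition you invoke; the paper proves it (\cref{pro:BracketSpanLieA+LieM}) by sharpening Helgason's argument using the Cartan involution, the Killing form, and the complexified root space decomposition. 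Zariski density of $\Gamma$ enters only to rule out the limit set lying in a proper smooth submanifold; it cannot by itself repair an NCP stated for all of $\LieN^+$. So as written, your plan would break at the Dolgopyat cancellation step for $\mathbb{K} \in \{\C, \mathbb{H}, \mathbb{O}\}$ unless you add the $\LieSimple^+$-restricted NCP, the matching $\LieSimple^+$-based LNIC via the strengthened bracket identity, and the Carnot--Carath\'eodory metric.
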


Fix a right $G$-invariant measure on $\Gamma \backslash G$ induced by some fixed Haar measure on $G$. We denote by $m^{\mathrm{BR}}$ and $m^{\mathrm{BR}_*}$ the unstable and stable Burger--Roblin measures on $\Gamma \backslash G$ respectively, compatible with the choice of the Haar measure. Using Roblin's transverse intersection argument \cite{Rob03,OS13,OW16}, we can derive the following theorem regarding decay of matrix coefficients from \cref{thm:TheoremExponentialMixingOfFrameFlow}.

\begin{theorem}
\label{thm:TheoremDecayOfMatrixCoefficients}
Let $\alpha \in (0, 1]$. There exists $\eta_\alpha > 0$ such that for all $\phi, \psi \in C_{\mathrm{c}}^{0,\alpha}(\Gamma \backslash G)$, there exists $C > 0$ (depending only on $\supp(\phi)$ and $\supp(\psi)$, and independent of $\alpha$) such that for all $t > 0$, we have
\begin{align*}
\left|e^{(D_{\mathbb{K}, n} - \delta_\Gamma)t}\int_{\Gamma \backslash G} \phi(xa_t)\psi(x) \, dx - m^{\mathrm{BR}}(\phi) \cdot m^{\mathrm{BR}_*}(\psi)\right| \leq C e^{-\eta_\alpha t} \|\phi\|_{C^{0,\alpha}} \|\psi\|_{C^{0,\alpha}}
\end{align*}
where $D_{\mathbb{K}, n} := \dim_\R(\mathbb{K})n+\dim_\R(\mathbb{K})-2$ is the volume entropy of $\mathbb{H}_\mathbb{K}^n$. 
\end{theorem}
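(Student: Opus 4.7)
The plan is to deduce the statement from \cref{thm:TheoremExponentialMixingOfFrameFlow} via Roblin's transverse intersection argument as implemented in \cite{OS13, OW16}. The starting point is the local product structure $G = N^{-} A M N^{+}$ around the identity, where $N^{\pm}$ are the horospherical subgroups expanded (respectively contracted) by $\{a_t\}_{t > 0}$. In these coordinates the Haar measure factors as $dx = dn^{-} \, da \, dm \, dn^{+}$, while the Bowen--Margulis--Sullivan measure $\mathsf{m}$ admits a product decomposition whose $N^{\pm}$ factors are (up to smooth cocycle factors) the Patterson--Sullivan conformal densities of dimension $\delta_\Gamma$; the Burger--Roblin measures $m^{\mathrm{BR}}$ and $m^{\mathrm{BR}_*}$ interpolate between these by using Lebesgue measure on $N^{+}$ or $N^{-}$ respectively and the Patterson--Sullivan density on the complementary leaf.

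First I would cover the supports of $\phi$ and $\psi$ by finitely many small flow boxes $U = U_{N^{-}} U_{AM} U_{N^{+}}$ on which the product structure is bi-Lipschitz. Using the $\alpha$-Hölder regularity, approximate $\phi$ on each box by a function $\widetilde\phi$ that is constant in the stable $N^{-}$ direction, with $L^\infty$ error of order $\epsilon^\alpha \|\phi\|_{C^\alpha}$ where $\epsilon$ is the box scale, and dually approximate $\psi$ by $\widetilde\psi$ constant in the unstable $N^{+}$ direction. For these thickened functions, the Haar integral can be compared with the corresponding BMS integral through the explicit Radon--Nikodym derivatives of Lebesgue against Patterson--Sullivan along $N^{\pm}$-leaves, which are smooth bounded densities involving the conformal cocycle on $\Gboundary$.

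Pushing forward by $a_t$ expands $N^{+}$-Lebesgue by the Jacobian $e^{D_{\mathbb{K}, n} t}$ and the Patterson--Sullivan piece by $e^{\delta_\Gamma t}$ via the conformal transformation rule. Consequently, up to bounded multiplicative factors and an additive error of order $\epsilon^\alpha \|\phi\|_{C^\alpha} \|\psi\|_{C^\alpha}$, the Haar matrix coefficient at time $t$ equals $e^{-(D_{\mathbb{K}, n} - \delta_\Gamma)t} m^{\mathrm{BR}}(\phi) \cdot m^{\mathrm{BR}_*}(\psi)$ plus a BMS matrix coefficient to which \cref{thm:TheoremExponentialMixingOfFrameFlow} directly applies, yielding exponential decay of order $e^{-\eta_\alpha t} \|\phi\|_{C^\alpha} \|\psi\|_{C^\alpha}$. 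Multiplying through by $e^{(D_{\mathbb{K}, n} - \delta_\Gamma)t}$ and choosing $\epsilon = e^{-ct}$ for a suitably small $c > 0$ produces the required estimate with a possibly smaller exponential rate.

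The main obstacle is the balancing step: the prefactor $e^{(D_{\mathbb{K}, n} - \delta_\Gamma)t}$ amplifies every error term, so $\epsilon$ must shrink exponentially in $t$ at a carefully chosen rate to keep the Hölder approximation error bounded while preserving a net exponential gain from \cref{thm:TheoremExponentialMixingOfFrameFlow}. Convex cocompactness of $\Gamma$ is used to ensure uniform control on the Patterson--Sullivan density in a fixed neighborhood of the convex core containing $\supp(\phi)$ and $\supp(\psi)$, so that all constants in the local product decomposition are uniformly bounded and the argument reduces to the compactly-supported setting already worked out in \cite{OW16} (building on \cite{Rob03, OS13}); only routine modifications are needed to accommodate the general rank one symmetric space.
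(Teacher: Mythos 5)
Your proposal is correct and follows essentially the same route as the paper, which derives \cref{thm:TheoremDecayOfMatrixCoefficients} from \cref{thm:TheoremExponentialMixingOfFrameFlow} by exactly this Roblin-type transverse intersection argument (local product structure, comparison of Haar/Lebesgue with the Patterson--Sullivan densities along $N^{\pm}$, and an exponentially shrinking thickening scale balanced against the mixing rate), citing \cite{Rob03,OS13,OW16} for the details rather than writing them out. Your sketch, including the Jacobian $e^{D_{\mathbb{K},n}t}$ on $N^+$ versus the conformal factor $e^{\delta_\Gamma t}$ and the choice $\epsilon = e^{-ct}$, matches the intended argument.
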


\subsection{Outline of the proof of \texorpdfstring{\cref{thm:TheoremExponentialMixingOfFrameFlow}}{\autoref{thm:TheoremExponentialMixingOfFrameFlow}}}
The argument in this paper is along similar lines as that of \cite{SW21}. Throughout this paper, we omit proofs which are verbatim repetitions of the ones in \cite{SW21} and focus on the difficulties that arise for general rank one locally symmetric spaces.

Firstly, convex cocompactness of $\Gamma$ implies that the nonwandering set $\supp(\mathsf{m})$ of the geodesic flow, which is Anosov, is compact and hence by the works of Bowen \cite{Bow70,Bow73}, Ratner \cite{Rat73}, and later Pollicott \cite{Pol87}, there exists a Markov section on $\supp(\mathsf{m})$. This allows one to use techniques from thermodynamic formalism. It is now well known that a given flow is exponentially mixing if one obtains appropriate spectral bounds for the associated transfer operators \cite{Pol85}. Viewing the frame flow as an extension of the geodesic flow by the compact group $M$, we are lead to consider the transfer operators with \emph{holonomy} which are twisted by irreducible representations of $M$. More precisely, for a given $\xi = a + ib \in \mathbb C$ and irreducible representation $\rho: M \to \U(V_\rho)$, we consider $\mathcal{M}_{\xi, \rho}: C\bigl(U, V_\rho^{\oplus \dim(\rho)}\bigr) \to C\bigl(U, V_\rho^{\oplus \dim(\rho)}\bigr)$ defined by
\begin{align*}
\mathcal{M}_{\xi, \rho}(H)(u) &= \sum_{u' \in \sigma^{-1}(u)} e^{-(a + \delta_\Gamma - ib)\tau(u')} \rho(\vartheta(u')^{-1}) H(u')
\end{align*}
where $\tau: U \to \mathbb R$ is the first return time map and $\vartheta: U \to M$ is the holonomy. The indispensable framework that is used to prove the required spectral bounds is known as Dolgopyat's method \cite{Dol98}, originally developed for the geodesic flow. The overarching idea is to use some form of a geometric property called the \emph{local non-integrability condition (LNIC)} to infer that the summands of the transfer operators are highly oscillating and thereby produce sufficient cancellations. As the name suggests, LNIC is related to the non-integrability of the stable and unstable foliations of the geodesic flow and consequently the first return time map.

In \cite{SW21}, to handle the frame flow for real hyperbolic manifolds, a more general LNIC was proven which takes into account not only the first return time map but also the holonomy. This is done by introducing a certain map which measures the $AM$-translation when going around a rectangle composed of stable and unstable foliations. Roughly speaking, the general LNIC says that given any direction $\omega \in \LieA \oplus \LieM$, the aforementioned map has a directional derivative close enough to $\omega$.

Due to the combination of the general LNIC and the delicate fractal nature of $\supp(\mathsf{m})$, a new property called the \emph{non-concentration property (NCP)} was also required. Roughly speaking, NCP says that the limit set $\limitset$ does not concentrate along any proper affine subspace: for any $x \in \limitset$, scale $\epsilon$, and direction $w$, one can find another point $y \in \limitset$ of distance $\asymp \epsilon$ such that the direction $y - x$ is close to $w$. If appropriate generalizations of these properties are proven, then they would imply exponential mixing for rank one locally symmetric spaces using Dolgopyat's method described above.

For nonreal hyperbolic manifolds, the key difference is that the associated root spaces do not have one but \emph{two} positive roots. Namely, there is an extra positive root which is double the simple root $\alpha$. As a result, we have the root spaces and horospherical Lie algebras
\begin{align*}
\LieSimple^+ &= \LieG_{-\alpha}, & \LieSimple^- &= \LieG_\alpha, & \LieN^+ &= \LieSimple^+ \oplus [\LieSimple^+, \LieSimple^+], &\LieN^- &= \LieSimple^- \oplus [\LieSimple^-, \LieSimple^-]
\end{align*}
which are mutually distinct. In contrast, $\LieN^+ = \LieSimple^+$ and $\LieN^- = \LieSimple^-$ for real hyperbolic manifolds. Thus, when we generalize the techniques from \cite{SW21} we must carefully choose between $(\LieN^+, \LieN^-)$ and $(\LieSimple^+, \LieSimple^-)$. Unfortunately, it turns out that a general NCP is not true on $\LieN^+$ because the limit set $\limitset$ \emph{does concentrate} along the subspace $\LieSimple^+ \subset \LieN^+$. This phenomenon comes from the expanding and contracting behavior of the root spaces at different exponential rates under the adjoint action of $A = \{a_t\}_{t \in \R}$. However, it suggests that a general NCP can be proven on $\LieSimple^+$ instead. In turn, we require a \emph{stronger form} for the general LNIC which looks only along $\LieSimple^+$ for the directional derivatives. This can be proven as in \cite{SW21} provided that the stronger property $[\LieSimple^+, \LieSimple^-] = \LieA \oplus \LieM$ rather than $[\LieN^+, \LieN^-] = \LieA \oplus \LieM$ holds. These two properties coincide for real hyperbolic manifolds and was proven in \cite{SW21}. Fortunately, we are able to prove the stronger property $[\LieSimple^+, \LieSimple^-] = \LieA \oplus \LieM$ by improving upon an argument due to Helgason \cite{Hel70}. Thus, we obtain the appropriate generalizations of LNIC and NCP.

We then demonstrate how the general LNIC and NCP are used in Dolgopyat's method to obtain the required cancellations. In doing so, we are careful in dealing with technicalities such as ensuring that the Patterson--Sullivan measure has the doubling property by using the Carnot--Carath\'{e}odory metric on $\LieN^+$, which was simply the Euclidean metric on $\mathbb R^{n - 1}$ for real hyperbolic manifolds in \cite{SW21}. This finishes the proof by offloading the remaining arguments to \cite{SW21}.

\subsection{Organization of the paper}
\label{subsec:OrganizationOfThePaper}
We start with covering the necessary background in \cref{sec:Preliminaries}. In \cref{sec:TransferOperatorsWithHolonomy} we introduce the transfer operators with holonomy and make a standard reduction of the main theorem done in Dolgopyat's method. \Cref{sec:LNIC&NCP} is the main section of this paper where we prepare for Dolgopyat's method by covering the key ingredients: LNIC and NCP. In \cref{sec:ConstructionOfDolgopyatOperators,sec:ProofOfFrameFlowDolgopyat}, we construct the Dolgopyat operators and go through Dolgopyat's method to obtain spectral bounds for large frequencies or nontrivial irreducible representations of $M$.

\subsection*{Acknowledgements}
We thank Hee Oh for her helpful suggestions for the manuscript.

\section{Preliminaries}
\label{sec:Preliminaries}
In this section, we cover the necessary background for the paper. We refer the reader to \cite[\S\S\ 2--4]{SW21} for more details for all subsections except \cref{subsec:LieTheory}.

Let us recall the classification of (irreducible) rank one symmetric spaces (of noncompact type). Both \cite[\S\ 19]{Mos73} and \cite{Hel01} are excellent references. There are three families: the real, complex, and quaternionic hyperbolic spaces which we denote by $\mathbb{H}_\R^n$, $\mathbb{H}_\C^n$, and $\mathbb{H}_\mathbb{H}^n$ for $n \geq 2$; and an exceptional one: the octonionic/Cayley hyperbolic plane which we denote by $\mathbb{H}_\mathbb{O}^2$. They are of real dimensions $\dim_\R(\mathbb{K})n$. Throughout the paper, any of the four algebras will be denoted by $\mathbb{K} \in \{\R, \C, \mathbb{H}, \mathbb{O}\}$ and in the last case we require that $n = 2$. Let $G = \Isom^+(\mathbb{H}_\mathbb{K}^n)$ be the group of orientation preserving isometries of $\mathbb{H}_\mathbb{K}^n$, which is then a noncompact connected simple Lie group of rank one. Fix a reference point $o \in \mathbb{H}_\mathbb{K}^n$, a reference tangent vector $v_o \in \T^1(\mathbb{H}_\mathbb{K}^n)$, and define the isotropy subgroups $K = \Stab_G(o)$ and $M = \Stab_G(v_o) < K$. \Cref{tab:RankOneSymmetricSpaces} shows what these groups are explicitly for each $\mathbb{K} \in \{\R, \C, \mathbb{H}, \mathbb{O}\}$. Let $\Gamma < G$ be a Zariski dense torsion-free discrete subgroup. Then our (irreducible) rank one locally symmetric space (of noncompact type) is $X = \Gamma \backslash \mathbb{H}_\mathbb{K}^n \cong \Gamma \backslash G/K$, its unit tangent bundle is $\T^1(X) \cong \Gamma \backslash G/M$, and its holonomy bundle in the sense of Kobayashi is $\Hol(X) \cong \Gamma \backslash G$. The holonomy bundle $\Hol(X)$ can be realized as a subbundle of the (oriented orthonormal) frame bundle $\F(X)$ and they coincide if and only if $\mathbb{K} = \R$. In fact, for $\mathbb{K} \in \{\R,\C,\mathbb H\}$, $\Hol(X)$ can be thought of as the $\mathbb{K}$-frame bundle, where $\mathbb{K}$-frames are modeled on $\mathbb{K}$-frames in $\mathbb{K}^n$ which are bases of mutually $\mathbb{K}$-orthogonal unit vectors $(e_1, e_2, \dotsc, e_n)$. Here, by $\mathbb{K}$-orthogonality, we mean that the column vectors satisfy $e_j^* e_k = \delta_{jk}$ for all $1 \leq j, k \leq n$, where $*$ denotes the $\mathbb{K}$-conjugate transpose with respect to the standard Hermitian form.

\begin{table}
\begin{tabular}{cccc}
\toprule
$\mathbb{K}$ & $G$ & $K$ & $M$ \\
\midrule
$\R$ & $\SO(n, 1)^\circ$ & $\SO(n)$ & $\SO(n - 1)$ \\
$\C$ & $\SU(n, 1)$ & $\Special(\U(n) \times \U(1))$ & $\Special(\U(n - 1) \times \U(1))$ \\
$\mathbb{H}$ & $\Sp(n, 1)$ & $\Sp(n) \times \Sp(1)$ & $\Sp(n - 1) \times \Sp(1)$ \\
$\mathbb{O}$ & $\ExceptionalLieF_4^{-20}$ & $\Spin(9)$ & $\Spin(7)$ \\
\bottomrule
\end{tabular}
\caption{Rank one symmetric spaces of noncompact type.}
\label{tab:RankOneSymmetricSpaces}
\end{table}

\subsection{Lie theory}
\label{subsec:LieTheory}
We again refer the reader to \cite[\S\ 19]{Mos73} and \cite{Hel01} for this subsection. Denote by $\LieG = \T_e(G)$ the Lie algebra of $G$. Let $B: \LieG \times \LieG \to \R$ be the Killing form. Let $\theta: \LieG \to \LieG$ be a Cartan involution, i.e., the symmetric bilinear form $B_\theta: \LieG \times \LieG \to \R$ defined by $B_\theta(x, y) = -B(x, \theta(y))$ for all $x, y \in \LieG$ is positive definite. Let $\LieG = \LieK \oplus \LieP$ be the associated eigenspace decomposition corresponding to the eigenvalues $+1$ and $-1$ respectively. Let $\LieA \subset \LieP$ be a maximal abelian subalgebra and $\Phi \subset \LieA^*$ be its restricted root system. Since $G$ is of rank one, $\dim(\LieA) = 1$. Let $\Phi^\pm \subset \Phi$ be a choice of sets of positive and negative roots. We have the associated restricted root space decomposition
\begin{align*}
\LieG = \LieA \oplus \LieM \oplus \LieN^+ \oplus \LieN^- = \LieA \oplus \LieM \oplus \bigoplus_{\alpha \in \Phi} \LieG_\alpha
\end{align*}
where $\LieM = Z_{\LieK}(\LieA) \subset \LieK$ and $\LieN^\pm = \bigoplus_{\alpha \in \Phi^\mp} \LieG_\alpha$. Let $\alpha \in \Phi^+$ be the simple root for the rest of this subsection. If $\mathbb{K} = \R$, then $\Phi^+ = \{\alpha\}$ and hence $\Phi$ is a reduced root system; and if $\mathbb{K} \in \{\C, \mathbb{H}, \mathbb{O}\}$, then $\Phi^+ = \{\alpha, 2\alpha\}$. We have the following proposition and its immediate corollary (see \cite[\S\ 23]{Mos73}, \cite[\S\ 4]{All99}, and \cite{Kim06}). Here $\overline{x}$ denotes the $\mathbb{K}$-conjugate and $\Im(x) = \frac{1}{2}(x - \overline{x})$ denotes the imaginary component of $x \in \mathbb{K}$.

\begin{proposition}
We have the isomorphisms of graded Lie algebras
\begin{align*}
\LieG_\alpha \oplus \LieG_{2\alpha} \cong \LieG_{-\alpha} \oplus \LieG_{-2\alpha} \cong \mathbb{K}^{n - 1} \oplus \Im\mathbb{K}
\end{align*}
where the latter is endowed with a Lie bracket characterized by:
\begin{enumerate}
\item $[\Im\mathbb{K}, \mathbb{K}^{n - 1} \oplus \Im\mathbb{K}] = 0$,
\item $[x, y] = 2\Im\langle x, y\rangle = 2\Im\bigl(\sum_{j = 1}^{n - 1} x_j \overline{y_j}\bigr) \in \Im\mathbb{K}$ for all $x, y \in \mathbb{K}^{n - 1}$.
\end{enumerate}
\end{proposition}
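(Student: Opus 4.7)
The plan is to derive the second isomorphism first by writing down an explicit realization of $\LieN^- = \LieG_{-\alpha} \oplus \LieG_{-2\alpha}$, and then to obtain the first isomorphism by applying the Cartan involution $\theta$. Since $\theta$ is a Lie algebra automorphism of $\LieG$ that interchanges $\LieG_\beta$ and $\LieG_{-\beta}$ for every restricted root $\beta$, it restricts to a Lie algebra isomorphism $\LieN^+ = \LieG_\alpha \oplus \LieG_{2\alpha} \to \LieN^-$ of graded Lie algebras, so composing with the identification $\LieN^- \cong \mathbb{K}^{n - 1} \oplus \Im\mathbb{K}$ produces the first isomorphism.

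For $\mathbb{K} \in \{\R, \C, \mathbb{H}\}$, I would realize $G$ as the identity component of the isometry group of the Hermitian form of signature $(n, 1)$ on $\mathbb{K}^{n + 1}$ represented, in a basis adapted to an isotropic line, by
\begin{align*}
J = \begin{pmatrix} 0 & 0 & 1 \\ 0 & I_{n - 1} & 0 \\ 1 & 0 & 0 \end{pmatrix}.
\end{align*}
Taking $\LieA = \R H$ with $H = \mathrm{diag}(1, 0, \dotsc, 0, -1)$ and normalizing $\alpha(H) = 1$, the skew-Hermiticity condition $X^* J + J X = 0$ combined with the eigenvalue equations $[H, X] = -X$ and $[H, X] = -2X$ forces $\LieG_{-\alpha}$ to consist of block matrices $X(y, 0)$ with a single free $(2, 1)$-block $y \in \mathbb{K}^{n - 1}$ (the $(3, 2)$-block is then determined to be $-\bar{y}^T$), and $\LieG_{-2\alpha}$ to consist of block matrices $X(0, z)$ with a single free $(3, 1)$-block $z \in \Im\mathbb{K}$. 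A direct block-matrix computation of $[X(y_1, 0), X(y_2, 0)]$ returns $X(0, w)$ with $w = \bar{y}_2^T y_1 - \bar{y}_1^T y_2$, and the identity $\bar{y}_2^T y_1 = \overline{\bar{y}_1^T y_2}$ (which uses associativity of $\mathbb{K}$) rewrites this as $-2\Im\bigl(\sum_i \bar{y}_{1, i} y_{2, i}\bigr)$, yielding property (2) after absorbing a sign into the chosen identification $\LieG_{-2\alpha} \cong \Im\mathbb{K}$. Property (1) is automatic from the grading, since $[\LieG_{-\alpha}, \LieG_{-2\alpha}] \subset \LieG_{-3\alpha} = 0$ and $[\LieG_{-2\alpha}, \LieG_{-2\alpha}] \subset \LieG_{-4\alpha} = 0$.

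The octonionic case $\mathbb{K} = \mathbb{O}$ is the main obstacle, because nonassociativity rules out the naive matrix realization above for $\ExceptionalLieF_4^{-20}$. Here I would instead appeal to the construction of $\LieG$ via the exceptional Jordan algebra $\mathfrak{h}_3(\mathbb{O})$, which exhibits $\LieN^-$ explicitly as the Heisenberg-type Lie algebra on $\mathbb{O} \oplus \Im\mathbb{O}$ with exactly the prescribed bracket; this is carried out in Mostow \cite[Section 19]{Mos73} and Allcock \cite[Section 4]{All99} and recorded in the form of (1) and (2) in Kim \cite{Kim06}. With that case handled, the remaining work is bookkeeping: the factor $2$, the sign, and the precise identifications $\LieG_{\pm\alpha} \cong \mathbb{K}^{n - 1}$ and $\LieG_{\pm 2\alpha} \cong \Im\mathbb{K}$ all depend on choices that must be fixed uniformly across the four families, and I would verify consistency by matching the restricted root multiplicities $\dim \LieG_\alpha = \dim_\R(\mathbb{K})(n - 1)$ and $\dim \LieG_{2\alpha} = \dim_\R(\mathbb{K}) - 1$.
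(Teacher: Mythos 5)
Your argument is sound, but it is worth noting that the paper does not prove this proposition at all: it is recorded as a known fact with pointers to Mostow \cite[Section 23]{Mos73}, Allcock \cite[Section 4]{All99}, and Kim \cite{Kim06}, so your proposal supplies an actual verification where the paper has only a citation. For $\mathbb{K} \in \{\R, \C, \mathbb{H}\}$ your computation is correct: in the isotropic-adapted model the weight $-1$ part is parametrized by the $(2,1)$-block $y$ with the $(3,2)$-block forced to be $-\bar{y}^T$, the weight $-2$ part is an imaginary $(3,1)$-block, the commutator of two weight $-1$ elements lands in the $(3,1)$-block as $\bar{y}_2^T y_1 - \bar{y}_1^T y_2$, and vanishing of $[\LieG_{-\alpha}, \LieG_{-2\alpha}]$ and $[\LieG_{-2\alpha}, \LieG_{-2\alpha}]$ is immediate since $3\alpha$ and $4\alpha$ are not roots; the Cartan involution then transports everything to the opposite side since it is an automorphism interchanging $\LieG_\beta$ and $\LieG_{-\beta}$. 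Two small remarks. First, the identity $\bar{y}_2^T y_1 = \overline{\bar{y}_1^T y_2}$ needs only that conjugation is an additive anti-involution, not associativity; associativity is what makes the matrix model itself available, which is exactly why $\ExceptionalLieF_4^{-20}$ must be handled through the Jordan-algebra construction, and deferring that case to the very references the paper cites is consistent with the paper's own treatment. Second, matching the stated bracket $2\Im\bigl(\sum_j x_j \overline{y_j}\bigr)$ on the nose requires slightly more than absorbing a sign into $\LieG_{-2\alpha} \cong \Im\mathbb{K}$ (for noncommutative $\mathbb{K}$ one should also conjugate entrywise in the identification $\LieG_{-\alpha} \cong \mathbb{K}^{n-1}$, since $\Im(\bar{x}y)$ and $\Im(x\bar{y})$ differ in general); this is harmless because the proposition only asserts the existence of a graded isomorphism, and your multiplicity check $\dim \LieG_\alpha = \dim_\R(\mathbb{K})(n-1)$, $\dim \LieG_{2\alpha} = \dim_\R(\mathbb{K}) - 1$ is the right sanity test. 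Finally, beware that in the paper's conventions $\LieN^+ = \LieG_{-\alpha} \oplus \LieG_{-2\alpha}$ and $\LieN^- = \LieG_{\alpha} \oplus \LieG_{2\alpha}$, the opposite of the labels you used; this does not affect the statement, which is symmetric in the two sides, but it matters if your identifications are to be reused later in the paper.
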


\begin{corollary}
We have $\LieG_{2\alpha} = [\LieG_\alpha, \LieG_\alpha]$ and $\LieG_{-2\alpha} = [\LieG_{-\alpha}, \LieG_{-\alpha}]$.
\end{corollary}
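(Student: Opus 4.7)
The plan is to deduce the corollary directly from the proposition by reducing it to a concrete computation on the model $\mathbb{K}^{n-1} \oplus \Im\mathbb{K}$. Since the proposition gives an isomorphism of \emph{graded} Lie algebras, under this isomorphism $\LieG_\alpha$ corresponds to the summand $\mathbb{K}^{n-1}$ and $\LieG_{2\alpha}$ corresponds to $\Im\mathbb{K}$. Hence proving $\LieG_{2\alpha} = [\LieG_\alpha, \LieG_\alpha]$ is equivalent to proving that $[\mathbb{K}^{n-1}, \mathbb{K}^{n-1}] = \Im\mathbb{K}$ with respect to the explicit bracket in the proposition. The statement for $\LieG_{-2\alpha} = [\LieG_{-\alpha}, \LieG_{-\alpha}]$ is identical after replacing $\alpha$ with $-\alpha$, so it suffices to treat one sign.

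For the inclusion $[\mathbb{K}^{n-1}, \mathbb{K}^{n-1}] \subseteq \Im\mathbb{K}$, I would just invoke property~(2) of the bracket, which says $[x,y] = 2\Im\langle x,y\rangle \in \Im\mathbb{K}$ for all $x,y \in \mathbb{K}^{n-1}$. For the reverse inclusion, given any $q \in \Im\mathbb{K}$, I would produce explicit $x,y \in \mathbb{K}^{n-1}$ with $2\Im\langle x, y\rangle = q$. Let $e_1 \in \mathbb{K}^{n-1}$ be the first standard basis vector (which exists since $n \geq 2$), and set $x = e_1$, $y = -\tfrac{q}{2}\,e_1$. Then $\langle x, y\rangle = \overline{-q/2} = q/2$ since $\overline{q} = -q$ for $q \in \Im\mathbb{K}$, and because $q/2$ is already purely imaginary we have $\Im(q/2) = q/2$, giving $2\Im\langle x,y\rangle = q$ as desired.

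Finally, the case $\mathbb{K} = \R$ is degenerate but consistent: here $\Im\R = \{0\}$, so $\LieG_{\pm 2\alpha} = 0$ and the bracket of any two elements in $\LieG_{\pm \alpha}$ is forced to be zero by property~(2), giving both sides of the claimed equality equal to zero.

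I don't anticipate a real obstacle: the content is entirely packaged into the proposition, and what remains is only the one-line observation that the imaginary-part pairing on $\mathbb{K}^{n-1}$ is surjective onto $\Im\mathbb{K}$ (witnessed by scaling a single basis vector by an arbitrary imaginary quaternion/octonion/complex number). The only point requiring the slightest care is checking the sign conventions for $\Im$ and $\overline{\cdot}$ so that the explicit witness lands on the given $q$.
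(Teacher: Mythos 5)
Your proposal is correct and follows exactly the route the paper intends: the paper treats the corollary as immediate from the graded isomorphism in the proposition, and your explicit witness $x = e_1$, $y = -\tfrac{q}{2}e_1$ (together with the degenerate check for $\mathbb{K} = \R$) is precisely the one-line surjectivity verification left implicit there. No gaps.
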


We denote $\LieSimple^+ = \LieG_{-\alpha}$ and $\LieSimple^- = \LieG_\alpha$. Then for any $\mathbb{K} \in \{\R, \C, \mathbb{H}, \mathbb{O}\}$, we have $\LieN^+ = \LieSimple^+ \oplus [\LieSimple^+, \LieSimple^+]$ and $\LieN^- = \LieSimple^- \oplus [\LieSimple^-, \LieSimple^-]$. If $\mathbb{K} = \R$, then $[\LieSimple^+, \LieSimple^+]$ and $[\LieSimple^-, \LieSimple^-]$ are trivial and hence $\LieN^+$ and $\LieN^-$ are abelian; and if $\mathbb{K} \in \{\C, \mathbb{H}, \mathbb{O}\}$, then $\LieN^+$ and $\LieN^-$ are 2-step nilpotent.

Define the Lie subgroups of $G$ by
\begin{align*}
A &= \exp(\LieA), & N^\pm &= \exp(\LieN^\pm).
\end{align*}
Let $K < G$ be the maximal compact subgroup whose Lie algebra is $\LieK$. Let $M = C_K(A)$ which is then connected (see \cite[Lemma 2.3(1)]{Win15}) and whose Lie algebra is $\LieM$. Note that $C_G(A) = AM$. Define $N = N^-$ and $P = MAN$.

We fix a left $G$-invariant and right $K$-invariant Riemannian metric on $G$ and denote the corresponding inner product and norm on any of its tangent spaces by $\langle \cdot, \cdot \rangle$ and $\|\cdot\|$ respectively. This induces a left $G$-invariant and right $K$-invariant metric $d$ on $G$. We use the same notations for inner products, norms, and metrics induced on any other quotient spaces.

The Killing form $B$ of $\LieG$ is $\Ad_G$-invariant, and hence $\Ad_K$-invariant. Moreover, $B|_{\LieA \times \LieA}$ is positive definite since $\theta$ is a Cartan involution. Thus, $\langle \cdot, \cdot \rangle|_{\LieA \times \LieA} \asymp B|_{\LieA \times \LieA}$. We also assume that the Riemannian metric on $G$ is scaled such that identifying $\LieA \cong \R$ as inner product spaces, we have $\alpha(1) = 1$ for the simple root $\alpha \in \Phi^+$. We obtain a corresponding parametrization $A = \{a_t\}_{t \in \R}$ such that its right translation action corresponds to the geodesic flow on $\Gamma \backslash G/M$ and the frame flow on $\Gamma \backslash G$. Using these conventions and the identity $\Ad \circ \exp = \exp \circ \ad$, we have the useful calculation that
\begin{align*}
\Ad_{a_t}(x_{\LieSimple^\pm} + x_{[\LieSimple^\pm, \LieSimple^\pm]}) = e^{\mp t}x_{\LieSimple^\pm} + e^{\mp2t}x_{[\LieSimple^\pm, \LieSimple^\pm]}
\end{align*}
for all $x_{\LieSimple^\pm} + x_{[\LieSimple^\pm, \LieSimple^\pm]} \in \LieSimple^\pm \oplus [\LieSimple^\pm, \LieSimple^\pm]$ and $t \in \R$.

\subsection{Convex cocompact subgroups}
Let $\partial_\infty\mathbb{H}_\mathbb{K}^n$ and $\overline{\mathbb{H}_\mathbb{K}^n} = \mathbb{H}_\mathbb{K}^n \cup \partial_\infty\mathbb{H}_\mathbb{K}^n$ be the boundary at infinity and the compactification of $\mathbb{H}_\mathbb{K}^n$ respectively. We have the isomorphism $\partial_\infty\mathbb{H}_\mathbb{K}^n \cong G/P$ as $G$-spaces. Let $\limitset = \lim(\Gamma o) \subset \partial_\infty\mathbb{H}_\mathbb{K}^n \subset \overline{\mathbb{H}_\mathbb{K}^n}$ be the limit set of $\Gamma$. We denote its convex hull by $\Hull(\limitset) \subset \mathbb{H}_\mathbb{K}^n$. The critical exponent $\delta_\Gamma$ of $\Gamma$ is the abscissa of convergence of the Poincar\'{e} series $\mathscr{P}_\Gamma(s) = \sum_{\gamma \in \Gamma} e^{-s d(o, \gamma o)}$. It is well known that these are independent of the choice of $o \in \mathbb{H}_\mathbb{K}^n$.

\begin{definition}[Convex cocompact subgroup]
A discrete subgroup $\Gamma < G$ is called a \emph{convex cocompact subgroup} if $\Core(X) := \Gamma \backslash \Hull(\limitset) \subset X$ is compact.
\end{definition}

We assume that $\Gamma$ is convex cocompact in the entire paper.

\subsection{Patterson--Sullivan density}
\label{subsec:Patterson--SullivanDensity}
Let $\beta: \partial_\infty\mathbb{H}_\mathbb{K}^n \times \mathbb{H}_\mathbb{K}^n \times \mathbb{H}_\mathbb{K}^n \to \R$ denote the \emph{Busemann function} defined by $\beta_{\xi}(y, x) = \lim_{t \to \infty} (d(\xi(t), y) - d(\xi(t), x))$ for all $\xi \in \partial_\infty\mathbb{H}_\mathbb{K}^n$ and $x, y \in \mathbb{H}_\mathbb{K}^n$ where $\xi: \mathbb R \to \mathbb{H}_\mathbb{K}^n$ is any geodesic with $\lim_{t \to \infty} \xi(t) = \xi$. For convenience, we allow tangent vector arguments for the Busemann function as well in which case we will use their basepoints in the definition.

Let $\{\mu^{\mathrm{PS}}_x: x \in \mathbb{H}_\mathbb{K}^n\}$ be the \emph{Patterson--Sullivan density} of $\Gamma$ \cite{Pat76,Sul79,CI99}, i.e., the set of finite Borel measures on $\partial_\infty\mathbb{H}_\mathbb{K}^n$ supported on $\limitset$ such that
\begin{enumerate}
\item	$\gamma_*\mu^{\mathrm{PS}}_x = \mu^{\mathrm{PS}}_{\gamma x}$ for all $\gamma \in \Gamma$ and $x \in \mathbb{H}_\mathbb{K}^n$,
\item	$\frac{d\mu^{\mathrm{PS}}_x}{d\mu^{\mathrm{PS}}_y}(\xi) = e^{\delta_\Gamma \beta_{\xi}(y, x)}$ for all $\xi \in \partial_\infty\mathbb{H}_\mathbb{K}^n$ and $x, y \in \mathbb{H}_\mathbb{K}^n$.
\end{enumerate}
Since $\Gamma$ is convex cocompact, for all $x \in \mathbb{H}_\mathbb{K}^n$, the measure $\mu^{\mathrm{PS}}_x$ is the $\delta_\Gamma$-dimensional Hausdorff measure on $\partial_\infty\mathbb{H}_\mathbb{K}^n$ supported on $\limitset$ corresponding to the spherical metric on $\partial_\infty\mathbb{H}_\mathbb{K}^n$ with respect to $x$, up to scalar multiples.

\subsection{Bowen--Margulis--Sullivan measure}
\label{subsec:BMS_Measure}
For all $u \in \T^1(\mathbb{H}_\mathbb{K}^n)$, its forward and backward limit points are $u^\pm = \lim_{t \to \pm\infty} \gamma(t) \in \partial_\infty\mathbb{H}_\mathbb{K}^n$ where $\gamma: \R \to \mathbb{H}_\mathbb{K}^n$ is the geodesic with $\gamma'(0) = u$. Similarly, for all $f = (e_1, e_2, \dotsc, e_{\dim_\R(\mathbb{K})n}) \in \F(\mathbb{H}_\mathbb{K}^n)$, we write $f^\pm = e_1^\pm$. Since $\Hol(\mathbb{H}_\mathbb{K}^n) \subset \F(\mathbb{H}_\mathbb{K}^n)$ is a subbundle, this notation makes sense for all $g \in \Hol(\mathbb{H}_\mathbb{K}^n) \cong G$ as well. Using the isomorphism $\partial_\infty\mathbb{H}_\mathbb{K}^n \cong G/P$ as $G$-spaces, we have the algebraic description $g^+ = gP$ and $g^- = gw_0P$ for all $g \in G$, where $w_0 \in N_K(A)/M \cong \mathbb{Z}/2\mathbb{Z}$ is the nontrivial element of the Weyl group.

Using the Hopf parametrization via the diffeomorphism $G/M \cong \T^1(\mathbb{H}_\mathbb{K}^n) \to \{(u^+, u^-) \in \partial_\infty\mathbb{H}_\mathbb{K}^n \times \partial_\infty\mathbb{H}_\mathbb{K}^n: u^+ \neq u^-\} \times \mathbb R$ defined by $u \mapsto (u^+, u^-, t = \beta_{u^-}(o, u))$, we define the \emph{Bowen--Margulis--Sullivan (BMS) measure} $\mathsf{m}$ on $G/M$ \cite{Mar04,Bow71,Kai90,CI99} by
\begin{align*}
d\mathsf{m}(u) = e^{\delta_\Gamma \beta_{u^+}(o, u)} e^{\delta_\Gamma \beta_{u^-}(o, u)} \, d\mu^{\mathrm{PS}}_o(u^+) \, d\mu^{\mathrm{PS}}_o(u^-) \, dt.
\end{align*}
This definition depends only on $\Gamma$. Moreover, $\mathsf{m}$ is left $\Gamma$-invariant. We now define induced measures on other spaces, all of which we call the BMS measures and denote by $\mathsf{m}$ by abuse of notation. By left $\Gamma$-invariance, $\mathsf{m}$ descends to a measure on $\Gamma \backslash G/M$. We normalize it to a probability measure so that $\mathsf{m}(\Gamma \backslash G/M) = 1$. Since $M$ is compact, we can then use the Haar probability measure on $M$ to lift $\mathsf{m}$ to a right $M$-invariant measure on $\Gamma \backslash G$. The BMS measures are right $A$-invariant and hence invariant under the geodesic and frame flows. Define $\Omega = \supp(\mathsf{m}) \subset \Gamma \backslash G/M$ which is right $A$-invariant and also compact since $\Gamma$ is convex cocompact.

\subsection{Markov section}
\label{subsec:MarkovSections}
Recall that the geodesic flow on $\T^1(X)$ is Anosov because rank one locally symmetric spaces are of pinched negative sectional curvature. By the independent works of Bowen \cite{Bow70} and Ratner \cite{Rat73} which was later generalized by Pollicott \cite{Pol87}, there exists a Markov section for the geodesic flow on $\Omega \subset \T^1(X) \cong \Gamma \backslash G/M$ which we define below.

Let $W^{\mathrm{su}}(w), W^{\mathrm{ss}}(w) \subset \T^1(X)$ and $W^{\mathrm{ss}}(w) \subset \T^1(X)$ denote the leaves through $w \in \T^1(X)$ of the strong unstable and strong stable foliations, and $W_{\epsilon}^{\mathrm{su}}(w) \subset W^{\mathrm{su}}(w)$ and $W_{\epsilon}^{\mathrm{ss}}(w) \subset W^{\mathrm{ss}}(w)$ denote the open balls of radius $\epsilon > 0$ with respect to the induced distance functions $d_{\mathrm{su}}$ and $d_{\mathrm{ss}}$ respectively. We use similar notations for the weak unstable and weak stable foliations by replacing `su' with `wu' and `ss' with `ws' respectively. There exist $\epsilon_0, \epsilon_0' > 0$ such that for all $w \in \T^1(X)$, $u \in W_{\epsilon_0}^{\mathrm{wu}}(w)$, and $s \in W_{\epsilon_0}^{\mathrm{ss}}(w)$, there exists a unique intersection denoted by
\begin{align}
\label{eqn:BracketOfUandS}
[u, s] = W_{\epsilon_0'}^{\mathrm{ss}}(u) \cap W_{\epsilon_0'}^{\mathrm{wu}}(s)
\end{align}
and moreover, $[\cdot, \cdot]$ defines a homeomorphism from $W_{\epsilon_0}^{\mathrm{wu}}(w) \times W_{\epsilon_0}^{\mathrm{ss}}(w)$ onto its image \cite{Rat73}. For any $\hat{\delta} > 0$ and \emph{center} $w \in \Omega$, we call
\begin{align*}
R = [U, S] = \{[u, s] \in \Omega: u \in U, s \in S\} \subset \Omega
\end{align*}
a \emph{rectangle of size $\hat{\delta}$} if $\diam_{d_{\mathrm{su}}}(U), \diam_{d_{\mathrm{ss}}}(S) \leq \hat{\delta}$, and $w \in U \subset W_{\epsilon_0}^{\mathrm{su}}(w) \cap \Omega$ and $w \in S \subset W_{\epsilon_0}^{\mathrm{ss}}(w) \cap \Omega$ are \emph{proper} subsets, i.e., $U = \overline{\interior(U)}$ and $S = \overline{\interior(S)}$ with respect to the topologies of $W^{\mathrm{su}}(w) \cap \Omega$ and $W^{\mathrm{ss}}(w) \cap \Omega$ respectively. For any rectangle $R = [U, S]$, we generalize the notation and define $[v_1, v_2] = [u_1, s_2]$ for all $v_1 = [u_1, s_1] \in R$ and $v_2 = [u_2, s_2] \in R$.

Let $\mathcal{R} = \{R_1 = [U_1, S_1], R_2 = [U_2, S_2], \dotsc, R_N = [U_N, S_N]\}$ for some $N \in \N$ be a set of mutually disjoint rectangles of size $\hat{\delta}$ in $\Omega$. It is said to be \emph{complete} if $\Omega = \bigcup_{j = 1}^N \bigcup_{t \in [0, \hat{\delta}]} R_j a_t$. Denote $R = \bigsqcup_{j = 1}^N R_j$ and $U = \bigsqcup_{j = 1}^N U_j$. The first return time map $\tau: R \to \mathbb R$ and the Poincar\'{e} first return map $\mathcal{P}: R \to R$ are defined by
\begin{align*}
\tau(u) &= \inf\{t > 0: ua_t \in R\}, & \mathcal{P}(u) &= ua_{\tau(u)}
\end{align*}
for all $u \in R$. Let $\sigma = (\proj_U \circ \mathcal{P})|_U: U \to U$ where $\proj_U: R \to U$ is the projection defined by $\proj_U([u, s]) = u$ for all $[u, s] \in R$.

\begin{definition}[Markov section]
The complete set $\mathcal{R}$ is called a \emph{Markov section} if the following \emph{Markov property} is satisfied: $[\interior(U_k), \mathcal{P}(u)] \subset \mathcal{P}([\interior(U_j), u])$ and $\mathcal{P}([u, \interior(S_j)]) \subset [\mathcal{P}(u), \interior(S_k)]$ for all $u \in R$ such that $u \in \interior(R_j) \cap \mathcal{P}^{-1}(\interior(R_k)) \neq \varnothing$ and $1 \leq j, k \leq N$.
\end{definition}

Henceforth, we fix any positive constant
\begin{align}
\label{eqn:DeltaHatCondition}
\hat{\delta} < \min(1, \epsilon_0/2, \epsilon_0'/2),
\end{align}
where $\epsilon_0$ and $\epsilon_0'$ are from \cref{eqn:BracketOfUandS}, and any corresponding Markov section $\mathcal{R}$. We introduce the distance function $d$ on $U$ defined by
\begin{align*}
d(u, v) =
\begin{cases}
d_{\mathrm{su}}(u, v), & u, v \in U_j \text{ for some } 1 \leq j \leq N \\
1, & \text{otherwise}.
\end{cases}
\end{align*}

\subsection{Symbolic dynamics}
\label{subsec:SymbolicDynamics}
Let $\mathcal A = \{1, 2, \dotsc, N\}$ be the \emph{alphabet} for the coding corresponding to the Markov section $\mathcal{R}$ and define
\begin{align*}
\Sigma &= \{(\dotsc, x_{-1}, x_0, x_1, \dotsc) \in \mathcal A^{\mathbb Z}: T_{x_j, x_{j + 1}} = 1 \text{ for all } j \in \mathbb Z\}, \\
\Sigma^+ &= \{(x_0, x_1, \dotsc) \in \mathcal A^{\mathbb Z_{\geq 0}}: T_{x_j, x_{j + 1}} = 1 \text{ for all } j \in \mathbb Z_{\geq 0}\}.
\end{align*}
Sequences in the above spaces and their finite counterparts are said to be \emph{admissible}. For any $\kappa \in (0, 1)$, we can endow $\Sigma$ (resp. $\Sigma^+$) with the distance function $d_\kappa$ defined by $d_\kappa(x, y) = \kappa^{\inf\{|j| \in \mathbb Z_{\geq 0}: x_j \neq y_j\}}$ for all $x, y \in \Sigma$ (resp $x, y \in \Sigma^+$).

\begin{definition}[Cylinder]
For all $k \in \mathbb Z_{\geq 0}$ and for all admissible sequences $x = (x_0, x_1, \dotsc, x_k)$, we define the corresponding \emph{cylinder} to be
\begin{align*}
\mathtt{C}[x] = \{u \in U: \sigma^j(u) \in \interior(U_{x_j}) \text{ for all } 0 \leq j \leq k\}
\end{align*}
with \emph{length} $\len(\mathtt{C}[x]) := \len(x) := k$.
\end{definition}

Although $\sigma$ and $\tau$ are not continuous, in our setting, $\sigma|_{\mathtt{C}[j, k]}$ is bi-Lipschitz and $\tau|_{\mathtt{C}[j, k]}$ is Lipschitz for all admissible pairs $(j, k)$.

Abusing notation, let $\sigma$ denote the shift map on $\Sigma$ or $\Sigma^+$. There are continuous surjections $\zeta: \Sigma \to R$ and $\zeta^+: \Sigma^+ \to U$ defined by $\zeta(x) = \bigcap_{j = -\infty}^\infty \overline{\mathcal{P}^{-j}(\interior(R_{x_j}))}$ for all $x \in \Sigma$ and $\zeta^+(x) = \bigcap_{j = 0}^\infty \overline{\sigma^{-j}(\interior(U_{x_j}))}$ for all $x \in \Sigma^+$. Fix $\kappa \in (0, 1)$ sufficiently close to $1$ such that $\zeta$ and $\zeta^+$ are Lipschitz \cite[Lemma 2.2]{Bow73}. Let $C^{\Lip(d_\kappa)}(\Sigma, \mathbb R)$ denote the space of Lipschitz functions $f: \Sigma \to \mathbb R$ and similarly for domain space $\Sigma^+$ or target space $\mathbb C$. Since $(\tau \circ \zeta)|_{\zeta^{-1}(\proj_U^{-1}(\mathtt{C}))}$ and $(\tau \circ \zeta^+)|_{(\zeta^+)^{-1}(\mathtt{C})}$ are Lipschitz for cylinders $\mathtt{C} \subset U$ of length $1$, they have Lipschitz extensions $\tau_\Sigma: \Sigma \to \mathbb R$ and $\tau_{\Sigma^+}: \Sigma^+ \to \mathbb R$ respectively. Note that $\zeta(\sigma(x)) = \zeta(x)a_{\tau_\Sigma(x)}$ for all $x \in \Sigma$ and $\zeta^+(\sigma(x)) = \proj_U(\zeta^+(x)a_{\tau_{\Sigma^+}(x)})$ for all $x \in \Sigma^+$.

\subsection{Thermodynamics}
\label{subsec:Thermodynamics}
\begin{definition}[Pressure]
\label{def:Pressure}
For all $f \in C^{\Lip(d_\kappa)}(\Sigma, \mathbb R)$, called the \emph{potential}, the \emph{pressure} is defined by
\begin{align*}
\Pr_\sigma(f) = \sup_{\nu \in \mathcal{M}^1_\sigma(\Sigma)}\left\{\int_\Sigma f \, d\nu + h_\nu(\sigma)\right\}
\end{align*}
where $\mathcal{M}^1_\sigma(\Sigma)$ is the set of $\sigma$-invariant Borel probability measures on $\Sigma$ and $h_\nu(\sigma)$ is the measure theoretic entropy of $\sigma$ with respect to $\nu$.
\end{definition}

The supremum in \cref{def:Pressure} is attained by a unique $\nu_f \in \mathcal{M}^1_\sigma(\Sigma)$ \cite[Theorems 2.17 and 2.20]{Bow08}. Define $\nu_\Sigma = \nu_{-\delta_\Gamma\tau_\Sigma} \in \mathcal{M}^1_\sigma(\Sigma)$ whose corresponding pressure is $\Pr_\sigma(-\delta_\Gamma\tau_\Sigma) = 0$. Define $\nu_R = \zeta_*(\nu_\Sigma)$ and $\nu_U = (\proj_U)_*(\nu_R)$. Note that $\nu_U(\tau) = \nu_R(\tau)$. For the relation between $\mathsf{m}$ and $\nu_R$, which we do not require directly in this paper, see \cite[\S\S\ 3 and 4]{SW21}.

\subsection{Holonomy}
Let $w_j \in R_j$ be the center for all $j \in \mathcal{A}$. A smooth section
\begin{align*}
F: \bigsqcup_{j = 1}^N [W_{\epsilon_0}^{\mathrm{su}}(w_j), W_{\epsilon_0}^{\mathrm{ss}}(w_j)] \to \Hol(X) \cong \Gamma \backslash G
\end{align*}
can be constructed with the properties that for all $j \in \mathcal{A}$, and $u, u' \in W_{\epsilon_0}^{\mathrm{su}}(w_j)$, and $s, s' \in W_{\epsilon_0}^{\mathrm{ss}}(w_j)$, we have
\begin{align*}
\lim_{t \to -\infty} d(F(u)a_t, F(u')a_t) &= 0, & \lim_{t \to +\infty} d(F([u, s])a_t, F([u, s'])a_t) &= 0
\end{align*}
or equivalently, there exist unique $n^+ \in N^+$ and $n^- \in N^-$ such that
\begin{align*}
F(u') &= F(u)n^+, & F([u, s']) &= F([u, s])n^-.
\end{align*}
We refer to \cite[\S\ 4]{SW21} for the details; the only difference here being that we generalize $F$ as being $\Hol(X)$-valued rather than $\F(X)$-valued.

\begin{definition}[Holonomy]
The \emph{holonomy} is a map $\vartheta: R \to M$ such that for all $u \in R$, we have $F(\mathcal{P}(u)) = F(u) a_{\tau(u)} \vartheta(u)$.
\end{definition}

Just as in the observation in \cite[Subsection 3.1]{SW21} and \cite[Lemma 4.2]{SW21}, we have the following.

\begin{lemma}
\label{lem:tauAndHolonomyConstantOnStrongStableLeaves}
The maps $\tau$ and $\vartheta$ are constant on $[u, S_j]$ for all $u \in U_j$ and $j \in \mathcal{A}$.
\end{lemma}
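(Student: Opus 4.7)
The plan is to combine the Markov property with the algebraic structure $A, M, N^-$ of $G$, treating $\tau$ first and then bootstrapping to $\vartheta$.

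First I would argue that $\tau$ is constant on $[u, S_j]$. Fix $u \in \interior(U_j)$ with $\mathcal{P}(u) \in \interior(R_k)$ for some $k \in \mathcal{A}$ (boundary points of $U_j$ are handled afterwards by continuity of $\tau$ on each cylinder), and let $v \in [u, \interior(S_j)]$. The definition of the bracket gives $v \in W_{\epsilon_0'}^{\mathrm{ss}}(u)$, and since the $A$-action on $\Gamma \backslash G/M$ preserves the strong stable foliation, $v a_t \in W^{\mathrm{ss}}(u a_t)$ for every $t \in \R$; in particular $\mathcal{P}(v) = v a_{\tau(v)} \in W^{\mathrm{ss}}(u a_{\tau(v)})$. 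On the other hand, the Markov property forces $\mathcal{P}(v) \in [\mathcal{P}(u), \interior(S_k)] \subset W^{\mathrm{ss}}(\mathcal{P}(u))$. Comparing, $u a_{\tau(u)}$ and $u a_{\tau(v)}$ lie on a common strong stable leaf. Since in $\Gamma \backslash G/M$ the local strong stable leaf through a point is an $N^-$-orbit while the flow orbit is an $A$-orbit, and $N^- \cap A = \{e\}$, a single flow line meets any fixed strong stable leaf in at most one point locally; hence $\tau(v) = \tau(u)$.

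Next, with $t_0 := \tau(u) = \tau(v)$ in hand, I would deduce that $\vartheta$ is also constant on $[u, S_j]$. Since $u \in U_j \subset W^{\mathrm{su}}(w_j) \subset W^{\mathrm{wu}}(w_j)$ one has $[u, w_j] = u$, so the defining property of the section $F$ yields some $n^- \in N^-$ with $F(v) = F(u) n^-$; and the Markov property (applied at $\mathcal{P}(u)$, which plays the role of $u$ in the rectangle $R_k$ with center $w_k$) analogously gives $\tilde{n}^- \in N^-$ with $F(\mathcal{P}(v)) = F(\mathcal{P}(u)) \tilde{n}^-$. Substituting these into the two defining relations $F(\mathcal{P}(v)) = F(v) a_{t_0} \vartheta(v)$ and $F(\mathcal{P}(u)) = F(u) a_{t_0} \vartheta(u)$ and cancelling $F(u)$, one arrives at
\begin{align*}
n^- a_{t_0} \vartheta(v) = a_{t_0} \vartheta(u) \tilde{n}^-.
\end{align*}
Using that $A$ normalizes $N^-$ and that $M$ normalizes $N^-$ (both immediate from $\Ad(A)$ and $\Ad(M)$ preserving each restricted root space $\LieG_\alpha$), this rearranges to an identity of the form $\vartheta(v) = r\, \vartheta(u)$ for some $r \in N^-$. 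But $\vartheta(v), \vartheta(u) \in M$, so $r = \vartheta(v) \vartheta(u)^{-1} \in M \cap N^- = \{e\}$, giving $\vartheta(v) = \vartheta(u)$.

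The main obstacle is the algebraic bookkeeping rather than any deep step: one must ensure that the Markov property is only invoked at interior points (so it applies as stated), track the two separate cancellations in the Iwasawa-type decomposition, and justify the identity $[u, w_j] = u$ from $u \in W^{\mathrm{wu}}(w_j)$. The heart of the argument reduces to the disjointness facts $N^- \cap A = \{e\}$ and $N^- \cap M = \{e\}$, both of which are immediate from the restricted root space decomposition of $\LieG$.
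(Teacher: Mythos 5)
Your argument is correct and is essentially the proof the paper relies on: the paper gives no proof of this lemma but defers to \cite{SW21} (Subsection 3.1 and Lemma 4.2), where precisely this combination is used --- flow-invariance of the strong stable foliation plus the Markov property to get $\tau(v)=\tau(u)$, and then the $N^-$-property of the section $F$ together with the facts that $A$ and $M$ normalize $N^-$ and $M \cap N^- = \{e\}$ (indeed $AM \cap N^- = \{e\}$) to get $\vartheta(v)=\vartheta(u)$. The only steps worth making explicit are that the ``a flow line meets a strong stable leaf at most once'' claim and the cancellation of $F(u)$ should be performed after lifting to $G$ (or inside an embedded local product neighborhood), which is exactly where the smallness condition $\hat{\delta} < \min(1, \epsilon_0/2, \epsilon_0'/2)$ and the embeddedness of the local leaves are used.
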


Denote the unitary dual of $M$ by $\widehat{M}$. Denote the trivial irreducible representation by $1 \in \widehat{M}$ and define $\widehat{M}_0 = \widehat{M} \setminus \{1\}$. By the Peter--Weyl theorem, we obtain an orthogonal Hilbert space decomposition
\begin{align*}
L^2(M, \mathbb C) = \operatorname*{\widehat{\bigoplus}}_{\rho \in \widehat{M}} V_\rho^{\oplus \dim(\rho)}.
\end{align*}

For all $b \in \mathbb R$ and $\rho \in \widehat{M}$, we define the tensored unitary representation $\rho_b: AM \to \U(V_\rho)$ by
\begin{align*}
\rho_b(a_tm)(z) = e^{-ibt}\rho(m)(z) \qquad \text{for all $z \in V_\rho$, $t \in \mathbb R$, and $m \in M$}.
\end{align*}
For any representation $\rho: M \to \U(V)$ for some Hilbert space $V$, we denote the differential at $e \in M$ by $d\rho = (d\rho)_e: \LieM \to \mathfrak{u}(V)$, and define the norm
\begin{align*}
\|\rho\| = \sup_{z \in \LieM, \|z\| = 1} \|d\rho(z)\|_{\mathrm{op}}
\end{align*}
and similarly for any unitary representation $\rho: AM \to \U(V)$. We have the following facts (see \cite[Lemmas 4.3 and 4.4]{SW21} for proofs).

\begin{lemma}
\label{lem:LieTheoreticNormBounds}
For all $b \in \mathbb R$ and $\rho \in \widehat{M}$, we have $\max(|b|, \|\rho\|) \leq \|\rho_b\| \leq |b| + \|\rho\|$.
\end{lemma}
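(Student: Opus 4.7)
My plan is to compute the differential $d\rho_b$ explicitly on $\LieA \oplus \LieM$ and then read off the required bounds directly from that formula. The key point is that $A$ and $M$ commute, so the exponential map on $\LieA \oplus \LieM$ factors: if I let $H_0 \in \LieA$ denote the unit vector with $\exp(tH_0) = a_t$, then for any $x = t_A H_0 + x_M \in \LieA \oplus \LieM$ the one-parameter subgroup is $\exp(sx) = a_{st_A}\exp(sx_M)$, which by the definition of $\rho_b$ gives
\begin{align*}
\rho_b(\exp(sx)) = e^{-ibst_A}\,\rho(\exp(sx_M)).
\end{align*}
Differentiating at $s = 0$ yields
\begin{align*}
d\rho_b(x) = -ibt_A \cdot \Id_{V_\rho} + d\rho(x_M).
\end{align*}

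For the upper bound, I take $x$ with $\|x\| = 1$. Since the inner product makes $\LieA$ and $\LieM$ orthogonal (they are distinct restricted root/weight spaces, or more directly by the choice of $\langle\cdot,\cdot\rangle|_{\LieA \times \LieA} \propto B|_{\LieA\times\LieA}$ together with $\LieM \subset \LieK$), one has $t_A^2 + \|x_M\|^2 = 1$, hence $|t_A|, \|x_M\| \leq 1$. The triangle inequality for the operator norm then gives
\begin{align*}
\|d\rho_b(x)\|_\mathrm{op} \leq |b|\,|t_A| + \|d\rho(x_M)\|_\mathrm{op} \leq |b|\,|t_A| + \|x_M\|\,\|\rho\| \leq |b| + \|\rho\|,
\end{align*}
so $\|\rho_b\| \leq |b| + \|\rho\|$.

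For the lower bound, I test the formula on two specific unit vectors. Taking $x = H_0$ forces $x_M = 0$ and gives $d\rho_b(H_0) = -ib \cdot \Id_{V_\rho}$, whose operator norm equals $|b|$. On the other hand, restricting $x$ to range over unit vectors in $\LieM$ gives $d\rho_b(x_M) = d\rho(x_M)$, and taking the supremum recovers $\|\rho\|$. Combining these, $\|\rho_b\| \geq \max(|b|, \|\rho\|)$.

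There is no real obstacle here; everything reduces to the elementary computation of $d\rho_b$ and standard triangle/supremum arguments. The only subtlety worth stating carefully is the $AM$ commutation ensuring the factorization $\exp(sx) = a_{st_A}\exp(sx_M)$, and the orthogonality $\LieA \perp \LieM$ controlling the constraint $|t_A|, \|x_M\| \leq 1$.
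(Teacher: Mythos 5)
Your proposal is correct and is essentially the same argument as the one the paper defers to (\cite[Lemmas 4.3 and 4.4]{SW21}): compute $d\rho_b(t_A H_0 + x_M) = -ibt_A\,\Id_{V_\rho} + d\rho(x_M)$ on $\LieA \oplus \LieM$, then use the triangle inequality for the upper bound and evaluation at $H_0$ and at unit vectors of $\LieM$ for the lower bound. One cosmetic remark: the orthogonality $\LieA \perp \LieM$ is better justified by $\LieK \perp \LieP$ for the chosen $\Ad(K)$-invariant inner product (of $B_\theta$ type) rather than by the proportionality $\langle \cdot,\cdot\rangle|_{\LieA \times \LieA} \propto B|_{\LieA \times \LieA}$, which by itself says nothing about how $\LieA$ sits relative to $\LieM$.
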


Dolgopyat's method is concerning the regime when $\|\rho_b\|$ is sufficiently large. This occurs precisely when $|b|$ is sufficiently large or $\rho \in \widehat{M}$ is nontrival. Thus, we define
\begin{align*}
\widehat{M}_0(b_0) = \{(b, \rho) \in \mathbb R \times \widehat{M}: |b| > b_0 \text{ or } \rho \neq 1\}
\end{align*}
where we fix $b_0 > 0$ later. Fix $\delta_{\varrho} = \inf_{b \in \mathbb R, \rho \in \widehat{M}_0} \|\rho_b\| = \inf_{\rho \in \widehat{M}_0} \|\rho\|$ which is positive because $M$ is a compact connected Lie group. Also fix $\delta_{1, \varrho} = \min(1, \delta_{\varrho})$.

\begin{lemma}
\label{lem:maActionLowerBound}
There exists $\varepsilon_1 > 0$ such that for all $b \in \mathbb R$, $\rho \in \widehat{M}$, and $\omega \in V_\rho^{\oplus \dim(\rho)}$ with $\|\omega\|_2 = 1$, there exists $z \in \LieA \oplus \LieM$ with $\|z\| = 1$ such that $\|d\rho_b(z)(\omega)\|_2 \geq \varepsilon_1 \|\rho_b\|$.
\end{lemma}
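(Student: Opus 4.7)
The plan is to handle the $\LieA$- and $\LieM$-directions separately and then pick whichever gives the larger lower bound for the given $(\omega,b,\rho)$. The $\LieA$-direction trivially produces a factor of $|b|$, while the $\LieM$-direction should produce a factor comparable to $\|\rho\|$ via a Casimir-type identity. Combining the two via $\max(|b|,\|\rho\|) \geq (|b|+\|\rho\|)/2 \geq \|\rho_b\|/2$, the latter from \cref{lem:LieTheoreticNormBounds}, should give a uniform lower bound of the form $\varepsilon_1 \|\rho_b\|$.

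For the $\LieM$-side, I would fix an orthonormal basis $\{z_1,\dotsc,z_d\}$ of $\LieM$ (with $d = \dim\LieM$) with respect to $\langle\cdot,\cdot\rangle$, and consider $\Omega_\LieM = \sum_i z_i^2$ in the universal enveloping algebra. Since the inner product is $\Ad(K)$-invariant, hence $\Ad(M)$-invariant, $\Omega_\LieM$ is central, so by Schur's lemma $d\rho(\Omega_\LieM) = -\mu_\rho \cdot \Id_{V_\rho}$ for some scalar $\mu_\rho \in \R$. Because $\rho$ is unitary, each $d\rho(z_i)$ is skew-Hermitian, so $-d\rho(z_i)^2 = d\rho(z_i)^*d\rho(z_i)$ is positive semidefinite, giving $\mu_\rho \geq 0$ and the identity $\sum_i \|d\rho(z_i)w\|^2 = \mu_\rho \|w\|^2$ for every $w \in V_\rho$. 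Writing any unit $z = \sum c_i z_i \in \LieM$ and applying Cauchy--Schwarz, $\|d\rho(z)w\|^2 \leq (\sum c_i^2)\sum_i \|d\rho(z_i)w\|^2 = \mu_\rho \|w\|^2$; taking suprema yields the crucial comparison $\|\rho\|^2 \leq \mu_\rho$.

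With this in hand, for $\omega = (\omega_1,\dotsc,\omega_{\dim\rho}) \in V_\rho^{\oplus\dim\rho}$ with $\|\omega\|_2 = 1$ and $\rho_b$ acting diagonally on each slot, summing over the slots gives $\sum_{i=1}^d \|d\rho_b(z_i)(\omega)\|_2^2 = \sum_k \mu_\rho \|\omega_k\|^2 = \mu_\rho \geq \|\rho\|^2$, so some $z_i$ realizes $\|d\rho_b(z_i)(\omega)\|_2 \geq \|\rho\|/\sqrt{d}$. On the $\LieA$-side, for the unit $v \in \LieA$ one has $d\rho_b(v) = -ib\cdot\Id_{V_\rho}$, hence $\|d\rho_b(v)(\omega)\|_2 = |b|$. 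Picking the better of $v$ or the best $z_i$, and invoking \cref{lem:LieTheoreticNormBounds}, yields $\|d\rho_b(z)(\omega)\|_2 \geq \|\rho_b\|/(2\sqrt{d})$, so $\varepsilon_1 = 1/(2\sqrt{d})$ works. The main point I expect to need care is the uniform-in-$\rho$ comparison $\|\rho\|^2 \leq \mu_\rho$; the rest is bookkeeping once Schur and Cauchy--Schwarz have been combined with the skew-Hermiticity of $d\rho$.
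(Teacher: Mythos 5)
Your proof is correct: $\Ad(M)$-invariance of $\langle\cdot,\cdot\rangle|_{\LieM\times\LieM}$ makes $\sum_i d\rho(z_i)^2$ commute with the irreducible $M$-action ($M$ is connected), Schur's lemma and skew-Hermiticity give $\sum_i\|d\rho(z_i)w\|_2^2=\mu_\rho\|w\|_2^2$ with $\mu_\rho\geq 0$, your Cauchy--Schwarz step correctly yields the uniform comparison $\|\rho\|^2\leq\mu_\rho$, and pigeonholing over the basis, comparing with the $\LieA$-direction (where $d\rho_b(v)=-ib\,\Id$ under the paper's normalization of $\LieA$), and invoking \cref{lem:LieTheoreticNormBounds} gives $\varepsilon_1=\frac{1}{2\sqrt{\dim\LieM}}$. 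The paper does not prove this lemma itself but defers to \cite[Lemmas 4.3 and 4.4]{SW21}, and your case analysis of $|b|$ versus $\|\rho\|$ combined with the Casimir/Schur averaging over an orthonormal basis of $\LieM$ is a correct self-contained substitute for that citation, along the same lines. The only cosmetic point is the degenerate case $\LieM=\{0\}$ (which occurs only for $\mathbb{H}_\R^2$), where $\widehat{M}=\{1\}$, $\|\rho\|=0$, and the $\LieA$-direction alone suffices, so one should state the constant as, say, $\varepsilon_1=\min\bigl(\tfrac{1}{2},\tfrac{1}{2\sqrt{\dim\LieM}}\bigr)$ with the obvious convention.
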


\subsection{Extending to smooth maps}
\label{subsec:ExtendingToSmoothMaps}
We need to use the smooth structure on $G$ to apply Lie theoretic arguments later on. Thanks to \cite{Rue89} (using an adapted metric), there exist open neighborhoods $\tilde{U}_j \supset U_j$ with $\overline{\tilde{U}_j} \subset W_{\epsilon_0}^{\mathrm{su}}(w_j)$ and $\diam_{d_{\mathrm{su}}}(\tilde{U}_j) \le \hat{\delta}$ such that for all admissible pairs $(j, k)$, there exists a canonical extension of $(\sigma|_{\mathtt{C}[j, k]})^{-1}: \interior(U_k) \to \mathtt{C}[j, k]$ to a map $\sigma^{-(j, k)}: \tilde{U}_k \to \tilde{U}_j$ which is a diffeomorphism onto its image. Define $\tilde{R}_j = [\tilde{U}_j, S_j]$ for all $j \in \mathcal{A}$. Define $\tilde{R} = \bigsqcup_{j = 1}^N \tilde{R}_j$, $\tilde{U} = \bigsqcup_{j = 1}^N \tilde{U}_j$ and the measure $\nu_{\tilde{U}}$ on $\tilde{U}$ supported on $U$ with $\nu_{\tilde{U}}\bigr|_U = \nu_U$. Let $j \in \mathbb Z_{\geq 0}$ and $\alpha = (\alpha_0, \alpha_1, \dotsc, \alpha_j)$ be an admissible sequence. Define $\sigma^{-\alpha} = \sigma^{-(\alpha_0, \alpha_1)} \circ \sigma^{-(\alpha_1, \alpha_2)} \circ \cdots \circ \sigma^{-(\alpha_{j - 1}, \alpha_j)}$ if $j > 0$ and $\sigma^{-\alpha} = \Id_{\tilde{U}_{\alpha_0}}$ if $j = 0$. Define the cylinder $\tilde{\mathtt{C}}[\alpha] = \sigma^{-\alpha}(\tilde{U}_{\alpha_j}) \supset \mathtt{C}[\alpha]$. Define the smooth maps $\sigma^\alpha = (\sigma^{-\alpha})^{-1}: \tilde{\mathtt{C}}[\alpha] \to \tilde{U}_{\alpha_j}$.

For all admissible pairs $(j, k)$, the maps $\tau|_{\mathtt{C}[j, k]}$ and $\vartheta|_{\mathtt{C}[j, k]}$ naturally extend to smooth maps $\tau_{(j, k)}: \tilde{\mathtt{C}}[j, k] \to \mathbb R$ and $\vartheta^{(j, k)}: \tilde{\mathtt{C}}[j, k] \to M$. Now, for all $k \in \mathbb N$ and admissible sequences $\alpha = (\alpha_0, \alpha_1, \dotsc, \alpha_k)$, we define the smooth maps $\tau_\alpha: \tilde{\mathtt{C}}[\alpha] \to \mathbb R$, $\vartheta^\alpha: \tilde{\mathtt{C}}[\alpha] \to M$, and $\Phi^\alpha: \tilde{\mathtt{C}}[\alpha] \to AM$ by
\begin{align*}
\tau_\alpha(u) &= \sum_{j = 0}^{k - 1} \tau_{(\alpha_j, \alpha_{j + 1})}(\sigma^{(\alpha_0, \alpha_1, \dotsc, \alpha_j)}(u)), \\
\vartheta^\alpha(u) &= \prod_{j = 0}^{k - 1} \vartheta^{(\alpha_j, \alpha_{j + 1})}(\sigma^{(\alpha_0, \alpha_1, \dotsc, \alpha_j)}(u)), \\
\Phi^\alpha(u) &= a_{\tau_\alpha(u)}\vartheta^\alpha(u) = \prod_{j = 0}^{k - 1} \Phi^{(\alpha_j, \alpha_{j + 1})}(\sigma^{(\alpha_0, \alpha_1, \dotsc, \alpha_j)}(u))
\end{align*}
for all $u \in \tilde{\mathtt{C}}[\alpha]$, where the terms of the products are to be in \emph{ascending} order from left to right. For all admissible sequences $\alpha$ with $\len(\alpha) = 0$, we define $\tau_\alpha(u) = 0$ and $\vartheta^\alpha(u) = \Phi^\alpha(u) = e \in AM$ for all $u \in \tilde{\mathtt{C}}[\alpha]$.

\section{Transfer operators with holonomy, their spectral bounds, and the proof of \texorpdfstring{\cref{thm:TheoremExponentialMixingOfFrameFlow}}{\autoref{thm:TheoremExponentialMixingOfFrameFlow}}}
\label{sec:TransferOperatorsWithHolonomy}
\subsection{Transfer operators}
\label{subsec:TransferOperators}
We will use the notation $\xi = a + ib \in \mathbb C$ for the complex parameter for the transfer operators and use the convention that sums over sequences are actually sums over \emph{admissible} sequences, throughout the paper.

\begin{definition}[Transfer operator with holonomy]
For all $\xi \in \mathbb C$ and $\rho \in \widehat{M}$, the \emph{transfer operator with holonomy} $\tilde{\mathcal{M}}_{\xi\tau, \rho}: C\bigl(\tilde{U}, V_\rho^{\oplus \dim(\rho)}\bigr) \to C\bigl(\tilde{U}, V_\rho^{\oplus \dim(\rho)}\bigr)$ is defined by
\begin{align*}
\tilde{\mathcal{M}}_{\xi\tau, \rho}(H)(u) = \sum_{\substack{(j, k)\\ u' = \sigma^{-(j, k)}(u)}} e^{\xi\tau_{(j, k)}(u')} \rho(\vartheta^{(j, k)}(u')^{-1}) H(u')
\end{align*}
for all $u \in \tilde{U}$ and $H \in C\bigl(\tilde{U}, V_\rho^{\oplus \dim(\rho)}\bigr)$.
\end{definition}

Let $\xi \in \mathbb C$ and $\rho \in \widehat{M}$. We define $\mathcal{M}_{\xi\tau, \rho}: C\bigl(U, V_\rho^{\oplus \dim(\rho)}\bigr) \to C\bigl(U, V_\rho^{\oplus \dim(\rho)}\bigr)$ in a similar fashion. We simply call $\tilde{\mathcal{L}}_{\xi\tau} := \tilde{\mathcal{M}}_{\xi\tau, 1}$ and $\mathcal{L}_{\xi\tau} := \mathcal{M}_{\xi\tau, 1}$ \emph{transfer operators}.

Let $a \in \mathbb R$. Recalling the Ruelle--Perron--Frobenius (RPF) theorem and the theory of Gibbs measures (see \cite{Bow08,PP90}), there exist a unique positive function $h_a \in C^{\Lip(d)}(U, \mathbb R)$ and a measure $\nu_a$ on $U$ such that $\nu_a(h_a) = 1$ and
\begin{align*}
\mathcal{L}_{-(\delta_\Gamma + a)\tau}(h_a) &= \lambda_a h_a, & \mathcal{L}_{-(\delta_\Gamma + a)\tau}^*(\nu_a) &= \lambda_a \nu_a
\end{align*}
where $\lambda_a := e^{\Pr_\sigma(-(\delta_\Gamma + a)\tau_{\Sigma})}$ is the maximal simple eigenvalue of $\mathcal{L}_{-(\delta_\Gamma + a)\tau}$ and the rest of the spectrum of $\mathcal{L}_{-(\delta_\Gamma + a)\tau}|_{C^{\Lip(d)}(U, \mathbb C)}$ is contained in a disk of radius strictly less than $\lambda_a$. Moreover, $d\nu_U = h_0 \, d\nu_0$ and $\lambda_0 = 1$ (see \cref{subsec:Thermodynamics}). By \cite[Theorem A.2]{SW21}, which also holds in our setting, the eigenvector $h_a \in C^{\Lip(d)}(U, \mathbb R)$ extends to an eigenvector $h_a \in C^\infty(\tilde{U}, \mathbb R)$ for $\tilde{\mathcal{L}}_{-(\delta_\Gamma + a)\tau}$ with bounded derivatives.

In light of the above, it is convenient to normalize the transfer operators with holonomy. Let $a \in \mathbb R$. For all admissible pairs $(j, k)$, we define the smooth map
\begin{align*}
f_{(j, k)}^{(a)} = - \log(\lambda_a) - (\delta_\Gamma + a)\tau_{(j, k)} + \log \circ h_0 - \log \circ h_0 \circ \sigma^{(j, k)}.
\end{align*}
For all $k \in \mathbb Z_{\geq 0}$ and admissible sequences $\alpha = (\alpha_0, \alpha_1, \dotsc, \alpha_k)$, we define the smooth map $f_\alpha^{(a)}: \tilde{\mathtt{C}}[\alpha] \to \mathbb R$ similar to $\tau_\alpha$ in \cref{subsec:ExtendingToSmoothMaps}. Let $\xi \in \mathbb C$ and $\rho \in \widehat{M}$. We define $\tilde{\mathcal{M}}_{\xi, \rho}: C\bigl(\tilde{U}, V_\rho^{\oplus \dim(\rho)}\bigr) \to C\bigl(\tilde{U}, V_\rho^{\oplus \dim(\rho)}\bigr)$ by
\begin{align*}
\tilde{\mathcal{M}}_{\xi, \rho}(H)(u) = \sum_{\substack{(j, k)\\ u' = \sigma^{-(j, k)}(u)}} e^{(f_{(j, k)}^{(a)} + ib\tau_{(j, k)})(u')} \rho(\vartheta^{(j, k)}(u')^{-1}) H(u')
\end{align*}
and for all $k \in \mathbb N$, its $k$\textsuperscript{th} iteration is
\begin{align}
\label{eqn:k^thIterationOfCongruenceTransferOperatorOfType_rho}
\tilde{\mathcal{M}}_{\xi, \rho}^k(H)(u) = \sum_{\substack{\alpha: \len(\alpha) = k\\ u' = \sigma^{-\alpha}(u)}} e^{f_\alpha^{(a)}(u')} \rho_b(\Phi^\alpha(u')^{-1}) H(u')
\end{align}
for all $u \in \tilde{U}$ and $H \in C\bigl(\tilde{U}, V_\rho^{\oplus \dim(\rho)}\bigr)$. Again, we define $\mathcal{M}_{\xi, \rho}: C\bigl(U, V_\rho^{\oplus \dim(\rho)}\bigr) \to C\bigl(U, V_\rho^{\oplus \dim(\rho)}\bigr)$ in a similar fashion and denote $\tilde{\mathcal{L}}_\xi := \tilde{\mathcal{M}}_{\xi, 1}$ and $\mathcal{L}_\xi := \mathcal{M}_{\xi, 1}$. With this normalization, for all $a \in \mathbb R$, the maximal simple eigenvalue of $\mathcal{L}_a$ is $1$ with eigenvector $\frac{h_a}{h_0}$. Moreover, we have $\mathcal{L}_0^*(\nu_U) = \nu_U$.

We fix some related constants. By perturbation theory of operators as in \cite[Chapter 7]{Kat95} and \cite[Proposition 4.6]{PP90}, we can fix $a_0' > 0$ such that the map $[-a_0', a_0'] \to \mathbb R$ defined by $a \mapsto \lambda_a$ and the map $[-a_0', a_0'] \to C(\tilde{U}, \mathbb R)$ defined by $a \mapsto h_a$ are Lipschitz. We then fix $A_f > 0$ such that $\bigl|f_{(j, k)}^{(a)}(u) - f_{(j, k)}^{(0)}(u)\bigr| \leq A_f|a|$ for all admissible pairs $(j, k)$, $u \in \tilde{\mathtt{C}}[j, k]$, and $|a| \leq a_0'$. Fix $\overline{\tau} = \max_{(j, k)} \sup_{u \in \tilde{\mathtt{C}}[j, k]} \tau_{(j, k)}(u)$ and $\underline{\tau} = \min_{(j, k)} \inf_{u \in \tilde{\mathtt{C}}[j, k]} \tau_{(j, k)}(u)$. Fix
\begin{align*}
T_0 >{}&\max\bigg(\max_{(j, k)} \|\tau_{(j, k)}\|_{C^1}, \max_{(j, k)} \sup_{|a| \leq a_0'} \bigl\|f_{(j, k)}^{(a)}\bigr\|_{C^1}, \max_{(j, k)} \bigl\|\vartheta^{(j, k)}\bigr\|_{C^1}\bigg)
\end{align*}
which is possible by \cite[Lemma 4.1]{PS16}.

\subsection{Spectral bounds with holonomy}
\label{subsec:SpectralBoundsWithHolonomy}
We introduce some norms and seminorms. Let $\rho \in \widehat{M}$, and $H \in C\bigl(\tilde{U}, V_\rho^{\oplus \dim(\rho)}\bigr)$. Define $\|H\| \in C(\tilde{U}, \mathbb R)$ by $\|H\|(u) = \|H(u)\|_2$ for all $u \in \tilde{U}$, and if $\rho = 1$, we use the notation $|H| \in C(\tilde{U}, \mathbb R)$ instead. Define $\|H\|_\infty = \sup \|H\|$ and the $C^1$ seminorm and a modified $C^1$ norm by
\begin{align*}
|H|_{C^1} &= \sup_{u \in \tilde{U}}\|(dH)_u\|_{\mathrm{op}}, & \|H\|_{1, b} &= \|H\|_\infty + \frac{1}{\max(1, |b|)} |H|_{C^1}
\end{align*}
respectively. The usual $C^1$ norm is then $\|H\|_{C^1} = \|H\|_{1, 1}$. Define the Banach space
\begin{align*}
\mathcal{V}_\rho(\tilde{U}) = C^1\bigl(\tilde{U}, V_\rho^{\oplus \dim(\rho)}\bigr)
\end{align*}
of $C^1$ functions whose $C^1$ norm is \emph{bounded}. The following theorem gives spectral bounds of transfer operators with holonomy.

\begin{theorem}
\label{thm:TheoremFrameFlow}
There exist $\eta > 0$, $C > 0$, $a_0 > 0$, and $b_0 > 0$ such that for all $|a| < a_0$, $(b, \rho) \in \widehat{M}_0(b_0)$, $k \in \mathbb N$, and $H \in \mathcal{V}_\rho(\tilde{U})$, we have
\begin{align*}
\big\|\tilde{\mathcal{M}}_{\xi, \rho}^k(H)\big\|_2 \leq Ce^{-\eta k} \|H\|_{1, \|\rho_b\|}.
\end{align*}
\end{theorem}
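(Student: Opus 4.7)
The plan is to carry out Dolgopyat's method in the frame flow setting, following the architecture of \cite{SW21} but with the generalized LNIC and NCP prepared in \cref{sec:LNIC&NCP}, which live in $\LieSimple^+$ rather than $\LieN^+$. The normalization of $\tilde{\mathcal{M}}_{\xi,\rho}$ together with the Ruelle--Perron--Frobenius data $\bigl(1, h_a/h_0, \nu_a\bigr)$ of $\mathcal{L}_a$ provides the measure and cone framework. A standard differentiation of \cref{eqn:k^thIterationOfCongruenceTransferOperatorOfType_rho}, combined with the uniform contraction of the inverse branches $\sigma^{-\alpha}$ on cylinders, gives a Lasota--Yorke--type inequality
\begin{align*}
\bigl|\tilde{\mathcal{M}}_{\xi,\rho}^k(H)\bigr|_{C^1} &\le A\Bigl(\|\rho_b\|\,\bigl\|\tilde{\mathcal{M}}_{\xi,\rho}^k(H)\bigr\|_\infty + e^{-\epsilon k}\,\|H\|_{C^1}\Bigr)
\end{align*}
with constants $A,\epsilon>0$ depending only on the Markov section data. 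This lets us convert any pointwise or $L^2$ contraction on iterates into the bound with the frequency-adapted norm $\|H\|_{1,\|\rho_b\|}$, and lets us restrict attention to a cone of $H$ satisfying $|H|_{C^1}\le B\|\rho_b\|\,\|H\|_\infty$.

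First I would fix an iteration length $N_0=\lceil C_0\log\max(\|\rho_b\|,2)\rceil$ for a large constant $C_0$, so that each cylinder $\tilde{\mathtt{C}}[\alpha]$ with $\len(\alpha)=N_0$ has diameter of order $\|\rho_b\|^{-C_0}$, the scale at which the twist $\rho_b(\Phi^\alpha(\cdot)^{-1})$ varies by an $O(1)$ amount. For each such $\alpha$, I would apply the general LNIC in $\LieSimple^+$ together with \cref{lem:maActionLowerBound} to single out two admissible sub-cylinders $\tilde{\mathtt{C}}[\alpha'],\tilde{\mathtt{C}}[\alpha'']$ of length $N_0+\ell_0$ on which the phases $\rho_b(\Phi^{\alpha'}(\cdot)^{-1})$ and $\rho_b(\Phi^{\alpha''}(\cdot)^{-1})$ are quantitatively misaligned; the $\LieA\oplus\LieM$-content of the commutator $[\LieSimple^+,\LieSimple^-]$ is what lifts a separation in $\LieSimple^+$ directions to a separation in the $\rho_b$-direction. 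Then NCP in $\LieSimple^+$, together with the doubling property of $\mu^{\mathrm{PS}}$ with respect to the Carnot--Carath\'{e}odory metric on $N^+$, yields lower bounds of the form $\nu_U(\tilde{\mathtt{C}}[\alpha'])\gtrsim \nu_U(\tilde{\mathtt{C}}[\alpha])$, and similarly for $\alpha''$. Packaged into index sets $J$, these produce the Dolgopyat operators
\begin{align*}
\mathcal{N}_{a,J}(h) &= \tilde{\mathcal{L}}_a^{N_0}\bigl((1-\mu\chi_J)\,h\bigr)
\end{align*}
with a small dampening constant $\mu>0$.

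The analytic core is to establish, for every $H\in\mathcal{V}_\rho(\tilde{U})$ lying in the cone relative to some positive Lipschitz $h$, the existence of $J=J(H,h)$ with the pointwise majorization $\bigl\|\tilde{\mathcal{M}}_{\xi,\rho}^{N_0}(H)\bigr\|\le\mathcal{N}_{a,J}(h)$ on $U$; this is the step where the LNIC-driven phase separation is actually cashed in. A uniform $L^2$ contraction $\int_U \mathcal{N}_{a,J}(h)^2\,d\nu_U\le(1-\mu')\int_U h^2\,d\nu_U$, with $\mu'>0$ independent of $a,b,\rho,H,h$, then follows from $\mathcal{L}_0^*\nu_U=\nu_U$ together with the $\nu_U$-mass lower bound from NCP/doubling. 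Iterating the pair (majorization + contraction) $m$ times gives $\bigl\|\tilde{\mathcal{M}}_{\xi,\rho}^{mN_0}(H)\bigr\|_2\le(1-\mu')^{m/2}C'\|H\|_{1,\|\rho_b\|}$, and the Lasota--Yorke inequality above absorbs the residual $k\bmod N_0$ iterates and redistributes the $N_0$-dependence to yield the claimed bound with a uniform $\eta>0$ and $C>0$, provided $|a|<a_0$ and $(b,\rho)\in\widehat{M}_0(b_0)$ with $a_0,b_0$ small/large enough for the perturbation arguments of \cref{subsec:TransferOperators} to apply.

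The hard part will be the pointwise majorization, because we must match two structures that were unified in the real hyperbolic case: on the dynamical/Lie side we need the $\LieSimple^+$-directions (where the general LNIC and NCP are proved) so as to lift the phase non-triviality to $\rho_b$ via the $\LieA\oplus\LieM$-component of $[\LieSimple^+,\LieSimple^-]$; while on the measure side we need the Carnot--Carath\'{e}odory metric on the full $N^+$ to obtain the doubling property of $\mu^{\mathrm{PS}}$, since in the nonreal cases the homogeneous dimension of $N^+$ differs from its topological dimension and only the CC metric sees the correct $\delta_\Gamma$-dimensional Hausdorff structure. Reconciling these — quantifying how a perturbation in $\LieSimple^+$ produces a misalignment of $\rho_b$-vectors that survives a CC-ball averaging of the dampening $\chi_J$ — is precisely the rank one technical content of \cref{sec:ConstructionOfDolgopyatOperators,sec:ProofOfFrameFlowDolgopyat}, and is where essentially all the new work beyond \cite{SW21} is concentrated.
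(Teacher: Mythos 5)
Your overall architecture is the right one and matches the paper's: reduce \cref{thm:TheoremFrameFlow} to the existence of Dolgopyat operators with cone preservation, uniform $L^2$ contraction, and pointwise majorization (this is exactly \cref{thm:FrameFlowDolgopyat}), with the cancellation sourced from LNIC in $\LieSimple^+$, NCP, \cref{lem:maActionLowerBound}, and the doubling of the Patterson--Sullivan measure for the Carnot--Carath\'{e}odory metric. However, there is a genuine quantitative gap in your choice of block length $N_0=\lceil C_0\log\max(\|\rho_b\|,2)\rceil$. With a per-block $L^2$ contraction factor $1-\mu'$ that is independent of the frequency, iterating $k/N_0$ blocks only yields $\bigl\|\tilde{\mathcal{M}}_{\xi,\rho}^{k}(H)\bigr\|_2\lesssim (1-\mu')^{k/(2N_0)}\|H\|_{1,\|\rho_b\|}$, i.e.\ an exponential rate of order $1/\log\|\rho_b\|$ which degenerates as $\|\rho_b\|\to\infty$; this is not the uniform $\eta$ demanded by the theorem, and the Lasota--Yorke inequality cannot ``redistribute the $N_0$-dependence'' --- it controls the $C^1$ seminorm in terms of sup norms but does not improve the contraction rate per iterate. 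The paper avoids this by fixing the block length $m=m_1+m_2$ once and for all and pushing \emph{all} frequency dependence into the localization scale: the Vitali cover by Carnot--Carath\'{e}odory balls $W^{\mathrm{su},\mathrm{CC}}_{\epsilon_1/\|\rho_b\|}$, bump functions with $|\cdot|_{C^1}\le 4\|\rho_b\|/\epsilon_2$, and the cone $K_{E\|\rho_b\|}(\tilde{U})$, which guarantees that $H$ and $h$ are essentially constant at scale $1/\|\rho_b\|$ so that the cancellation of \cref{lem:PartnerPointInZariskiDenseLimitSetForBPBound,lem:chiLessThan1} is achieved within a bounded number of steps; each block then contracts $\|\cdot\|_2$ by a fixed factor $\eta<1$, giving a rate per unit time uniform in $(b,\rho)$.

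Two further points would also need repair even for a log-loss version of your scheme. First, localizing on cylinders of symbolic depth $N_0$ only pins their diameters inside the window $[c_0\kappa_1^{-N_0},c_0^{-1}\kappa_2^{-N_0}]$ of \cref{lem:SigmaHyperbolicity}, which is polynomially wide around $\|\rho_b\|^{-C_0}$ since $\kappa_1>\kappa_2$; so the calibration ``the twist $\rho_b(\Phi^\alpha(\cdot)^{-1})$ varies by $O(1)$ on each cylinder'' cannot hold uniformly over branches, which is precisely why the paper dampens on metric (CC) balls of radius proportional to $1/\|\rho_b\|$ rather than on cylinders. Second, taking $\beta_J=1-\mu\chi_J$ with an indicator destroys \cref{itm:FrameFlowDolgopyatProperty1}: $\mathcal{N}_{a,J}$ must map the $C^1$ cone $K_{E\|\rho_b\|}(\tilde{U})$ into itself for the inductive iteration to run, so the dampening must be by smooth bumps $\tilde{\psi}^{(b,\rho),H}_{(k,r,p,l)}$ with $C^1$ norm $\lesssim\|\rho_b\|/\epsilon_2$, as in \cref{sec:ConstructionOfDolgopyatOperators}. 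The reduction of \cref{thm:TheoremFrameFlow} from the three Dolgopyat-operator properties by induction is fine and is exactly what the paper does; the decisive issue to fix is the uniformity of the contraction rate.
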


\Cref{thm:TheoremExponentialMixingOfFrameFlow} is derived from \cref{thm:TheoremFrameFlow} using arguments of Pollicott and Paley--Wiener theory exactly as in \cite[\S\ 10]{SW21} and then using a convolutional argument as mentioned in the remark after \cite[Theorem 1.2]{SW21} (see also \cite[Theorem 3.1.4]{Sar22}). \Cref{thm:TheoremFrameFlow} follows from \cref{thm:FrameFlowDolgopyat} below, which captures the mechanism of Dolgopyat's method, by a standard inductive argument as in the proof after \cite[Theorem 5.4]{SW21}. To state \cref{thm:FrameFlowDolgopyat}, define the cone
\begin{align*}
K_B(\tilde{U}) &= \{h \in C^1(\tilde{U}, \mathbb R): h > 0, \|(dh)_u\|_{\mathrm{op}} \leq Bh(u) \text{ for all } u \in \tilde{U}\} \\
&= \{h \in C^1(\tilde{U}, \mathbb R): h > 0, |\log \circ h|_{C^1} \leq B\}.
\end{align*}

\begin{theorem}
\label{thm:FrameFlowDolgopyat}
There exist $m \in \mathbb N$, $\eta \in (0, 1)$, $E > \max\left(1, \frac{1}{b_0}, \frac{1}{\delta_\varrho}\right)$, $a_0 > 0$, $b_0 > 0$, and a set of operators $\{\mathcal{N}_{a, J}^H: C^1(\tilde{U}, \mathbb R) \to C^1(\tilde{U}, \mathbb R): H \in \mathcal{V}_\rho(\tilde{U}), |a| < a_0, J \in \mathcal{J}(b, \rho), \text{ for some } (b, \rho) \in \widehat{M}_0(b_0)\}$, where $\mathcal{J}(b, \rho)$ is some finite set for all $(b, \rho) \in \widehat{M}_0(b_0)$, such that
\begin{enumerate}
\item\label{itm:FrameFlowDolgopyatProperty1}	$\mathcal{N}_{a, J}^H\bigl(K_{E\|\rho_b\|}(\tilde{U})\bigr) \subset K_{E\|\rho_b\|}(\tilde{U})$ for all $H \in \mathcal{V}_\rho(\tilde{U})$, $|a| < a_0$, $J \in \mathcal{J}(b, \rho)$, and $(b, \rho) \in \widehat{M}_0(b_0)$;
\item\label{itm:FrameFlowDolgopyatProperty2}	$\left\|\mathcal{N}_{a, J}^H(h)\right\|_2 \leq \eta \|h\|_2$ for all $h \in K_{E\|\rho_b\|}(\tilde{U})$, $H \in \mathcal{V}_\rho(\tilde{U})$, $|a| < a_0$, $J \in \mathcal{J}(b, \rho)$, and $(b, \rho) \in \widehat{M}_0(b_0)$;
\item\label{itm:FrameFlowDolgopyatProperty3}	for all $|a| < a_0$ and $(b, \rho) \in \widehat{M}_0(b_0)$, if $H \in \mathcal{V}_\rho(\tilde{U})$ and $h \in K_{E\|\rho_b\|}(\tilde{U})$ satisfy
\begin{enumerate}[label=(1\alph*), ref=\theenumi(1\alph*)]
\item\label{itm:FrameFlowDominatedByh}	$\|H(u)\|_2 \leq h(u)$ for all $u \in \tilde{U}$,
\item\label{itm:FrameFlowLogLipschitzh}	$\|(dH)_u\|_{\mathrm{op}} \leq E\|\rho_b\|h(u)$ for all $u \in \tilde{U}$,
\end{enumerate}
then there exists $J \in \mathcal{J}(b, \rho)$ such that
\begin{enumerate}[label=(2\alph*), ref=\theenumi(2\alph*)]
\item\label{itm:FrameFlowDominatedByDolgopyat}	$\bigl\|\tilde{\mathcal{M}}_{\xi, \rho}^m(H)(u)\bigr\|_2 \leq \mathcal{N}_{a, J}^H(h)(u)$ for all $u \in \tilde{U}$,
\item\label{itm:FrameFlowLogLipschitzDolgopyat}	$\bigl\|\bigl(d\tilde{\mathcal{M}}_{\xi, \rho}^m(H)\bigr)_u\bigr\|_{\mathrm{op}} \leq E\|\rho_b\|\mathcal{N}_{a, J}^H(h)(u)$ for all $u, u' \in \tilde{U}$.
\end{enumerate}
\end{enumerate}
\end{theorem}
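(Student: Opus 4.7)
The plan is to follow Dolgopyat's method as executed by Sarkar--Winter in the real hyperbolic case, replacing $\LieN^\pm$ by $\LieSimple^\pm$ throughout so that the general LNIC and NCP proved in \cref{sec:LNIC&NCP} apply. Fix $m \in \N$ large enough that any cylinder $\tilde{\mathtt{C}}[\alpha]$ with $\len(\alpha) = m$ has diameter much smaller than $\hat{\delta}$, fix $\mu \in (0, 1)$ small, $\eta \in (0, 1)$ close to $1$, and $E$ much larger than $T_0$. The Dolgopyat operators will take the form
\begin{align*}
\mathcal{N}_{a, J}^H(h) = \tilde{\mathcal{L}}_a^m(\beta_J \cdot h),
\end{align*}
where $\beta_J : \tilde{U} \to [1 - \mu, 1]$ is a smooth cutoff equal to $1 - \mu$ on a union of balls of radius of order $1/\|\rho_b\|$ (one per cylinder in a dense admissible family encoded by $J$) and equal to $1$ off a slightly larger neighborhood. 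The set $\mathcal{J}(b, \rho)$ enumerates all such admissible $J$; it is finite because there are only finitely many cylinders of length $m$.

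Property~\ref{itm:FrameFlowDolgopyatProperty1} will follow because $|\log \beta_J|_{C^1} = O(\|\rho_b\|)$ by construction, so $\beta_J h \in K_{C'\|\rho_b\|}(\tilde{U})$ for some absolute $C'$, and a standard Lasota--Yorke bound for $\tilde{\mathcal{L}}_a^m$ (using the RPF theorem and a choice of $E$ absorbing $C'$ after $m$ iterations) returns the product into $K_{E\|\rho_b\|}(\tilde{U})$. Property~\ref{itm:FrameFlowDolgopyatProperty2} is an $L^2$ contraction: by the NCP expressed in $\LieSimple^+$ and the doubling property of $\mu^{\mathrm{PS}}$ in the Carnot--Carath\'{e}odory metric, each damping ball has $\nu_U$-measure bounded below uniformly in $J$, so one obtains $\int \beta_J^2 h^2 \, d\nu_{\tilde{U}} \leq \eta_0 \int h^2 \, d\nu_{\tilde{U}}$ for some $\eta_0 < 1$; combined with the invariance $\mathcal{L}_0^*(\nu_U) = \nu_U$ and Cauchy--Schwarz, this yields the desired contraction.

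Property~\ref{itm:FrameFlowDolgopyatProperty3} is the core of the argument. Given $H, h$ satisfying \ref{itm:FrameFlowDominatedByh} and \ref{itm:FrameFlowLogLipschitzh}, one groups the summands in \cref{eqn:k^thIterationOfCongruenceTransferOperatorOfType_rho} into pairs $(\alpha, \beta)$ of neighboring inverse branches. The general LNIC guarantees that the phase $\rho_b(\Phi^\alpha(\sigma^{-\alpha}(u))\Phi^\beta(\sigma^{-\beta}(u))^{-1})$ varies in some direction $z \in \LieA \oplus \LieM$ selected by \cref{lem:maActionLowerBound} at rate at least $\varepsilon_1 \|\rho_b\|$ per unit displacement in an $\Ad$-dual direction inside $\LieSimple^+$; the NCP along $\LieSimple^+$ then ensures that each cylinder carries positional displacement of $\nu_U$-mass bounded below at scale $1/\|\rho_b\|$ along that dual direction. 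Combining this phase oscillation with the log-Lipschitz hypothesis on $H$ via the triangle inequality produces, for a definite fraction of the cylinders, a gain factor $\leq 1 - \mu$ in $V_\rho^{\oplus \dim(\rho)}$. Encoding the gaining cylinders as $J$ gives \ref{itm:FrameFlowDominatedByDolgopyat}; bound \ref{itm:FrameFlowLogLipschitzDolgopyat} then follows from differentiating \cref{eqn:k^thIterationOfCongruenceTransferOperatorOfType_rho}, using $\|(dH)_u\|_{\mathrm{op}} \leq E\|\rho_b\|h(u)$ together with the $T_0$-Lipschitz bounds on $\tau_\alpha$, $f_\alpha^{(a)}$, and $\Phi^\alpha$.

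The main obstacle is aligning the direction of phase oscillation in $\LieA \oplus \LieM$, produced dynamically by LNIC, with the direction of positional displacement inside $\tilde{U}$ supplied by NCP. In the two-step nilpotent setting the positional direction must come from $\LieSimple^+$ rather than $\LieN^+$, since the $[\LieSimple^+, \LieSimple^+]$ component contracts under $\Ad_{a_t}$ at rate $e^{-2t}$ and cannot produce an NCP of the required scaling. This is precisely why the paper formulates both LNIC and NCP via $\LieSimple^+$ and exploits $[\LieSimple^+, \LieSimple^-] = \LieA \oplus \LieM$; once these are in place, the construction of $\mathcal{N}_{a, J}^H$ and the verification of \ref{itm:FrameFlowDolgopyatProperty1}--\ref{itm:FrameFlowDolgopyatProperty3} follow the template of \cite{SW21} essentially verbatim.
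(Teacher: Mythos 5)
Your overall strategy---Dolgopyat operators of the form $\mathcal{N}_{a,J}^H(h) = \tilde{\mathcal{L}}_a^m(\beta_J^H h)$ with damping at scale $1/\|\rho_b\|$, cancellation driven by the LNIC and NCP formulated along $\LieSimple^+$, and the doubling property of the Patterson--Sullivan measure in the Carnot--Carath\'{e}odory metric---is the same as the paper's, which likewise defers most verifications to \cite{SW21}. However, your bookkeeping of the index set $J$ is wrong in a way that would break \cref{itm:FrameFlowDolgopyatProperty2}. You place one damping ball of radius $\asymp 1/\|\rho_b\|$ ``per cylinder'' and justify finiteness of $\mathcal{J}(b,\rho)$ by the finitely many cylinders of length $m$. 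That gives only $O_m(1)$ damping balls, each of $\nu_U$-measure tending to $0$ as $\|\rho_b\| \to \infty$ (their mass is comparable to $\|\rho_b\|^{-\delta_\Gamma}$, not ``bounded below uniformly'' as you assert), so $\int \beta_J^2 h^2 \, d\nu_{\tilde U} \geq (1-o(1))\int h^2 \, d\nu_{\tilde U}$ and no contraction constant $\eta<1$ uniform over $(b,\rho)\in\widehat{M}_0(b_0)$ can result. In the paper the index set $\Xi_1(b,\rho)$ comes from a finite Vitali cover of the fixed cylinders $\mathtt{C}_k$ by Carnot--Carath\'{e}odory balls $\hat{C}_{k,r}^{(b,\rho)}$ of radius $5\epsilon_1/\|\rho_b\|$, so the \emph{number} of damping balls grows with $\|\rho_b\|$; density of $J$ means at least one damped ball inside each covering ball, and the measure lower bound is only \emph{relative} to the covering ball containing it, via the doubling/Federer property. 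Finiteness of $\mathcal{J}(b,\rho)$ is then finiteness of this cover for each fixed $(b,\rho)$, not a count of length-$m$ cylinders.

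A second point you gloss over is where LNIC, NCP, and \cref{lem:maActionLowerBound} actually interlock: the damping regions must depend on $H$. For each $(k,r)$ the paper keeps the center $x_{k,r,1}^{(b,\rho)}$ and a partner point $x_{k,r,2}^{(b,\rho),H}$ produced by \cref{lem:PartnerPointInZariskiDenseLimitSetForBPBound}, where the NCP is applied to the pulled-back direction $(L_1^*\circ L_{2,j}^*\circ L_3^*)(\omega_0)$ determined by the vector $\rho_b\bigl(\Phi^{\alpha_0}(v_0(x_{k,r,1}^{(b,\rho)}))^{-1}\bigr)H\bigl(v_0(x_{k,r,1}^{(b,\rho)})\bigr)$; this direction lies in $\mathfrak{R}_{\varkappa_0}(\LieSimple^+)$ precisely because \cref{pro:FrameFlowLNIC} gives the lower bound $\varepsilon_2$ along $(d\Psi)_{\check u}(\LieSimple^+)$. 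The dense subsets $J\subset\Xi(b,\rho)=\Xi_1(b,\rho)\times\Xi_2$ record, for each $(k,r)$, which of the four alternatives (center or partner point, branch $v_0$ or $v_{j_{k,r}^{(b,\rho),H}}$) achieves the cancellation in \cref{lem:chiLessThan1}; this is exactly what makes \cref{itm:FrameFlowDominatedByDolgopyat} verifiable for every admissible pair $(H,h)$ while $\mathcal{J}(b,\rho)$ remains a single finite set independent of $H$. Your description of pairing ``neighboring inverse branches'' and gaining on ``a definite fraction of the cylinders'' misses this structure, and without it the selection of $J$ in \cref{itm:FrameFlowDolgopyatProperty3} cannot be carried out as stated.
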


\section{Local non-integrability condition and non-concentration property}
\label{sec:LNIC&NCP}
This is the key section of the paper in which we prove the essential \emph{local non-integrability condition (LNIC)} and \emph{non-concentration property (NCP)}. Similar properties were proven for real hyperbolic manifolds in \cite{SW21}. For nonreal hyperbolic manifolds, one needs to deal with the subtle issue that the associated root system has an extra positive root. In this section we refine the techniques from \cite{SW21} to prove the appropriate generalized LNIC and NCP.

\subsection{Local non-integrability condition}
We start with a slight generalization of \cite[Definition 6.1]{SW21}. Choose unique isometric lifts $\tilde{\mathsf{R}}_j = \big[\tilde{\mathsf{U}}_j, \mathsf{S}_j\big] \subset \T^1(\mathbb{H}_\mathbb{K}^n)$ of $\tilde{R}_j$ for all $j \in \mathcal{A}$. For all $u \in \tilde{R}$, let $\tilde{u} \in \tilde{\mathsf{R}}$ denote the unique lift in $\tilde{\mathsf{R}}$. We then lift the section $F$ to $\mathsf{F}: \bigsqcup_{\gamma \in \Gamma} \gamma\tilde{\mathsf{R}} \to \Hol(\mathbb{H}_\mathbb{K}^n)$ in the natural way.

\begin{definition}[Associated sequence in $G$]
Let $z_1 \in \tilde{R}_1$ be the center. Consider some sequence of tangent vectors $(z_1, z_2, z_3, z_4, z_1) \in (\tilde{R}_1)^5$ such that $z_2 \in S_1$, $z_4 \in U_1$ and $z_3 = [z_4, z_2]$. Its lift to the universal cover is $(\tilde{z}_1, \tilde{z}_2, \tilde{z}_3, \tilde{z}_4, \tilde{z}_1) \in (\tilde{\mathsf{R}}_1)^5 \subset \T^1(\mathbb{H}_\mathbb{K}^n)^5 \cong (G/M)^5$. We define an \emph{associated sequence in $G$} to be the unique sequence $(g_1, g_2, \dotsc, g_5) \in G^5$ where
\begin{align*}
g_1 &= \mathsf{F}(\tilde{z}_1), \\
g_2 &= \mathsf{F}(\tilde{z}_2) \in g_1N^- \text{ such that } g_2M = \tilde{z}_2 \in \T^1(\mathbb{H}_\mathbb{K}^n) \cong G/M, \\
g_3 &\in g_2N^+ \text{ such that } g_3a_tM = \tilde{z}_3 \in \T^1(\mathbb{H}_\mathbb{K}^n) \cong G/M \text{ for some } t \in (-\underline{\tau}, \underline{\tau}), \\
g_4 &\in g_3N^- \text{ such that } g_4a_tM = \tilde{z}_4 \in \T^1(\mathbb{H}_\mathbb{K}^n) \cong G/M \text{ for some } t \in (-\underline{\tau}, \underline{\tau}), \\
g_5 &\in g_4N^+ \text{ such that } g_5a_tM = \tilde{z}_1 \in \T^1(\mathbb{H}_\mathbb{K}^n) \cong G/M \text{ for some } t \in (-\underline{\tau}, \underline{\tau}).
\end{align*}
\end{definition}

Continuing the same notation in the above definition, define
\begin{align*}
N_1^+ &= \{n^+ \in N^+: F(z_1)n^+ \in F(\tilde{U}_1)\} \subset N^+, \\
N_1^- &= \{n^- \in N^-: F(z_1)n^- \in F(S_1)\} \subset N^-
\end{align*}
where the first is open and the second is compact. Suppose that the sequence $(z_1, z_2, z_3, z_4, z_1)$ corresponds to some $n^+ \in N_1^+$ and $n^- \in N_1^-$ so that $F(z_4) = F(z_1)n^+$ and $F(z_2) = F(z_1)n^-$. Then, we define the map $\Xi: N_1^+ \times N_1^- \to AM$ by
\begin{align*}
\Xi(n^+, n^-) = g_5^{-1}g_1 \in AM.
\end{align*}
To view it as a function of the first coordinate for a fixed $n^- \in N_1^-$, we write $\Xi_{n^-}: N_1^+ \to AM$.

Let $z_1 \in \tilde{R}_1$ be the center. Let $j \in \mathbb N$ and $\alpha = (\alpha_0, \alpha_2, \dotsc, \alpha_{j - 1}, 1)$ be an admissible sequence. Then, there exists an element which we denote by $n_\alpha \in N_1^-$ such that
\begin{align*}
F(\mathcal{P}^j(\sigma^{-\alpha}(z_1))) = F(z_1)n_\alpha.
\end{align*}
This is well-defined because $\sigma^{-\alpha}(z_1) \in \mathtt{C}[\alpha] \subset U$.

In order to derive the LNIC in \cref{pro:FrameFlowLNIC}, we need the following lemmas regarding $\Xi$ whose proofs are as in \cite[Lemmas 6.2 and 6.3]{SW21}.

\begin{lemma}
\label{lem:BrinPesinInTermsOfHolonomy}
Let $j \in \mathbb N$, $\alpha = (\alpha_0, \alpha_1, \dotsc, \alpha_{j - 1}, 1)$ be an admissible sequence, and $n^- = n_\alpha \in N_1^-$. Let $u \in \tilde{U}_1$ and $n^+ \in N_1^+$ such that $F(u) = F(z_1)n^+$ where $z_1 \in \tilde{R}_1$ is the center. Then, we have
\begin{align*}
\Xi(n^+, n^-) = \Phi^\alpha(\sigma^{-\alpha}(z_1))^{-1}\Phi^\alpha(\sigma^{-\alpha}(u)).
\end{align*}
\end{lemma}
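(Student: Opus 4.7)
The plan is to trace the definition of $\Xi(n^+,n^-)=g_5^{-1}g_1$ through the quadrilateral $(g_1,\dotsc,g_5)$ and show that this $AM$-valued closure defect equals the difference of iterated flow-and-holonomy cocycles along $\alpha$. The three ingredients I would invoke are: (i) the iterated identity $F(\mathcal{P}^j(\cdot))=F(\cdot)\Phi^\alpha(\cdot)$ lifted to $G$, (ii) the normalizer property that $AM$ normalizes $N^+$ and $N^-$, and (iii) a dynamical identification of $z_2$ and $z_3$ as $\mathcal{P}^j$-images of $\sigma^{-\alpha}(z_1)$ and $\sigma^{-\alpha}(u)$.

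First I would identify $z_2$ and $z_3$. Since $F$ is injective on each $R_j$ and $n^-=n_\alpha$ is defined by $F(\mathcal{P}^j(\sigma^{-\alpha}(z_1)))=F(z_1)n^-$, comparison with $F(z_2)=F(z_1)n^-$ forces $z_2=\mathcal{P}^j(\sigma^{-\alpha}(z_1))$. The two pre-images $\sigma^{-\alpha}(z_1)$ and $\sigma^{-\alpha}(u)$ both lie in $\tilde{\mathtt{C}}[\alpha]\subset\tilde{U}_{\alpha_0}$, hence on a common strong unstable leaf; applying $\mathcal{P}^j$ expands this leaf onto the strong unstable leaf through $z_2$, while $\proj_U\circ\mathcal{P}^j(\sigma^{-\alpha}(u))=\sigma^j(\sigma^{-\alpha}(u))=u$. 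The only point of $R_1$ satisfying both constraints is $[u,z_2]=z_3$, so $z_3=\mathcal{P}^j(\sigma^{-\alpha}(u))$. Lifting to $G$ with compatible choices, the iterated cocycle identity gives
\begin{align*}
\mathsf{F}(\tilde{z}_2)=\mathsf{F}\bigl(\widetilde{\sigma^{-\alpha}(z_1)}\bigr)\Phi^\alpha(\sigma^{-\alpha}(z_1)),\qquad \mathsf{F}(\tilde{z}_3)=\mathsf{F}\bigl(\widetilde{\sigma^{-\alpha}(u)}\bigr)\Phi^\alpha(\sigma^{-\alpha}(u)).
\end{align*}
Since the two pre-images share a strong unstable leaf upstairs, there is a unique $\hat{n}^+\in N^+$ with $\mathsf{F}\bigl(\widetilde{\sigma^{-\alpha}(u)}\bigr)=\mathsf{F}\bigl(\widetilde{\sigma^{-\alpha}(z_1)}\bigr)\hat{n}^+$. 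Eliminating $\mathsf{F}\bigl(\widetilde{\sigma^{-\alpha}(z_1)}\bigr)$ and conjugating $\hat n^+$ past $\Phi^\alpha(\sigma^{-\alpha}(z_1))\in AM$ (which normalizes $N^+$) yields
\begin{align*}
\mathsf{F}(\tilde{z}_3)=\mathsf{F}(\tilde{z}_2)\,\tilde{n}^+\,\Psi,\qquad \Psi:=\Phi^\alpha(\sigma^{-\alpha}(z_1))^{-1}\Phi^\alpha(\sigma^{-\alpha}(u))\in AM,
\end{align*}
for a unique $\tilde{n}^+\in N^+$.

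Finally I would match this with the associated sequence. The displayed identity forces $g_3=g_2\tilde{n}^+\in g_2N^+$ together with $g_3\Psi=\mathsf{F}(\tilde{z}_3)$, so writing $\Psi=a_tm$ recovers the defining condition $g_3a_tM=\tilde{z}_3$. Since $z_3=[u,z_2]$ and $z_4=u$ we have $\mathsf{F}(\tilde{u})=\mathsf{F}(\tilde{z}_3)\check{n}^-$ for some $\check{n}^-\in N^-$; commuting $\Psi$ past $\check{n}^-$ (using that $AM$ normalizes $N^-$) produces $g_4\in g_3N^-$ with $g_4\Psi=\mathsf{F}(\tilde{u})$, hence $g_4a_tM=\tilde{u}$. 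An analogous step using $\mathsf{F}(\tilde{z}_1)=\mathsf{F}(\tilde{u})(n^+)^{-1}$ and commuting $\Psi$ past the resulting $N^+$-factor yields $g_5\in g_4N^+$ with $g_5\Psi=\mathsf{F}(\tilde{z}_1)=g_1$, so $\Xi(n^+,n^-)=g_5^{-1}g_1=\Psi$, which is the claim. The main technical obstacle is the careful bookkeeping of lifts $\mathsf{F}$ and the propagation of $\Psi\in AM$ through the successive $N^\pm$-factors via the normalizer property, exactly as in the proof of \cite[Lemma 6.2]{SW21}.
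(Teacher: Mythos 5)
Your proof is correct and is essentially the paper's own argument, which simply defers to \cite[Lemma 6.2]{SW21}: identify $z_2$ and $z_3$ as the $\mathcal{P}^j$-images (in the extended sense) of $\sigma^{-\alpha}(z_1)$ and $\sigma^{-\alpha}(u)$, apply the iterated identity $F(\mathcal{P}^j(\cdot)) = F(\cdot)\Phi^\alpha(\cdot)$, and propagate $\Psi = \Phi^\alpha(\sigma^{-\alpha}(z_1))^{-1}\Phi^\alpha(\sigma^{-\alpha}(u)) \in AM$ through the successive $N^\pm$-factors using that $AM$ normalizes $N^+$ and $N^-$, concluding by the uniqueness of the associated sequence. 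The one slip is that $\mathcal{P}^j$ carries $\tilde{\mathtt{C}}[\alpha]$ into the \emph{weak} (not strong) unstable leaf through $z_2$, since the return times $\tau_\alpha(\sigma^{-\alpha}(z_1))$ and $\tau_\alpha(\sigma^{-\alpha}(u))$ differ; as $z_3 = [u, z_2] = W^{\mathrm{ss}}_{\epsilon_0'}(u) \cap W^{\mathrm{wu}}_{\epsilon_0'}(z_2)$, this corrected statement is exactly what your uniqueness argument for $z_3 = \mathcal{P}^j(\sigma^{-\alpha}(u))$ needs.
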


Define $\pi: \mathfrak{g} \to \LieA \oplus \LieM$ to be the projection with respect to the decomposition $\mathfrak{g} = \LieA \oplus \LieM \oplus \LieN^+ \oplus \LieN^-$. For all $\epsilon \in (0, \epsilon_0]$ where $\epsilon_0$ is as in \cref{subsec:MarkovSections}, define
\begin{align*}
N_{1, \epsilon}^+ &= \left\{n^+ \in N^+: F(z_1)n^+ \in F\left(W_\epsilon^{\mathrm{su}}(z_1)\right)\right\} \\
N_{1, \epsilon}^- &= \left\{n^- \in N^-: F(z_1)n^- \in F\left(W_\epsilon^{\mathrm{ss}}(z_1)\right)\right\}
\end{align*}
which are simply $\epsilon$-balls with respect to the induced metric $d_{N^+}$ and $d_{N^-}$ respectively, where $z_1 \in \tilde{R}_1$ is the center.

\begin{lemma}
\label{lem:BrinPesinDerivativeImageIsAdjointProjection}
For all $n^- \in N_1^-$, we have
\begin{align*}
(d\Xi_{n^-})_e = \pi \circ \Ad_{n^-}|_{\LieN^+} \circ (dh_{n^-})_e
\end{align*}
where $h_{n^-}: N_1^+ \to N^+$ is a diffeomorphism onto its image which is also smooth in $n^- \in N_{1, \epsilon_0}^-$ and satisfies $h_e = \Id_{N_1^+}$. Consequently, its image is $(d\Xi_{n^-})_e(\LieN^+) = \pi(\Ad_{n^-}(\LieN^+)) \subset \LieA \oplus \LieM$.
\end{lemma}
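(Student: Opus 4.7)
I would adapt the argument of \cite[Lemma 6.3]{SW21} essentially verbatim. That proof relies only on (i) the direct sum decomposition $\LieG = \LieA \oplus \LieM \oplus \LieN^+ \oplus \LieN^-$, and (ii) smoothness of the Bruhat decomposition $G \supset N^+ \cdot AM \cdot N^-$ on the big cell; both remain available in our setting. The only new feature in nonreal rank one---the extra root spaces $\LieG_{\pm 2\alpha}$ inside $\LieN^\pm$---does not affect the argument, since at no point do we need commutativity of $\LieN^\pm$.

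The first step is to give an intrinsic description of $h_{n^-}$. Using $\tilde z_4 = g_1 n^+ M$, $\tilde z_2 = g_1 n^- M$, and $\tilde z_3 = [\tilde z_4, \tilde z_2]$, I would unwind the constraint on $g_3$ to show that $g_3 = g_1 n^- h$ for the unique $h \in N^+$ whose $N^+$-component in the Bruhat decomposition $n^- h = \tilde n^+ \cdot am \cdot \tilde n^-$ equals $n^+$. Equivalently, $h_{n^-}$ is the local inverse of the smooth map $F_{n^-}\colon h \mapsto \pi_{N^+}(n^- h)$, where $\pi_{N^+}$ is the $N^+$-Bruhat projection. Smoothness of $h_{n^-}$ in both variables is then immediate, and $h_e = \Id_{N_1^+}$ is obvious. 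The differential $(dF_{n^-})_e = \pi_{N^+} \circ \Ad_{n^-}|_{\LieN^+}$ is an isomorphism for $n^-$ near $e$ and, by staying in the big cell, throughout $N_1^-$, so $h_{n^-}$ is a diffeomorphism onto its image.

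Second, I would trace the remainder of the chain to obtain a clean formula for $\Xi$. A direct computation using $\tilde z_4 = g_1 n^+ M$ and $\tilde z_1 = g_1 M$ pins $g_4$ and $g_5$ down uniquely and shows that $g_5 = g_1 \cdot (am)^{\pm 1}$, where $am$ is the $AM$-Bruhat component of $n^- h_{n^-}(n^+)$. Thus $\Xi(n^+, n^-)$ is (up to inverse) the $AM$-Bruhat component of $n^- h_{n^-}(n^+)$.

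Finally, I would differentiate at $n^+ = e$. Setting $n^+ = \exp(\epsilon Y)$ and $h = h_{n^-}(n^+) = \exp(\epsilon X) + O(\epsilon^2)$ with $X = (dh_{n^-})_e(Y) \in \LieN^+$, the identity $n^- \exp(\epsilon X) = \exp(\epsilon \Ad_{n^-}(X)) \, n^-$ together with the first-order Bruhat decomposition (which at the identity matches the linear projections of $\LieG$ onto $\LieN^+$, $\LieA \oplus \LieM$, and $\LieN^-$) immediately gives that the $AM$-Bruhat component of $n^- h$ is $\exp(\epsilon \, \pi(\Ad_{n^-}(X))) + O(\epsilon^2)$. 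Therefore $(d\Xi_{n^-})_e = \pi \circ \Ad_{n^-}|_{\LieN^+} \circ (dh_{n^-})_e$, and its image is $\pi(\Ad_{n^-}(\LieN^+)) \subset \LieA \oplus \LieM$ since $(dh_{n^-})_e$ is an isomorphism of $\LieN^+$. I do not anticipate a serious obstacle beyond bookkeeping of the chain $g_1, \dots, g_5$ and consistency of sign and Bruhat conventions.
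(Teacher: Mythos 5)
Your proposal is correct and follows essentially the same route as the paper, which simply invokes the argument of \cite[Lemma 6.3]{SW21}: identify $h_{n^-}$ with the local inverse of the $N^+$-component map of $n^-h$ in the $N^+\cdot AM\cdot N^-$ decomposition, express $\Xi(n^+,n^-)$ through the $AM$-component, and differentiate at $e$ using $n^-\exp(\epsilon X)=\exp(\epsilon\Ad_{n^-}(X))n^-$; as you note, the extra root space $\LieG_{\pm2\alpha}$ plays no role since commutativity of $\LieN^\pm$ is never used. The only point left to pin down is the inverse/sign convention relating $g_5^{-1}g_1$ to the $AM$-component (your ``$(am)^{\pm1}$'' hedge), which affects neither the displayed formula once conventions are fixed nor the image statement $(d\Xi_{n^-})_e(\LieN^+)=\pi(\Ad_{n^-}(\LieN^+))$ that is actually used.
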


Helgason proved a very useful identity \cite[Chapter III, \S\ 1, Lemma 1.2]{Hel70} for noncompact connected semisimple Lie groups of arbitrary rank. We improve the argument to obtain the following stronger form of Helgason's identity in the rank one setting.

\begin{proposition}
\label{pro:BracketSpanLieA+LieM}
We have $[\LieSimple^-, \LieSimple^+] = \LieA \oplus \LieM$.
\end{proposition}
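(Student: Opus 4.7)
My plan is to deduce the proposition from the simplicity of $\LieG$. The containment $[\LieSimple^-, \LieSimple^+] \subset \LieA \oplus \LieM$ is immediate from $[\LieG_\alpha, \LieG_{-\alpha}] \subset \LieG_0 = \LieA \oplus \LieM$, so the task is to establish the reverse inclusion. Setting $I := [\LieSimple^-, \LieSimple^+] = [\LieG_\alpha, \LieG_{-\alpha}]$, I will show that $\LieH := I \oplus \LieG_\alpha \oplus \LieG_{-\alpha} \oplus \LieG_{2\alpha} \oplus \LieG_{-2\alpha}$ is a nonzero ideal of $\LieG$; by simplicity this forces $\LieH = \LieG$, and then intersecting against the restricted root space decomposition extracts $I = \LieH \cap \LieG_0 = \LieG_0 = \LieA \oplus \LieM$.

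The core technical step will be the Jacobi absorption $[\LieG_{2\alpha}, \LieG_{-2\alpha}] \subset I$. Using the corollary $\LieG_{\pm 2\alpha} = [\LieG_{\pm\alpha}, \LieG_{\pm\alpha}]$, a typical generator of $[\LieG_{2\alpha}, \LieG_{-2\alpha}]$ has the form $[[X_1, X_2], [Y_1, Y_2]]$ with $X_1, X_2 \in \LieG_\alpha$ and $Y_1, Y_2 \in \LieG_{-\alpha}$. Applying the Jacobi identity,
\[
[[X_1, X_2], [Y_1, Y_2]] = [[[X_1, X_2], Y_1], Y_2] + [Y_1, [[X_1, X_2], Y_2]],
\]
and since each inner bracket $[[X_1, X_2], Y_j]$ lies in $[\LieG_{2\alpha}, \LieG_{-\alpha}] \subset \LieG_\alpha$, both summands lie in $[\LieG_\alpha, \LieG_{-\alpha}] = I$. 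The same Jacobi trick, now expanding $[Z, [X, Y]] = [[Z, X], Y] + [X, [Z, Y]]$ for $X \in \LieG_\alpha$, $Y \in \LieG_{-\alpha}$ and $Z \in \LieM$ or $Z \in I$, also establishes $[\LieM, I] \subset I$ and $[I, I] \subset I$.

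Combining these absorptions with the restricted root data and the fact that $\LieM$ centralizes $\LieA$ (so $[\LieA, \LieG_0] = 0$), it is routine to verify $[\LieG, \LieH] \subset \LieH$: brackets involving $\LieG_{\pm\alpha}$ or $\LieG_{\pm 2\alpha}$ land in another root space already inside $\LieH$ or inside $I$; brackets involving $\LieA$ or $\LieM$ preserve each root space and preserve $I$ by the above. Since $\LieH$ is nonzero and $\LieG$ is simple, $\LieH = \LieG$, and comparing against the $\ad(\LieA)$-grading yields $I = \LieG_0 = \LieA \oplus \LieM$.

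I expect the only subtle point to be the Jacobi absorption $[\LieG_{2\alpha}, \LieG_{-2\alpha}] \subset I$; this is precisely the ingredient missing from Helgason's original argument, which only controls the $\LieA$-projection of $[\LieG_\alpha, \LieG_{-\alpha}]$. This absorption is vacuous in the real case where $\LieG_{2\alpha} = 0$, which is consistent with the observation in the paper that the real case is simpler. Everything else should reduce to bookkeeping of the restricted root decomposition.
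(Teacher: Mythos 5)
Your proof is correct, and it takes a genuinely different route from the paper's. The paper argues in two direct steps: it gets $\LieA \subset [\LieSimple^-, \LieSimple^+]$ by pairing a nonzero $s^- \in \LieSimple^-$ with $s^+ = \theta(s^-)$ and using positivity of $B_\theta$, and it gets $\LieM \subset [\LieSimple^-, \LieSimple^+]$ by contradiction, choosing a Cartan subalgebra through a hypothetical $m \in \LieM$ that is $B$-orthogonal to $[\LieSimple^-, \LieSimple^+]$, using the complexified root space decomposition to produce $x \in \LieSimple^-$ with $[m, x] \neq 0$, and deriving the contradiction $0 > -B_\theta([m, x], [m, x]) = B(m, [x, \theta[m, x]]) = 0$; this is the advertised refinement of Helgason's argument. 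You avoid the Killing form and the complexification entirely: with $I = [\LieG_\alpha, \LieG_{-\alpha}]$ you check that $I \oplus \LieG_\alpha \oplus \LieG_{-\alpha} \oplus \LieG_{2\alpha} \oplus \LieG_{-2\alpha}$ is an ideal, the only nontrivial point being the absorption $[\LieG_{2\alpha}, \LieG_{-2\alpha}] \subset I$, which your Jacobi computation together with $\LieG_{-2\alpha} = [\LieG_{-\alpha}, \LieG_{-\alpha}]$ (the corollary in \cref{subsec:LieTheory}, which the paper's proof also invokes when it writes $x = [x', x'']$) makes immediate; simplicity of $\LieG$ and the $\ad(\LieA)$-grading then finish the argument, and the remaining bookkeeping you defer is indeed routine. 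Your version is more elementary and self-contained in rank one, trading Helgason's bilinear-form computation for the structural input of simplicity, but it uses rank one crucially: since $\pm\alpha, \pm 2\alpha$ exhaust the nonzero restricted roots, the candidate ideal closes up. The paper's Killing-form argument is the one that extends to the higher rank statement $\sum_{\alpha \in \Pi} [\LieG_\alpha, \LieG_{-\alpha}] = \LieA \oplus \LieM$ of \cref{pro:HigherRankBracketSpanLieA+LieM}, where an ideal built from a single simple root no longer works and summing over all of $\Phi^+$ would only give a weaker conclusion. (Your aside about what Helgason's original lemma does or does not control is speculative and immaterial to the correctness of your proof.)
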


\begin{proof}
We use conventions from \cref{subsec:LieTheory}. Of course the root spaces satisfy $[\LieSimple^-, \LieSimple^+] \subset \LieA \oplus \LieM$ using the Jacobi identity; so we may focus on the reverse containment.

First we show that $\LieA \subset [\LieSimple^-, \LieSimple^+]$. Take any nonzero $s^- \in\LieSimple^-$. It suffices to show that $[s^-, s^+] \in \LieA \setminus \{0\}$ for $s^+ := \theta(s^-)$ since $\dim(\LieA) = 1$. Indeed, $s^+ \in \LieSimple^+$ because for all $t \in \LieA \cong \R$, we have
\begin{align*}
[t, s^+] = \theta[\theta(t), \theta(s^+)] = \theta[-t, s^-] = -t\theta(s^-) = -ts^+.
\end{align*}
Thus, $[s^-, s^+] \in [\LieSimple^-, \LieSimple^+] \subset \LieA \oplus \LieM$. Moreover, we have
\begin{align*}
\theta[s^-, s^+] = [\theta(s^-),\theta(s^+)] = [s^+, s^-] = -[s^-, s^+]
\end{align*}
which implies $[s^-, s^+] \in \LieP$. Hence, $[s^-, s^+] \in (\LieA\oplus\LieM) \cap \LieP = \LieA$. Recalling that $\langle \cdot, \cdot \rangle|_{\LieA \times \LieA} \asymp B|_{\LieA \times \LieA}$, for all $t \in \LieA \cong \R$, we have
\begin{align*}
\langle t, [s^-, s^+]\rangle \asymp B(t, [s^-, s^+]) = B([t, s^-], s^+) = tB(s^-, s^+) = -tB_\theta(s^-, s^-)
\end{align*}
which implies $[s^-, s^+] \asymp -B_\theta(s^-, s^-) \in \LieA \setminus \{0\}$ since $B_\theta$ is positive definite.

Next we show that $\LieM \subset [\LieSimple^-, \LieSimple^+]$. Suppose for the sake of contradiction that there exists a nonzero $m \in \LieM$ such that $B(m, [\LieSimple^-, \LieSimple^+]) = 0$. Let $\mathfrak{t} \subset \LieM$ be the maximal abelian subalgebra containing $m$ so that $\mathfrak{h} := \LieA \oplus \mathfrak{t} \subset \LieG$ is a Cartan subalgebra containing $m$. Consider the complexifications and root space decomposition $\mathfrak{h}^\C \subset \LieG^\C = \LieG_0^\C \oplus \bigoplus_{\beta \in \Phi^\C} \LieG_\beta^\C$ where the ordering of the root system $\Phi^\C$ is compatible with that of the restricted root system $\Phi$. Let $(\Phi^\C)^+$ be the set of positive roots and $(\Phi^\C)_\LieA^+$ be the set of positive roots which do not vanish on $\LieA$. Then \cite[Chapter VI, \S\ 3, Theorem 3.4]{Hel01} gives
\begin{align*}
\LieN^- = \LieG \cap \bigoplus_{\beta \in (\Phi^\C)_\LieA^+} \LieG_\beta^\C.
\end{align*}
Using the decomposition $\LieG = \LieA \oplus \LieM \oplus \LieN^+ \oplus \LieN^- = \LieA \oplus \LieM \oplus \theta(\LieN^-) \oplus \LieN^-$, we can derive that $[m, x] \neq 0$ for some nonzero $x \in \LieG_\beta^\C$ and $\beta \in (\Phi^\C)_\LieA^+$, because otherwise $m \in Z(\LieG) = \{0\}$ by semisimplicity. Let $\alpha = \beta|_{\LieA} \in \Phi^+$. Clearly, $\LieG_\beta^\C \subset \LieG_\alpha + i \LieG_\alpha$. So in fact, $[m, x] \neq 0$ for some nonzero $x \in \LieG_\alpha$. We can assume that $\alpha$ is the simple root because otherwise, writing $x = [x', x''] \in [\LieSimple^-, \LieSimple^-]$, we can conclude that $[m, x'] \neq 0$ or $[m, x''] \neq 0$ using the Jacobi identity. Now, assuming $\alpha$ is the simple root, we have $x \in \LieSimple^-$, $[m, x] \in \LieSimple^-$, and $\theta[m, x] \in \LieSimple^+$. Using our initial hypothesis, we calculate that
\begin{align*}
0 > -B_\theta([m, x], [m, x]) = B([m, x], \theta[m, x]) = B(m, [x, \theta[m, x]]) = 0
\end{align*}
which is a contradiction.
\end{proof}

\begin{remark}
The stronger form of Helgason's identity is essential precisely because the non-concentration property using $\LieN^+$ does not hold. When $\mathbb{K} = \R$, it coincides with the original form of Helgason's identity for which an alternative geometric proof is contained in \cite[Lemma 6.4]{SW21}.
\end{remark}

Although unnecessary for our purposes, it is of interest that \cref{pro:BracketSpanLieA+LieM} can be generalized to the higher rank setting as follows. Only for \cref{pro:HigherRankBracketSpanLieA+LieM}, allow $G$ to be a noncompact connected semisimple Lie group of arbitrary rank and denote by $\Pi \subset \Phi^+$ the set of simple roots.

\begin{proposition}
\label{pro:HigherRankBracketSpanLieA+LieM}
We have $\sum_{\alpha \in \Pi} [\LieG_\alpha, \LieG_{-\alpha}] = \LieA \oplus \LieM$.
\end{proposition}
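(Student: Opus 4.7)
The plan is to prove the two inclusions separately. The containment $\sum_{\alpha \in \Pi}[\LieG_\alpha, \LieG_{-\alpha}] \subset \LieA \oplus \LieM$ is immediate from the Jacobi identity, so the substance lies in the reverse inclusion, which I would handle by splitting $\LieA \oplus \LieM$ into its two summands.

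For the $\LieA$ part I would adapt the rank-one argument of \cref{pro:BracketSpanLieA+LieM} to each simple root separately. Given $\alpha \in \Pi$, pick any nonzero $s_\alpha^- \in \LieG_{-\alpha}$ and set $s_\alpha^+ := \theta(s_\alpha^-) \in \LieG_\alpha$; the same $\theta$-symmetry argument shows $[s_\alpha^-, s_\alpha^+] \in \LieA$, and the Killing-form computation $B(t, [s_\alpha^-, s_\alpha^+]) \propto \alpha(t) B_\theta(s_\alpha^-, s_\alpha^-)$ for $t \in \LieA$ identifies it, up to a nonzero scalar, with the $\alpha$-dual element of $\LieA$. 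Since $\Pi$ is a basis of $\LieA^*$, the family $\{[s_\alpha^-, s_\alpha^+]\}_{\alpha \in \Pi}$ spans $\LieA$.

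For the $\LieM$ part I would argue by contradiction. Suppose $0 \neq m \in \LieM$ is $B$-orthogonal to $\sum_{\alpha \in \Pi}[\LieG_\alpha, \LieG_{-\alpha}]$. Non-degeneracy of the Killing pairing between $\LieG_\alpha$ and $\LieG_{-\alpha}$, combined with $[m, \LieG_\alpha] \subset \LieG_\alpha$ (because $m$ centralizes $\LieA$), forces $[m, \LieG_\alpha] = 0$ for every simple $\alpha$. Using the standard fact that $\LieN^-$ is generated as a Lie algebra by its simple restricted root spaces, iterated applications of the Jacobi identity yield $[m, \LieN^-] = 0$, and $\theta$-invariance gives $[m, \LieN^+] = 0$. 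Then the original form of Helgason's identity $\LieA \oplus \LieM = \sum_{\beta \in \Phi^+}[\LieG_\beta, \LieG_{-\beta}]$ lets me express any element of $\LieM$ as a sum of brackets $[x, y]$ with $x \in \LieG_\beta$, $y \in \LieG_{-\beta}$, $\beta \in \Phi^+$, and one more Jacobi expansion gives $[m, \LieM] = 0$. Combined with the automatic $[m, \LieA] = 0$, this forces $\ad_m = 0$ on $\LieG$, so $m \in Z(\LieG) = \{0\}$ by semisimplicity, a contradiction.

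The main obstacle I anticipate is verifying the generation claim that $\LieN^-$ is Lie-algebra generated by $\{\LieG_\alpha : \alpha \in \Pi\}$. By induction on the height of $\beta \in \Phi^+$, this reduces to the statement that for each non-simple positive root $\beta$ there exists a simple $\alpha \in \Pi$ with $\beta - \alpha \in \Phi^+$ and $[\LieG_{\beta - \alpha}, \LieG_\alpha] = \LieG_\beta$. The existence of $\alpha$ is elementary root-system theory; the bracket surjectivity is trivial in the split setting, where restricted root spaces are one-dimensional and the relevant structure constants are nonzero, but in the non-split case -- especially when the restricted root system is of non-reduced BC-type with multidimensional root spaces -- it requires invoking finer results from the structure theory of real semisimple Lie algebras. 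This is where the bulk of the work beyond the rank-one proof lies.
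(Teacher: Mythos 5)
Your proposal is correct, and its first half (the trivial containment plus the $\LieA$-part, where $[s_\alpha^-, \theta(s_\alpha^-)]$ is a nonzero multiple of the coroot of $\alpha$ and $\Pi$ is a basis of $\LieA^*$) is exactly the intended generalization of \cref{pro:BracketSpanLieA+LieM}. For the $\LieM$-part, however, you take a genuinely different route. The paper's proof repeats the rank-one argument: complexify, choose a Cartan subalgebra $\LieA \oplus \LieT$ through $m$, produce a restricted root vector $x$ with $[m, x] \neq 0$, descend to a \emph{simple} restricted root using the bracket surjectivity $[\LieG_\alpha, \LieG_\beta] = \LieG_{\alpha + \beta}$ from \cite[Proposition 3.1.13]{Sch17}, and conclude via the positivity computation $0 > -B_\theta([m, x], [m, x]) = B(m, [x, \theta[m, x]]) = 0$. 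You instead stay over $\R$: the non-degenerate $B$-pairing of $\LieG_\alpha$ with $\LieG_{-\alpha}$ together with $[\LieM, \LieG_{\pm\alpha}] \subset \LieG_{\pm\alpha}$ turns orthogonality directly into $[m, \LieG_{\pm\alpha}] = 0$ for $\alpha \in \Pi$; generation of each of $\LieN^\pm$ by the simple restricted root spaces it contains, plus $\theta$-invariance, gives $[m, \LieN^\pm] = 0$; and Helgason's original identity \cite{Hel70} with one Jacobi expansion kills $[m, \LieM]$, so $m \in Z(\LieG) = \{0\}$. Your version avoids the complexification and the $B_\theta$-trick but imports $[\LieN^+, \LieN^-] \supset \LieA \oplus \LieM$ as a black box, which the paper's argument in effect reproves (and strengthens in rank one). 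Both arguments consume exactly the same structural input, in the same generality: your height induction needs $[\LieG_{\beta - \alpha}, \LieG_\alpha] = \LieG_\beta$ for $\alpha$ simple and $\beta - \alpha$ an arbitrary positive restricted root, and the paper's descent from an arbitrary $\beta \in \Phi^+$ to a simple root needs precisely the same statement; so the ``remaining work'' you flag is what the paper disposes of by citing \cite[Proposition 3.1.13]{Sch17} rather than by argument. (Both your write-up and the paper's rank-one proof also implicitly use that $B$ is definite on $\LieM$ and that $\LieA$ already lies in the sum, in order to produce the orthogonal element $m$ inside $\LieM$; that reduction is routine.)
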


The proof of \cref{pro:HigherRankBracketSpanLieA+LieM} is similar to the proof of \cref{pro:BracketSpanLieA+LieM} using the fact that $[\LieG_\alpha, \LieG_\beta] \subset \LieG_{\alpha + \beta}$ by the Jacobi identity can actually be upgraded to $[\LieG_\alpha, \LieG_\beta] = \LieG_{\alpha + \beta}$ by \cite[Proposition 3.1.13]{Sch17}, for all simple roots $\alpha, \beta \in \Pi$.

Returning to the rank one setting, we are now in a position to prove the following key lemma using \cref{pro:BracketSpanLieA+LieM}.

\begin{lemma}
\label{lem:am_ProjectionOfAdjointImage}
There exist $n_1^-, n_2^-, \dotsc, n_{j_{\mathrm{m}}}^- \in N_1^-$ for some $j_{\mathrm{m}} \in \mathbb N$ and $\delta > 0$ such that if $\eta_1^-, \eta_2^-, \dotsc, \eta_{j_{\mathrm{m}}}^- \in N_1^-$ with $d_{N^-}(\eta_j^-, n_j^-) \leq \delta$ for all $1 \leq j \leq j_{\mathrm{m}}$, then
\begin{align*}
\sum_{j = 1}^{j_{\mathrm{m}}} \pi\left(\Ad_{\eta_j^-}\circ (dh_{n_j^-})_e(\LieSimple^+)\right) = \LieA \oplus \LieM.
\end{align*}
\end{lemma}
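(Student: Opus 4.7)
The plan is to expand the map $n^- \mapsto \pi\circ\Ad_{n^-}\circ(dh_{n^-})_e$ to first order at $n^- = e$, extract the algebraic surjectivity coming from \Cref{pro:BracketSpanLieA+LieM}, and then use Zariski density of $N_1^-$ in $N^-$ (which in turn comes from Zariski density of $\Gamma$ in $G$) to locate a usable tuple in $(N_1^-)^{j_{\mathrm{m}}}$. The $\delta$-perturbation then follows from classical openness of surjectivity.

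For the first-order computation, fix $x \in \LieSimple^+$ and $y = y_1 + y_2 \in \LieSimple^- \oplus [\LieSimple^-, \LieSimple^-]$, and set $n^-(t) = \exp(ty)$. Since $h_e = \Id_{N_1^+}$ by \Cref{lem:BrinPesinDerivativeImageIsAdjointProjection}, we have $(dh_{n^-(t)})_e(x) = x + t\xi + O(t^2)$ for some $\xi \in \LieN^+$ (after trivializing the tangent bundle of $N^+$ near $e$). Combined with $\Ad_{n^-(t)}(v) = v + t[y, v] + O(t^2)$, the facts $\pi|_{\LieN^+} = 0$ and $[y_2, x] \in \LieG_\alpha = \LieSimple^- \subset \ker\pi$ (by the root space grading) give
\[
\frac{d}{dt}\bigg|_{t=0} \pi\bigl(\Ad_{n^-(t)}\bigl((dh_{n^-(t)})_e(x)\bigr)\bigr) = [y_1, x].
\]
Setting $j_{\mathrm{m}} = \dim \LieSimple^-$ and taking $y_1^{(1)}, \ldots, y_1^{(j_{\mathrm{m}})}$ a basis of $\LieSimple^-$, \Cref{pro:BracketSpanLieA+LieM} makes $(v_j)_j \mapsto \sum_j [y_1^{(j)}, v_j]$ a surjection $\bigoplus_j \LieSimple^+ \twoheadrightarrow \LieA \oplus \LieM$. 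Since surjectivity is classically open, the tuple $(\exp(t y_1^{(j)}))_j$ lies, for $t > 0$ small, in the set
\[
G := \Bigl\{(m_j^-)_j \in (N^-)^{j_{\mathrm{m}}} : \textstyle\sum_j \pi\bigl(\Ad_{m_j^-}\bigl((dh_{m_j^-})_e(\LieSimple^+)\bigr)\bigr) = \LieA \oplus \LieM\Bigr\}.
\]

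To move inside $N_1^-$, observe that $G$ is Zariski open in $(N^-)^{j_{\mathrm{m}}}$: $\Ad$ is polynomial in its argument and $h_{n^-}$ is rational in $n^-$ via the Bruhat/Iwasawa decomposition on the open cell $N^- A M N^+ \subset G$, so the surjectivity condition is the non-vanishing of certain minors. By the standard stabilizer argument, Zariski density of $\Gamma$ in $G$ forces Zariski density of $\limitset$ in $\partial_\infty \mathbb{H}_\mathbb{K}^n \cong G/P$; using minimality of the $\Gamma$-action on $\limitset$ this transfers via $F$ to Zariski density of $N_1^-$ in $N^-$. Thus $(N_1^-)^{j_{\mathrm{m}}}$ meets $G$, and we choose any tuple in the intersection as $(n_1^-, \ldots, n_{j_{\mathrm{m}}}^-)$. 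For the perturbation, fix this tuple; the map $(\eta_j^-)_j \mapsto \sum_j \pi\bigl(\Ad_{\eta_j^-}((dh_{n_j^-})_e(\LieSimple^+))\bigr)$ is continuous in the classical topology (only $\Ad$ varies with the parameters) and surjectivity is open, so any sufficiently small $\delta > 0$ works, in particular restricting the $\eta_j^-$ to $N_1^-$.

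The main obstacle is the density step: while Zariski density of $\limitset$ in $\partial_\infty \mathbb{H}_\mathbb{K}^n$ is classical, $N_1^-$ is only a fractal \emph{local} piece of $\limitset \cap N^-[P]$, so one must globalize carefully via $\Gamma$-translates and minimality in order to deduce Zariski density of $N_1^-$ itself in $N^-$, rather than merely of the full intersection.
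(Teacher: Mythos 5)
Your first-order computation is correct and is essentially the same algebraic input the paper uses: for $y = y_1 + y_2 \in \LieSimple^- \oplus [\LieSimple^-, \LieSimple^-]$ and $x \in \LieSimple^+$ one indeed gets $\pi\bigl(\Ad_{\exp(y)}(x)\bigr) = \pi[y, x] = [y_1, x]$, the $(dh)$-term dies under $\pi$, and \cref{pro:BracketSpanLieA+LieM} then shows that the spans over \emph{all} of $N^-$ fill $\LieA \oplus \LieM$ (note, though, that your ``openness of surjectivity'' step at $t = 0$ is stated loosely: at $t=0$ the tuple is $(e,\dotsc,e)$ and the map $\pi \circ \Ad \circ dh$ vanishes identically, so you must first divide by $t$ and apply openness to the rescaled family whose limit is $(v_j)_j \mapsto \sum_j [y_1^{(j)}, v_j]$; this is fixable). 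The genuine gap is exactly the step you flag yourself: you need the chosen tuple to lie in $(N_1^-)^{j_{\mathrm{m}}}$, and for that you invoke (i) Zariski openness of the good locus, which requires knowing that $n^- \mapsto (dh_{n^-})_e$ is a rational/algebraic map (plausible, via identifying $h_{n^-}$ with the Bruhat coordinate change on the open cell, but nowhere established -- the paper only ever uses smoothness of $h$), and (ii) Zariski density of $N_1^-$ in $N^-$. Point (ii) is a statement about a small \emph{local} piece of the fractal limit set, and ``minimality transfers Zariski density of $\limitset$ to $N_1^-$'' is not an argument: minimality gives dense orbits in $\limitset$, not that translates of algebraic sets containing the local piece can be assembled into one proper $\Gamma$-invariant subvariety. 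A correct proof of (ii) exists (e.g.\ expand the local piece by powers of a loxodromic element with attracting point in it and use Noetherianity to produce an invariant proper subvariety containing $\limitset$, contradicting global Zariski density), but you neither supply it nor cite a result covering it, so as written the proof does not close.

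For comparison, the paper avoids both unproved inputs by arguing contrapositively at the level of $N_1^-$ itself: if $V = \sum_{n^- \in N_1^-} \pi\bigl(\Ad_{n^-} \circ (dh_{n^-})_e(\LieSimple^+)\bigr)$ were proper, one picks a linear functional $L$ annihilating $V$, forms the smooth function $\tilde{L}(n^-) = L\bigl(\pi(\Ad_{n^-} (dh_{n^-})_e(\hat{n}^+))\bigr)$ on a classical neighborhood of $e$ in $N^-$, and uses the same bracket computation plus \cref{pro:BracketSpanLieA+LieM} to see $(d\tilde{L})_e \neq 0$; the implicit function theorem then places $N_1^-$ (hence a relatively open piece of $\limitset$) inside a smooth submanifold of strictly smaller dimension, contradicting \cite[Proposition 3.12]{Win15} -- the same non-degeneracy input used in \cref{pro:NonConcentrationProperty} -- with no algebraicity of $h$ and no Zariski density of the local piece required. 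If you want to salvage your route, either prove (i) and (ii) or replace them by this implicit-function-theorem reduction; the finite selection of $n_1^-, \dotsc, n_{j_{\mathrm{m}}}^-$ and the $\delta$-perturbation (where only $\Ad_{\eta_j^-}$ varies) then go through exactly as in your last paragraph.
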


\begin{proof}
Note that $[\LieN^-, \LieSimple^+] \subset \LieA \oplus \LieM \oplus \LieSimple^-$. Using the formula
\begin{align*}
\Ad_{e^{n^-}}(n^+) &= e^{\ad_{n^-}}(n^+) = \sum_{j = 0}^\infty \frac{1}{j!}(\ad_{n^-})^j(n^+) \\
&= n^+ + [n^-, n^+] + \frac{1}{2!}[n^-, [n^-, n^+]] + \frac{1}{3!}[n^-, [n^-, [n^-, n^+]]] + \dotsb,
\end{align*}
we have
\begin{align*}
\pi(\Ad_{e^{n^-}}(n^+)) = \pi[n^-,n^+] \qquad \text{for all $n^- \in \LieN^-$ and $n^+ \in \LieSimple^+$}.
\end{align*}
It follows from \cref{pro:BracketSpanLieA+LieM} that
\begin{align*}
\sum_{n^- \in N^-} \pi(\Ad_{n^-}(\LieSimple^+)) = \LieA \oplus \LieM.
\end{align*}

To prove the lemma, it suffices to show that
\begin{align*}
V := \sum_{n^- \in N_1^-} \pi(\Ad_{n^-}\circ (dh_{n^-})_e(\LieSimple^+)) = \LieA \oplus \LieM.
\end{align*}
Suppose for the sake of contradiction that $V \subset \LieA \oplus \LieM$ is a proper subspace. Let $L: \LieA \oplus \LieM \to \mathbb R$ be a linear map with $\ker(L) = V$. Fix $\hat{n}^- \in \LieN^-$ and $\hat{n}^+ \in \LieSimple^+$ such that $\pi(\Ad_{e^{\hat{n}^-}}(\hat{n}^+)) =\pi[\hat{n}^-,\hat{n}^+] \notin V$.

Without loss of generality, we may assume that $\mathsf{F}(\tilde{z}_1) = e \in G$. Let $O^- \subset N_{1, \epsilon_0}^-$ be an open neighborhood of $e \in N_{1, \epsilon_0}^-$. Let $H: O^- \to \LieN^+$ be a smooth map and $\tilde{L} : O^- \to \R$ be a nonzero smooth map defined by
\begin{align*}
\begin{split}
H(n^-) &= (dh_{n^-})_e(\hat{n}^+), \\
\tilde{L}(n^-) &= (L \circ \pi \circ \Ad_{n^-} \circ H)(n^-),
\end{split}
\qquad
\text{for all $n^- \in O^-$}.
\end{align*}
Note that $N_1^- \subset \tilde{L}^{-1}(0)$. Using the product rule, we calculate that
\begin{align*}
(d\tilde{L})_e(\tilde{n}^-) = L(\pi([\tilde{n}^-,\hat{n}^+] + (dH)_e(\tilde{n}^-))) = L(\pi[\tilde{n}^-,\hat{n}^+]) \qquad \text{for all $\tilde{n}^- \in \LieSimple^-$}.
\end{align*}
Thus, using the decomposition $\hat{n}^- = \hat{n}_{\LieSimple^-}^- + \hat{n}_{[\LieSimple^-, \LieSimple^-]}^- \in \LieSimple^- \oplus [\LieSimple^-, \LieSimple^-]$, we find that
\begin{align*}
d\tilde{L}_{e}(\hat{n}_{\LieSimple^-}^-) = L(\pi[\hat{n}_{\LieSimple^-}^-,\hat{n}^+]) = L(\pi[\hat{n}^-,\hat{n}^+]) \neq 0.
\end{align*}
Hence, using the implicit function theorem and shrinking $O^-$ if necessary, $\tilde{L}^{-1}(0) \subset N^-$ is a smooth submanifold of strictly smaller dimension. Since $N_1^- \cap O^- \subset \tilde{L}^{-1}(0)$, shrinking $O^-$ if necessary and using the diffeomorphism $N^- \to N^-e^-$, we conclude that $\limitset \cap O$ for some open subset $O \subset \partial_\infty\mathbb{H}_\mathbb{K}^n$ is contained in a smooth submanifold of $\partial_\infty\mathbb{H}_\mathbb{K}^n$ of strictly smaller dimension which is a contradiction by \cite[Proposition 3.12]{Win15} since $\Gamma < G$ is Zariski dense.
\end{proof}

With \cref{lem:am_ProjectionOfAdjointImage} in hand, the proof of \cref{pro:FrameFlowLNIC} is now similar to that of \cite[Proposition 6.5]{SW21}. Let $z_1 \in \tilde{R}_1$ be the center. Define the isometry $\Psi_0: N_{1, \epsilon_0}^+ \to W_{\epsilon_0}^{\mathrm{su}}(z_1)$ such that $F(\Psi_0(n^+)) = F(z_1)n^+$ for all $n^+ \in N_{1, \epsilon_0}^+$. For all $n^+ \in N_{1, \epsilon_0/2}^+$, also define the isometry $\iota(n^+): \tilde{U}_1 \to W_{\epsilon_0}^{\mathrm{su}}(z_1)$ by
\begin{align*}
\iota(n^+)(u) = \Psi_0(n^+\Psi_0^{-1}(u)).
\end{align*}
Define the diffeomorphism
\begin{align*}
\Psi = \Psi_0|_{N_1^+} \circ \exp: \log\bigl(N_{1, \epsilon_0}^+\bigr) \to W_{\epsilon_0}^{\mathrm{su}}(z_1).
\end{align*}
Denote $\check{u} = \Psi^{-1}(u)$ for all $u \in \tilde{U}_1$. 

\begin{proposition}[LNIC]
\label{pro:FrameFlowLNIC}
There exist $\varepsilon_2 \in (0, 1)$, $m_0 \in \mathbb N$, $j_{\mathrm{m}} \in \mathbb N$, and an open neighborhood $\mathcal{U} \subset \tilde{U}_1$ of the center $z_1 \in \tilde{R}_1$ with $\mathcal{U} \cap \Omega \subset U_1$ such that for all $m \geq m_0$, there exist sections $v_j = \sigma^{-\alpha_j}: \tilde{U}_1 \to \tilde{U}_{\alpha_{j, 0}}$ for some admissible sequences $\alpha_j = (\alpha_{j, 0}, \alpha_{j, 1}, \dotsc, \alpha_{j, m - 1}, 1)$ for all integers $0 \leq j \leq j_{\mathrm{m}}$ such that for all $u \in \mathcal{U}$ and $\omega \in \LieA \oplus \LieM$ with $\|\omega\| = 1$, there exist $1 \leq j \leq j_{\mathrm{m}}$ and $Z := (d\Psi)_{\check{u}}(Z') \in (d\Psi)_{\check{u}}(\LieSimple^+)$ with $\|Z'\| = 1$ such that
\begin{align*}
|\langle (d\BP_{j, u})_u(Z), \omega \rangle| \geq \varepsilon_2
\end{align*}
where we define $\BP_j: \tilde{U}_1 \times \tilde{U}_1 \to AM$ by
\begin{align*}
\BP_j(u, u') = \Phi^{\alpha_0}(v_0(u))^{-1} \Phi^{\alpha_0}(v_0(u')) \Phi^{\alpha_j}(v_j(u'))^{-1} \Phi^{\alpha_j}(v_j(u))
\end{align*}
and we denote $\BP_{j, u} = \BP_j(u, \cdot)$ for all $u, u' \in \tilde{U}_1$ and $1 \leq j \leq j_{\mathrm{m}}$. Moreover, $v_0(\mathcal{U}), v_1(\mathcal{U}), \dotsc, v_{j_{\mathrm{m}}}(\mathcal{U})$ are mutually disjoint.
\end{proposition}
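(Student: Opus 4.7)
The plan is to follow the strategy of \cite[Proposition 6.5]{SW21} with $\LieSimple^+$ in place of $\LieN^+$. The essential new ingredient is \cref{lem:am_ProjectionOfAdjointImage}, which itself rests on the strengthened Helgason identity \cref{pro:BracketSpanLieA+LieM}; once it is in hand, the rest of the argument transfers with only cosmetic changes.

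\textbf{Choice of cylinders.} Apply \cref{lem:am_ProjectionOfAdjointImage} to obtain $n_1^-, \dotsc, n_{j_{\mathrm{m}}}^- \in N_1^-$ and $\delta > 0$ for which the spanning conclusion survives $\delta$-perturbations, and set $n_0^- = e \in N_1^-$ as a reference. Since $\Gamma$ is Zariski dense and convex cocompact, topological mixing of the Markov shift ensures that $\{n_\alpha : \alpha \text{ admissible, ending in } 1\}$ accumulates at each $n_j^-$ for $j \geq 1$ and also at $n_0^- = e$, the latter because iterated returns to $R_1$ approach $z_1 \in S_1 \cap \Omega$. Hence for any $m \geq m_0$ sufficiently large we may select admissible sequences $\alpha_j = (\alpha_{j,0}, \dotsc, \alpha_{j,m-1}, 1)$ of length $m$ with $d_{N^-}(n_{\alpha_j}, n_j^-) < \delta$ for all $0 \leq j \leq j_{\mathrm{m}}$ and pairwise distinct initial letters $\alpha_{j,0}$, so that the images $v_0(\mathcal{U}), \dotsc, v_{j_{\mathrm{m}}}(\mathcal{U})$ are pairwise disjoint after shrinking $\mathcal{U} \ni z_1$.

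\textbf{Derivative of $\BP_j$.} Writing $A_k(u) = \Phi^{\alpha_k}(v_k(u))$ and using $\BP_j(u, u) = e$, the Leibniz rule yields, after left-translating to $T_e(AM) \cong \LieA \oplus \LieM$,
\begin{align*}
(d\BP_{j,u})_u(X) = A_0(u)^{-1}(dA_0)_u(X) - A_j(u)^{-1}(dA_j)_u(X).
\end{align*}
At $u = z_1$, \cref{lem:BrinPesinInTermsOfHolonomy} shows that $A_k(z_1)^{-1} A_k(\Psi_0(n^+)) = \Xi(n^+, n_{\alpha_k})$, so its differential at $n^+ = e$ is $(d\Xi_{n_{\alpha_k}})_e$; by \cref{lem:BrinPesinDerivativeImageIsAdjointProjection} this equals $\pi \circ \Ad_{n_{\alpha_k}} \circ (dh_{n_{\alpha_k}})_e$. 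Defining $F_k = \pi \circ \Ad_{n_{\alpha_k}} \circ (dh_{n_{\alpha_k}})_e|_{\LieSimple^+}$, one obtains for $Y \in \LieSimple^+$ that
\begin{align*}
(d\BP_{j, z_1})_{z_1}\bigl((d\Psi)_0(Y)\bigr) = F_0(Y) - F_j(Y),
\end{align*}
and continuity in $u$ extends this identity up to arbitrarily small error throughout $\mathcal{U}$.

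\textbf{Uniform estimate and main obstacle.} The delicate step is upgrading the spanning $\sum_{j=1}^{j_{\mathrm{m}}} \im(F_j) = \LieA \oplus \LieM$ (guaranteed by the choice of $n_j^-$) to $\sum_{j=1}^{j_{\mathrm{m}}} \im(F_j - F_0) = \LieA \oplus \LieM$. Here the choice $n_{\alpha_0} \approx e$ is crucial: since $\Ad_e(\LieSimple^+) = \LieSimple^+ \subset \LieN^+ = \ker\pi$, the map $F_0$ is small, so spanning of differences reduces to spanning of the $F_j$ themselves (the latter being stable under small perturbations). A standard compactness argument on the unit sphere of $\LieA \oplus \LieM$ then produces $\varepsilon_2 > 0$ such that for every unit $\omega$ there exist $1 \leq j \leq j_{\mathrm{m}}$ and unit $Y \in \LieSimple^+$ with $|\langle F_0(Y) - F_j(Y), \omega\rangle| \geq 2\varepsilon_2$, and continuity of the derivative propagates the bound to all $u \in \mathcal{U}$. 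The main obstacle is thus the realization of $n_{\alpha_0} \approx e$ as a cylinder endpoint of prescribed long length, which amounts to a near-periodic strong stable return of the Poincar\'{e} map to $z_1$; this is granted by topological mixing of the Markov shift together with Zariski density of $\Gamma$.
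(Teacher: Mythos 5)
Your overall route is the same as the paper's, which simply observes that once \cref{lem:am_ProjectionOfAdjointImage} is available the argument of \cite[Proposition 6.5]{SW21} goes through with $\LieSimple^+$ in place of $\LieN^+$: you correctly compute the left-logarithmic derivative of $\BP_{j,u}$ as a difference of the maps from \cref{lem:BrinPesinInTermsOfHolonomy,lem:BrinPesinDerivativeImageIsAdjointProjection}, and the device of taking the reference word $\alpha_0$ with $n_{\alpha_0}$ near $e$ (so that the reference term is negligible, since $(d\Xi_{n^-})_e \to 0$ as $n^- \to e$) is indeed the right mechanism. Two steps, however, are not justified as written. First, the passage from $u = z_1$ to all $u \in \mathcal{U}$ and the extraction of $\varepsilon_2$ must be \emph{uniform in $m$}: your compactness argument is applied to the specific maps $F_j$, which depend on $m$ through $n_{\alpha_j}$, and "continuity of the derivative" of an $m$-dependent map only yields an $m$-dependent neighborhood and an $m$-dependent constant, whereas $\mathcal{U}$ and $\varepsilon_2$ are fixed before $m$ in the statement. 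The fix is exactly what the $\delta$-robust formulation of \cref{lem:am_ProjectionOfAdjointImage} is designed for: run the compactness argument over $\omega$ in the unit sphere \emph{and} over all tuples $(\eta_1^-,\dotsc,\eta_{j_{\mathrm{m}}}^-)$ in the closed $\delta$-balls, and observe that both changing $m$ (which moves $n_{\alpha_j}$ within the $\delta$-ball around $n_j^-$) and moving the base point from $z_1$ to $u \in \mathcal{U}$ (which, via the cocycle structure of $\Xi$ and hyperbolicity, replaces the relevant stable elements by nearby ones, with displacement controlled by $\diam(\mathcal{U})$ independently of $m$) keeps the data inside those balls. Without making this connection the key uniformity claim is a genuine gap.

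Second, your disjointness argument is flawed: you cannot in general prescribe pairwise distinct initial letters $\alpha_{j,0}$, both because the alphabet may have fewer than $j_{\mathrm{m}}+1$ symbols and because the initial segment of $\alpha_j$ is constrained by admissibility and by the requirement $d_{N^-}(n_{\alpha_j}, n_j^-) < \delta$ with $\len(\alpha_j) = m$ (the last letters are dictated by the truncated backward itinerary of the target, and the subshift need not allow free choice of predecessors). The correct and standard reason is that, after shrinking $\delta$ so that $e, n_1^-, \dotsc, n_{j_{\mathrm{m}}}^-$ are $2\delta$-separated, the words $\alpha_0, \dotsc, \alpha_{j_{\mathrm{m}}}$ are forced to be pairwise distinct, and distinct admissible words of the same length give disjoint cylinders, whence $v_0(\mathcal{U}), \dotsc, v_{j_{\mathrm{m}}}(\mathcal{U})$ are disjoint (with the usual minor care for the Ruelle extensions $\tilde{\mathtt{C}}[\alpha_j]$ versus the honest cylinders). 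Relatedly, the appeal to topological mixing is unnecessary: approximating $n_j^-$ and $e$ by $n_\alpha$ with $\len(\alpha) = m$ only uses the Markov structure and hyperbolicity (truncation of backward itineraries, admissibility of subwords), which is how the paper's source argument proceeds.
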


We refer to \cite[Proposition 6.5]{SW21} for the details of the proof and prefer to focus on the idea of the proof and emphasize the differences. We use the same notation as in \cref{pro:FrameFlowLNIC}. Firstly, for all $1 \leq j \leq j_{\mathrm{m}}$, we can relate $\BP_j$ to $\Xi$ via the following observation obtained using definitions and \cref{lem:BrinPesinInTermsOfHolonomy}:
\begin{align}
\label{eqn:BP_and_Xi_Relation}
\BP_j(u, u') = \Xi_{n_{\alpha_0}}(\varphi(u))^{-1} \Xi_{n_{\alpha_0}}(\varphi(u')) \Xi_{n_{\alpha_j}}(\varphi(u'))^{-1} \Xi_{n_{\alpha_j}}(\varphi(u))
\end{align}
for all $u, u' \in \tilde{U}_1$, where $\varphi: \tilde{U}_1 \to N_1^+$ is a diffeomorphism such that
\begin{align*}
F(u) = F(z_1) \varphi(u) \qquad \text{for all $u \in \tilde{U}_1$}.
\end{align*}
Taking $u = z_1$ in \cref{eqn:BP_and_Xi_Relation}, the first and the last factors are $e$. Taking the differential at $u' = z_1$ and evaluating at any $Z := (d\Psi)_0(Z') \in (d\Psi)_0(\LieSimple^+)$ with $\|Z'\| = 1$ (note that $\check{z}_1 = 0$) gives
\begin{align*}
d\bigl(\BP_j(z_1, \cdot)\bigr)_{z_1}(Z) = \bigl(d\Xi_{n_{\alpha_0}}\bigr)_e((d\varphi)_{z_1}(Z)) - \bigl(d\Xi_{n_{\alpha_j}}\bigr)_e((d\varphi)_{z_1}(Z)).
\end{align*}
Observe that we \emph{do not} take $Z' \in \LieN^+$. Write $W = (d\varphi)_{z_1}(Z)$. Applying \cref{lem:BrinPesinDerivativeImageIsAdjointProjection}, taking inner products with any $\omega \in \LieA \oplus \LieM$ with $\|\omega\| = 1$, and using the reverse triangle inequality gives
\begin{align*}
\Bigl|\Bigl\langle d\bigl(\BP_j(z_1, \cdot)\bigr)_{z_1}(Z), \omega\Bigr\rangle\Bigr| \geq{}&\bigl|\bigl\langle\pi\bigl(\Ad_{n_{\alpha_j}}\bigl(\bigl(dh_{n_{\alpha_j}}\bigr)_e(W)\bigr)\bigr), \omega\bigr\rangle\bigr| \\
{}&- \bigl|\bigl\langle \pi\bigl(\Ad_{n_{\alpha_0}}\bigl(\bigl(dh_{n_{\alpha_0}}\bigr)_e(W)\bigr)\bigr), \omega\bigr\rangle\bigr|.
\end{align*}
We then construct sequences $\alpha_j$ and $\alpha_0$, and find a vector $W$, using \cref{lem:am_ProjectionOfAdjointImage} and the topological mixing property of the geodesic flow, so that the first term is large and the second term is small to bound the difference from below by a positive constant. We can actually ensure that $n_{\alpha_0}$ is close to $e \in N_1^-$ and $n_{\alpha_j}$ is close to $n_j^- \in N_1^-$ from \cref{lem:am_ProjectionOfAdjointImage}. The crucial point to observe here is that this step works because, in contrast to \cite[Lemma 6.4]{SW21}, we are able prove \cref{lem:am_ProjectionOfAdjointImage} with $\LieSimple^+$ instead of $\LieN^+$. Finally, we extend the bound using smoothness of $\BP_j$ and compactness of the unit ball centered at $0$ in $\LieA \oplus \LieM$.

\subsection{Non-concentration property}
We first introduce several notations. Without loss of generality, we assume that $z_1 = \Gamma eM \in \Gamma \backslash G/M$ so that $\tilde{z}_1^\pm = e^\pm$ and $\limitset \setminus \{e^-\} \subset N^+e^+$. Abusing notation, we denote $\exp: \mathfrak{n}^+ \to \partial_\infty\mathbb{H}_\mathbb{K}^n \cong G/P$. Define $\loglimitset = \log(\limitset \cap N^+e^+)$. We also use the notation $n_x^+ = \exp(x) \in N^+$ for all $x \in \LieN^+$. The left translation action of $N^+$ on itself induces an action on $\LieN^+$ for which we simply write $n_x^+ \cdot y = \log(n_x^+\exp(y))$ for all $x, y \in \LieN^+$. Now, $N^+$ has a left $N^+$-invariant Carnot--Carath\'{e}odory metric (see for example \cite[\S\ 3]{Bou18}). Pulling back via $\exp$, we instead endow $\LieN^+$ with the Carnot--Carath\'{e}odory metric which we denote by $d_{\mathrm{CC}}$. We denote by $B_\epsilon^{\mathrm{CC}}(x) \subset \LieN^+$ the open Carnot--Carath\'{e}odory ball of radius $\epsilon > 0$ centered at $x \in \LieN^+$ which has the properties
\begin{align*}
n_y^+ \cdot B_\epsilon^{\mathrm{CC}}(x) &= B_\epsilon^{\mathrm{CC}}(n_y^+ \cdot x), & \Ad_{a_t}(B_\epsilon^{\mathrm{CC}}(0)) &= B_{e^{-t}\epsilon}^{\mathrm{CC}}(0),
\end{align*}
for all $y \in \LieN^+$ and $t \in \R$. Define the convenient inner product $\langle \cdot, \cdot\rangle_\perp$ on $\LieN^+ = \LieSimple^+ \oplus [\LieSimple^+, \LieSimple^+]$ such that the decomposition is orthogonal and the restrictions to $\LieSimple^+$ and $[\LieSimple^+, \LieSimple^+]$ both coincide with $\langle\cdot, \cdot\rangle$. Denote by $\|\cdot\|_\perp$ the corresponding norm. For all $\varkappa \in [0, 1]$, denote
\begin{align}
\label{eqn:RingLikeSubset}
\mathfrak{R}_\varkappa(\LieSimple^+) &= \Bigl\{w \in \LieN^+: \|w\|_\perp = \|w\| = 1, \sup_{w' \in \LieSimple^+, \|w'\| = 1} |\langle w, w'\rangle_\perp| \geq \varkappa\Bigr\}.
\end{align}
Then $\{w \in \LieSimple^+: \|w\| = 1\} \subset \mathfrak{R}_\varkappa(\LieSimple^+) \subset \{w \in \LieN^+: \|w\| = 1\}$ and the two containments are equalities when $\varkappa = 1$ and $\varkappa = 0$ respectively. The following proposition is the required NCP.

\begin{proposition}[NCP]
\label{pro:NonConcentrationProperty}
Let $\varkappa \in (0, 1]$. There exists $\delta \in (0, 1)$ such that for all $x \in \loglimitset \cap \overline{B_1^{\mathrm{CC}}(0)}$, $\epsilon \in (0, 1)$, and $w \in \mathfrak{R}_\varkappa(\LieSimple^+)$, there exists $y \in \loglimitset \cap \bigl(B_\epsilon^{\mathrm{CC}}(x) \setminus B_{\epsilon\delta}^{\mathrm{CC}}(x)\bigr)$ such that $|\langle (n_x^+)^{-1} \cdot y, w \rangle| \geq \epsilon \delta$.
\end{proposition}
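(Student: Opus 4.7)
The plan is to argue by contradiction using compactness, adapting the real hyperbolic NCP of \cite{SW21} to the CC-metric and to handle the anisotropic scaling of $\Ad_{a_t}$ on $\LieN^+ = \LieSimple^+ \oplus [\LieSimple^+, \LieSimple^+]$. Suppose the proposition fails for some $\varkappa \in (0,1]$, yielding sequences $\delta_k \to 0^+$, $\epsilon_k \in (0,1)$, $x_k \in \loglimitset \cap \overline{B_1^{\mathrm{CC}}(0)}$, and $w_k \in \mathfrak{R}_\varkappa(\LieSimple^+)$ witnessing the failure. The first move is to normalize: translate $x_k$ to $0$ via the left $N^+$-action and rescale by $\Ad_{a_{t_k}}$ with $t_k := \log\epsilon_k < 0$, so that $B_{\epsilon_k}^{\mathrm{CC}}(0)$ is carried onto $B_1^{\mathrm{CC}}(0)$; call the resulting rescaled image of $\loglimitset \cap \overline{B_{\epsilon_k}^{\mathrm{CC}}(x_k)}$ by $\Lambda_k \subset \overline{B_1^{\mathrm{CC}}(0)}$.

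Decomposing $y = y_s + y_r \in \Lambda_k$ and $w_k = w_{k,s} + w_{k,r}$ along the splitting, the anisotropic scaling identifies the preimage of $y$ with $\epsilon_k y_s + \epsilon_k^2 y_r$, so (after passing to the comparable inner product $\langle\cdot,\cdot\rangle_\perp$) the failure hypothesis rewrites as
\begin{align*}
\bigl|\langle y_s, w_{k,s}\rangle_\perp + \epsilon_k \langle y_r, w_{k,r}\rangle_\perp\bigr| \lesssim \delta_k \qquad \text{for all } y \in \Lambda_k \setminus B_{\delta_k}^{\mathrm{CC}}(0).
\end{align*}
Since $\|y_r\|$ is uniformly bounded on $\Lambda_k$, the second summand is $O(\epsilon_k)$ and drops out in the limit. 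Passing to subsequences and taking Hausdorff limits, $w_k \to w \in \mathfrak{R}_\varkappa(\LieSimple^+)$ (a closed subset of the unit sphere) and $\Lambda_k \to \Lambda_\infty \subset \overline{B_1^{\mathrm{CC}}(0)}$, with the limiting constraint $\langle y_s, w_s\rangle_\perp = 0$ for every $y \in \Lambda_\infty \setminus \{0\}$. Since $w_s \neq 0$, this forces $\Lambda_\infty \setminus \{0\}$ into the proper linear subspace $L := \{y \in \LieN^+ : \langle y_s, w_s\rangle_\perp = 0\}$.

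The main obstacle is then closing the contradiction, for which I must ensure $\Lambda_\infty$ is nontrivial away from $0$ and cannot be confined to $L$. The first point I would achieve via the doubling property of $\mu^{\mathrm{PS}}_o$ with respect to the CC-metric --- a consequence of convex cocompactness and the CC-conformality of Patterson--Sullivan --- which gives a uniform lower bound on the mass of each annulus $B_{\epsilon_k}^{\mathrm{CC}}(x_k) \setminus B_{\epsilon_k\delta_k}^{\mathrm{CC}}(x_k)$ for $\delta_k$ not too small, forcing $\Lambda_\infty$ to carry the support of a nontrivial rescaled PS-type limit measure. The second point I would handle in the spirit of the closing argument of \cref{lem:am_ProjectionOfAdjointImage}: after moving $x_\infty := \lim_k x_k$ to $0$ by a suitable conjugation, \cite[Proposition 3.12]{Win15} rules out the support of $\mu^{\mathrm{PS}}_o$ lying in a proper smooth submanifold of $\partial_\infty\mathbb{H}_\mathbb{K}^n$; transferring this non-concentration through the $\Ad_{a_t}$-rescaling then shows $\Lambda_\infty$ cannot sit inside $L$, producing the required contradiction.
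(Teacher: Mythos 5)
Your skeleton (contradiction, translation of $x_k$ to the origin, anisotropic rescaling by $\Ad_{a_{t_k}}$ so that the $[\LieSimple^+,\LieSimple^+]$-component of $w$ is suppressed and only the hyperplane $\{\langle y, L^*(w)\rangle_\perp = 0\}$ survives in the limit) matches the paper's computation, but the way you close the argument has a genuine gap. After rescaling you only retain an abstract Hausdorff limit $\Lambda_\infty$ of the zoomed-in sets, and your final step --- ``transferring this non-concentration through the $\Ad_{a_t}$-rescaling'' --- is exactly what is not available. Winter's \cite[Proposition 3.12]{Win15} is a qualitative statement about $\limitset$ itself: at any fixed location and scale the limit set is not contained in a proper smooth submanifold, but it gives no lower bound, uniform in the point $x_k$ and the scale $\epsilon_k \to 0$, on how far $\limitset$ spreads away from a hyperplane. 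Without such uniformity, the blow-up limit $\Lambda_\infty$ can perfectly well collapse into the hyperplane $L$ even though no finite-scale piece of $\limitset$ lies in any hyperplane; indeed, a uniform-over-scales quantitative non-concentration is precisely the content of the proposition you are trying to prove, so invoking it at this point is circular. The doubling/shadow-lemma input you propose only guarantees that the annuli contain limit points (so $\Lambda_\infty \setminus \{0\} \neq \varnothing$); it does not control the direction in which those points spread.

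The missing idea is the renormalization via convex cocompactness that makes the blow-up an honest piece of $\limitset$ again. Since $x_j \in \loglimitset$, the element $n_{x_j}^+$ satisfies $(n_{x_j}^+)^+ = \exp(x_j) \in \limitset$ and $(n_{x_j}^+)^- = e^- \in \limitset$, so the orbit $n_{x_j}^+ a_t$ stays in $\Gamma\Omega_0$ for a fixed compact $\Omega_0 \subset G$; writing $n_{x_j}^+ a_{t_j} = \gamma_j g_j$ with $t_j = -\log\epsilon_j$, $\gamma_j \in \Gamma$, $g_j \in \Omega_0$, the rescaling map $g_j a_{-t_j}(n_{x_j}^+)^{-1} = \gamma_j^{-1}$ \emph{preserves} $\limitset$. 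Hence the zoomed-in picture at scale $\epsilon_j$ around $x_j$ is carried to $\limitset \cap g_j\cdot\exp\bigl(B_1^{\mathrm{CC}}(0)\setminus B_{1/j}^{\mathrm{CC}}(0)\bigr)$ with $g_j$ ranging in a compact set; after passing to subsequences $g_j \to g$ and $w_j \to w$, the limiting containment becomes a statement that an actual unit-scale piece $\limitset \cap g\cdot\exp\bigl(B_1^{\mathrm{CC}}(0)\bigr)$ lies in a proper smooth submanifold, to which Winter's proposition applies directly and yields the contradiction. You should also record the preliminary reduction (compactness of $\overline{B_1^{\mathrm{CC}}(0)}$ plus Zariski density) that handles $\epsilon$ bounded away from $0$, which is what justifies assuming $\epsilon_k \to 0$ before your ``$O(\epsilon_k)$ drops out'' step.
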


\begin{proof}
Let $\varkappa \in (0, 1]$. Observe that using compactness of $\overline{B_1^{\mathrm{CC}}(0)}$ and \cite[Proposition 3.12]{Win15} since $\Gamma < G$ is Zariski dense, the proposition easily holds with the restriction $\epsilon \in [\epsilon', 1)$ for some $\epsilon' > 0$. Now, suppose for the sake of contradiction that the proposition is false. Then for all $j \in \mathbb N$, taking $\delta_j = \frac{1}{j}$, there exist $x_j \in \loglimitset \cap \overline{B_1^{\mathrm{CC}}(0)}$, $\epsilon_j \in (0, 1)$, and $w_j \in \mathfrak{R}_\varkappa(\LieSimple^+)$ such that $|\langle (n_{x_j}^+)^{-1} \cdot y, w_j \rangle| \leq \frac{\epsilon_j}{j}$ for all $y \in \loglimitset \cap \bigl(B_{\epsilon_j}^{\mathrm{CC}}(x_j) \setminus B_{\epsilon_j/j}^{\mathrm{CC}}(x_j)\bigr)$. This can be written as
\begin{align}
\label{eqn:IfLemmaIsFalse}
\limitset \cap \exp\bigl(B_{\epsilon_j}^{\mathrm{CC}}(x_j) \setminus B_{\epsilon_j/j}^{\mathrm{CC}}(x_j)\bigr) \subset \exp\left\{y \in \LieN^+: |\langle (n_{x_j}^+)^{-1} \cdot y, w_j \rangle| \leq \frac{\epsilon_j}{j}\right\}
\end{align}
for all $j \in \mathbb N$. Based on the initial observation, without loss of generality, we can pass to subsequences so that $\lim_{j \to \infty} \epsilon_j = 0$.

We will use the self-similarity of the fractal set $\limitset$. For all $j \in \mathbb N$, we have $n_{x_j}^+ \in B_1(e) \subset G$ with $\bigl(n_{x_j}^+\bigr)^+ = \exp(x_j) \in \limitset$ and $\bigl(n_{x_j}^+\bigr)^- = e^- \in \limitset$. By convex cocompactness, there exists a compact subset $\Omega_0 \subset G$ such that $n_{x_j}^+ a_t \in \Gamma \Omega_0$ for all $t \in \mathbb R$ and $j \in \mathbb N$. Hence, for all $j \in \mathbb N$, setting $t_j = -\log(\epsilon_j)$, there exist $\gamma_j \in \Gamma$ and $g_j \in \Omega_0$ such that $n_{x_j}^+ a_{t_j} = \gamma_j g_j$. Then, for all $j \in \mathbb N$, we have $g_j a_{-t_j} (n_{x_j}^+)^{-1} = \gamma_j^{-1}$ which preserves $\limitset$. This captures the self-similarity of $\limitset$.

By compactness, we can pass to subsequences so that $\lim_{j \to \infty} g_j = g \in \Omega_0$ and $\lim_{j \to \infty} w_j = w \in \mathfrak{R}_\varkappa(\LieSimple^+)$. Now, for all $j \in \mathbb N$, applying $g_j a_{-t_j} (n_{x_j}^+)^{-1}$ in \cref{eqn:IfLemmaIsFalse} and recalling that the right $A$-action on $\partial_\infty\mathbb{H}_\mathbb{K}^n \cong G/P$ is trivial, we have
\begin{align*}
&\limitset \cap g_j \cdot \exp\bigl(B_1^{\mathrm{CC}}(0) \setminus B_{1/j}^{\mathrm{CC}}(0)\bigr) \\
\subset{}&g_ja_{-t_j} \cdot \exp\left((n_{x_j}^+)^{-1} \cdot \left\{y \in \LieN^+: |\langle (n_{x_j}^+)^{-1} \cdot y, w_j \rangle| \leq \frac{\epsilon_j}{j}\right\}\right) \cdot a_{t_j} \\
={}&g_j \cdot \exp \left(\Ad_{a_{-t_j}} \left\{y \in \LieN^+: |\langle y, w_j \rangle| \leq \frac{\epsilon_j}{j}\right\}\right) \\
={}&g_j \cdot \exp\left\{y \in \LieN^+: \bigl|\bigl\langle \Ad_{a_{t_j}}(y), w_j \bigr\rangle\bigr| \leq \frac{\epsilon_j}{j}\right\} \\
={}&g_j \cdot \exp\left\{y_{\LieSimple^+} + y_{[\LieSimple^+, \LieSimple^+]} \in \LieSimple^+ \oplus [\LieSimple^+, \LieSimple^+]: \left|\left\langle e^{-t_j}y_{\LieSimple^+} + e^{-2t_j}y_{[\LieSimple^+, \LieSimple^+]}, w_j \right\rangle\right| \leq \frac{\epsilon_j}{j}\right\} \\
={}&g_j \cdot \exp\left\{y_{\LieSimple^+} + y_{[\LieSimple^+, \LieSimple^+]} \in \LieSimple^+ \oplus [\LieSimple^+, \LieSimple^+]: \left|\left\langle y_{\LieSimple^+} + e^{-t_j}y_{[\LieSimple^+, \LieSimple^+]}, w_j \right\rangle\right| \leq \frac{1}{j}\right\}.
\end{align*}
Then in the limit $j \to \infty$, we have
\begin{align}
\label{eqn:SmallPieceOfLimitSetContainedInCodimensionOneSubmanifold}
\limitset \cap g \cdot \exp\bigl(B_1^{\mathrm{CC}}(0)\bigr) \subset g \cdot \exp\left\{y \in \LieN^+: \langle y, L^*(w)\rangle = 0\right\}
\end{align}
where $L: \LieN^+ \to \LieN^+$ is the orthogonal projection map onto $\LieSimple^+$ with respect to $\langle \cdot, \cdot\rangle_\perp$. But $L^*(w) \neq 0$ because $\mathfrak{R}_\varkappa(\LieSimple^+) \ni w$ does not intersect $(\LieSimple^+)^\perp = \im(L)^\perp = \ker(L^*)$ with respect to $\langle \cdot, \cdot\rangle_\perp$ as $\varkappa > 0$. Thus \cref{eqn:SmallPieceOfLimitSetContainedInCodimensionOneSubmanifold} contradicts \cite[Proposition 3.12]{Win15} since $\Gamma < G$ is Zariski dense.
\end{proof}

Due to \cite[Eq. (6)]{SW21}, taking the adjoint with respect to $\langle\cdot, \cdot\rangle_\perp$ on $\T_{\check{y}}(\LieN^+) \cong \LieN^+$, we can fix the positive constant
\begin{align}
\label{eqn:ConstantBPPsiAdjoint}
C_{\BP, \Psi}^* &= \sup_{x, y \in \tilde{U}_1, j \in \{1, 2, \dotsc, j_{\mathrm{m}}\}} \|(d\BP_{j, x} \circ \Psi)_{\check{y}}^*\|_{\mathrm{op}}
\end{align}
independent of the sections and their length provided by \cref{pro:FrameFlowLNIC}. We then fix the positive constant
\begin{align}
\label{eqn:ConstantVarkappa0}
\varkappa_0 &= \frac{\varepsilon_2}{C_{\BP, \Psi}^*}.
\end{align}
Finally, fix $\varepsilon_3 \in (0, 1)$ to be the $\delta$ provided by \cref{pro:NonConcentrationProperty} corresponding to $\varkappa_0$.

\section{Construction of Dolgopyat operators}
\label{sec:ConstructionOfDolgopyatOperators}
In this section, we will construct the Dolgopyat operators. We start with some preliminary lemmas and fix many required constants.

\Cref{lem:ComparingExpWithBP} applies for any arbitrary set of sections of any length provided by \cref{pro:FrameFlowLNIC}. Shrinking $\mathcal{U} \subset \tilde{U}_1$ if necessary, we can assume henceforth that it is a convex open subset and $\Psi^{-1}(\mathcal{U}) \subset \overline{B_1^{\mathrm{CC}}(0)}$. Due to \cite[Eq. (6)]{SW21}, we can fix the positive constant
\begin{align*}
C_{\BP} = \sup_{x, y \in \tilde{U}_1, j \in \{1, 2, \dotsc, j_{\mathrm{m}}\}} \|(d\BP_{j, x})_y\|_{\mathrm{op}}
\end{align*}
independent of the sections and their length provided by \cref{pro:FrameFlowLNIC}. There exists $\delta_0 > 0$ such that any pair of points in $B_{\delta_0}^{AM}(e) \subset AM$ has a unique geodesic through them. For all $x \in \mathcal{U}$ and $z \in \T_x(\mathcal{U})$ with $\|z\| \leq d(x, \partial(\mathcal{U}))$, denote by $\gamma_z: [0, 1] \to \mathcal{U}$ the geodesic with $\gamma_z(0) = x$ and $\gamma_z'(0) = z$. Let $x, y \in \mathcal{U}$ and $z \in \T_x(\mathcal{U})$ such that $\gamma_z(1) = y$. Let $1 \leq j \leq j_{\mathrm{m}}$ be an integer. \Cref{lem:ComparingExpWithBP} can be proved as in \cite[Lemma 7.1]{SW21} using the curve
\begin{align*}
\varphi^{\mathrm{BP}}_{j, x, z} = \BP_{j, x} \circ \gamma_z: [0, 1] \to AM
\end{align*}
which has endpoints $\varphi^{\mathrm{BP}}_{j, x, z}(0) = e$ and $\varphi^{\mathrm{BP}}_{j, x, z}(1) = \BP_{j, x}(y) = \BP_j(x, y)$.

\begin{lemma}
\label{lem:ComparingExpWithBP}
There exists $C_{\exp, \BP} > 0$ such that for all $1 \leq j \leq j_{\mathrm{m}}$ and $x, y \in \mathcal{U}$ with $d(x, y) < \frac{\delta_0}{C_{\BP}}$, we have
\begin{align*}
d_{AM}\left(\exp(Z), \BP_j(x, y)\right) \leq C_{\exp, \BP} d(x, y)^2
\end{align*}
where $z \in \T_x(\mathcal{U})$ such that $\gamma_z(1) = y$ and $Z = (d\BP_{j, x})_x(z)$.
\end{lemma}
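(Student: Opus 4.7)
The plan is to carry out a standard Taylor expansion argument at the identity $e \in AM$, using the exponential map as a local chart. Because $\|Z\| \le C_{\BP}\|z\| = C_{\BP}\,d(x,y) < \delta_0$ and the curve $\varphi^{\mathrm{BP}}_{j,x,z}: [0,1] \to AM$ has $\|(\varphi^{\mathrm{BP}}_{j,x,z})'(t)\| \le C_{\BP}\,d(x,y) < \delta_0$ for all $t$, the endpoint $\BP_j(x,y) = \varphi^{\mathrm{BP}}_{j,x,z}(1)$ and the entire curve stay inside $B_{\delta_0}^{AM}(e)$, on which $\exp$ is a diffeomorphism onto its image. In particular we may define the smooth lift
\begin{align*}
\phi := \exp^{-1} \circ \varphi^{\mathrm{BP}}_{j,x,z} : [0,1] \to \LieA \oplus \LieM,
\end{align*}
which satisfies $\phi(0) = 0$ and, using $(d\exp)_0 = \Id$ together with the definition of $Z$, also $\phi'(0) = (d\BP_{j,x})_x(\gamma_z'(0)) = Z$.

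Next I would apply Taylor's theorem with Lagrange remainder in the vector space $\LieA \oplus \LieM$ to get
\begin{align*}
\phi(1) = \phi(0) + \phi'(0) + \tfrac{1}{2}\phi''(\xi) = Z + \tfrac{1}{2}\phi''(\xi)
\end{align*}
for some $\xi \in (0,1)$. The quantity $\|\phi''(\xi)\|$ is controlled by: (a) a uniform $C^2$ bound on $\exp^{-1}$ on $\overline{B_{\delta_0}^{AM}(e)}$; (b) a uniform $C^2$ bound on $\BP_{j,x}: \mathcal{U} \to AM$ (for all $x \in \mathcal{U}$ and all $1 \le j \le j_{\mathrm{m}}$, independent of the choice and length of sections — this is the $C^2$ analog of the uniform $C^1$ bound $C_{\BP}$ and uses the same contraction argument from \cite[Lemma 4.1]{PS16} that underlies the definition of $T_0$); and (c) the fact that $\gamma_z$ is a geodesic of length $\|z\| = d(x,y)$, so $\|\gamma_z'(t)\|$ is constant equal to $d(x,y)$ and the covariant derivative of $\gamma_z'$ vanishes. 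Combining via the chain and product rules on the composition $\BP_{j,x} \circ \gamma_z$ and then on $\exp^{-1}$ applied to it yields $\|\phi''(\xi)\| \le C' d(x,y)^2$ for a constant $C'$ depending only on the Markov section and on $\mathcal{U}$.

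Finally, since $\exp: \overline{B^{\LieA\oplus\LieM}_{\delta_0}(0)} \to \overline{B_{\delta_0}^{AM}(e)}$ is bi-Lipschitz, there is a constant $C'' > 0$ (independent of $j$, $x$, $y$, $z$) with
\begin{align*}
d_{AM}(\exp(Z), \BP_j(x,y)) = d_{AM}(\exp(Z), \exp(\phi(1))) \le C''\,\|\phi(1) - Z\| \le \tfrac{C''C'}{2} d(x,y)^2,
\end{align*}
so the lemma holds with $C_{\exp,\BP} := \tfrac{1}{2}C''C'$.

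The main obstacle is the uniformity claim in step (b): establishing that the $C^2$ norms of $\BP_{j,x}$ admit a bound independent of the admissible sequences $\alpha_j$ and their common length $m$ provided by \cref{pro:FrameFlowLNIC}. This is where one has to be careful, but it is handled exactly as in the construction of $C_{\BP}$: the maps $\Phi^\alpha$ are built as products of smooth factors evaluated at points $\sigma^{(\alpha_0,\ldots,\alpha_i)}(u)$ whose derivatives contract geometrically along the cylinder, so the contributions of high-order terms in the chain rule form convergent geometric series, yielding a uniform $C^2$ bound; this is the content (for $C^1$) of the cited \cite[Lemma 4.1]{PS16}, whose proof extends verbatim to any fixed order of regularity. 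Once uniform $C^2$ control is in hand, the rest of the argument is a routine second-order Taylor estimate.
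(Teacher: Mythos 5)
Your argument is correct and is essentially the paper's own proof: the paper establishes this lemma exactly as in \cite[Lemma 7.1]{SW21}, namely by comparing $\exp(tZ)$ with the curve $\varphi^{\mathrm{BP}}_{j, x, z} = \BP_{j, x} \circ \gamma_z$ via a second-order Taylor estimate, with the needed uniform $C^2$ control on $\BP_{j, x}$ (independent of the sections and their length) coming from the same geometric-series/contraction argument that gives $C_{\BP}$. The only cosmetic repairs needed are to use the integral form of the Taylor remainder (the Lagrange form with a single $\xi$ is a scalar-valued statement) and to note that invertibility of $\exp$ need only hold on a small ball around $e \in AM$, since for $d(x, y)$ bounded away from $0$ the estimate is trivial after enlarging $C_{\exp, \BP}$.
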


The following \cref{lem:SigmaHyperbolicity} is a basic bound derived from the hyperbolicity of the geodesic flow, which we will freely use in the rest of the paper without further comments.

\begin{lemma}
\label{lem:SigmaHyperbolicity}
There exist $c_0 \in (0, 1)$ and $\kappa_1 > \kappa_2 > 1$ such that for all $j \in \mathbb N$ and admissible sequences $\alpha = (\alpha_0, \alpha_1, \dotsc, \alpha_j)$, we have
\begin{align*}
\frac{c_0}{\kappa_1^j} \leq \|(d\sigma^{-\alpha})_u\|_{\mathrm{op}} \leq \frac{1}{c_0\kappa_2^j} \qquad \text{for all $u \in \tilde{U}_{\alpha_j}$}.
\end{align*}
\end{lemma}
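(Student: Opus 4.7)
The plan is to leverage the homogeneous structure of the geodesic flow and extract the contraction rates from the explicit $\Ad$-action, with the standard bounded-distortion machinery handling the iteration. The map $\sigma = \proj_U \circ \mathcal{P}$ is the composition of the Poincar\'{e} first return map $\mathcal{P}(u) = u a_{\tau(u)}$ with the projection $\proj_U$ along strong stable leaves. Restricted to a tangent vector of a strong unstable leaf, $(d\mathcal{P})_u$ coincides with $(da_{\tau(u)})$ on that vector, because the geodesic flow preserves the strong unstable foliation and the correction coming from the variation of $\tau$ points in the flow direction and drops out when the image is read back inside the section $R$. Composing with $d\proj_U$---a smooth map of uniformly bounded Jacobian on the compact support of $\mathsf{m}$---then gives $(d\sigma)$ restricted to the strong unstable subbundle, up to a bounded-distortion factor that is universal over all cylinders.

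The decisive computation comes from the horospherical identification implicit in the section $F$ of \cref{subsec:MarkovSections}: the tangent space to the strong unstable leaf at any point is identified with $\LieN^+$, and under this identification $(da_t)$ acts as $\Ad_{a_{-t}}|_{\LieN^+}$. By the explicit formula at the end of \cref{subsec:LieTheory}, this linear map has singular values $e^t$ on $\LieSimple^+$ and $e^{2t}$ on $[\LieSimple^+, \LieSimple^+]$; consequently its inverse has operator norm $e^{-t}$, realized along $\LieSimple^+$. Absorbing the distortion from $d\proj_U$ and from the metric identification into a universal constant $C_0 > 0$, one obtains
\begin{align*}
C_0^{-1}\, e^{-\tau_{(j, k)}(\sigma^{-(j, k)}(u))} \leq \|(d\sigma^{-(j, k)})_u\|_{\mathrm{op}} \leq C_0\, e^{-\tau_{(j, k)}(\sigma^{-(j, k)}(u))}
\end{align*}
for all admissible pairs $(j, k)$ and all $u \in \tilde{U}_k$.

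Applying the chain rule along $\alpha = (\alpha_0, \alpha_1, \dotsc, \alpha_j)$, the accumulated exponent equals $\tau_\alpha(\sigma^{-\alpha}(u)) \in [\underline{\tau} j,\, \overline{\tau} j]$. The standard bounded distortion lemma for $C^{1+\mathrm{Lip}}$ uniformly hyperbolic Markov systems---which applies here because $\tau$ is Lipschitz, cylinders in $\tilde{U}$ have exponentially shrinking diameters, and $\log \|d\sigma\|_{\mathrm{op}}$ is Lipschitz on cylinders---prevents the per-step constants from accumulating and delivers a single universal $C > 0$ with
\begin{align*}
C^{-1}\, e^{-\overline{\tau} j} \leq \|(d\sigma^{-\alpha})_u\|_{\mathrm{op}} \leq C\, e^{-\underline{\tau} j}
\end{align*}
uniformly in $j$ and $u \in \tilde{U}_{\alpha_j}$. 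Setting $\kappa_1 = e^{\overline{\tau}}$, $\kappa_2 = e^{\underline{\tau}}$ (slightly enlarging $\kappa_1$ if needed to enforce strict inequality $\kappa_1 > \kappa_2 > 1$ in a degenerate case) and $c_0 = C^{-1}$ yields the claim. The only real subtlety is the uniformity of the bounded distortion constant in the iteration length $j$; this is classical for compact basic sets of smooth Anosov flows and requires no input from the new material developed in \cref{sec:LNIC&NCP}.
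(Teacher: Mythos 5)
The paper itself gives no proof of \cref{lem:SigmaHyperbolicity}: it is asserted as ``a basic bound derived from the hyperbolicity of the geodesic flow'' and used freely, so your task is really to reconstruct the standard argument. Your identification of the mechanism is the right one --- reading unstable tangent vectors in $\LieN^+$ via the section $F$, the time-$t$ map acts by $\Ad_{a_{-t}}|_{\LieN^+}$ with singular values $e^{t}$ on $\LieSimple^+$ and $e^{2t}$ on $[\LieSimple^+,\LieSimple^+]$, so an inverse branch contracts at worst by $e^{-\tau}$, and $\underline{\tau}j \leq \tau_\alpha \leq \overline{\tau}j$ converts elapsed time into the exponents $\kappa_1, \kappa_2$.

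The weak point is your final chaining step. Bounded distortion, in the sense you invoke it, compares $\|(d\sigma^{-\alpha})_u\|$ with $\|(d\sigma^{-\alpha})_v\|$ for $u, v$ in the same cylinder; it does not convert per-step two-sided bounds $C_0^{-1}e^{-\tau} \leq \|(d\sigma^{-(j,k)})_u\|_{\mathrm{op}} \leq C_0 e^{-\tau}$ into a $j$-step bound with a uniform constant. Operator norms are only submultiplicative, and for $\mathbb{K} \neq \R$ the per-step derivatives are genuinely non-conformal (rates $e^{-\tau}$ and $e^{-2\tau}$), so naive composition of your per-step estimates yields an upper bound of the form $(C_0 e^{-\underline{\tau}})^j$; since nothing forces $C_0 < e^{\underline{\tau}}$ (the constant comes from chart and holonomy distortion, while $\underline{\tau}$ may be small), this can fail to produce any $\kappa_2 > 1$, and the constant cannot be absorbed into the rate on the upper-bound side. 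The correct repair is already implicit in your first paragraph: because $\tau$ and the projection structure are constant on stable fibers (\cref{lem:tauAndHolonomyConstantOnStrongStableLeaves}) and the flow preserves the strong stable foliation, on a cylinder one has $\sigma^j = \proj_U \circ \mathcal{P}^j$ with $\mathcal{P}^j(u) = u a_{\tau_\alpha(u)}$, so the $j$-fold inverse branch is a \emph{single} backward time-$\tau_\alpha$ map --- whose derivative on $\LieN^+$ has exact singular values $e^{-\tau_\alpha}$ and $e^{-2\tau_\alpha}$ by the formula in \cref{subsec:LieTheory} --- composed with only boundedly many uniformly bi-Lipschitz corrections (the stable projection and the chart comparisons, uniform by compactness). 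Then the distortion constant enters once, not $j$ times, and one may take $\kappa_2 = e^{\underline{\tau}} > 1$, $\kappa_1 \geq e^{\overline{\tau}}$, and a uniform $c_0$; your lower bound goes through either way, since enlarging $\kappa_1$ costs nothing.
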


Recall that Dolgopyat's method works successfully when the derivative of $\rho_b$ is large. This motivated the definition of $\widehat{M}_0(b_0)$ for all $b_0 > 0$. This criteria appears for the Lasota--Yorke \cite{LY73} type inequalities in \cref{lem:FrameFlowPreliminaryLogLipschitz}.

\begin{lemma}
\label{lem:FrameFlowPreliminaryLogLipschitz}
There exists $A_0 > 0$ such that for all $\xi \in \mathbb C$ with $|a| < a_0'$, if $(b, \rho) \in \widehat{M}_0(1)$, then for all $k \in \mathbb N$, we have
\begin{enumerate}
\item\label{itm:FrameFlowPreliminaryLogLipschitzProperty1}	if $h \in K_B(\tilde{U})$ for some $B > 0$, then we have $\tilde{\mathcal{L}}_a^k(h) \in K_{B'}(\tilde{U})$ where $B' = A_0\left(\frac{B}{\kappa_2^k} + 1\right)$;
\item\label{itm:FrameFlowPreliminaryLogLipschitzProperty2}	if $H \in \mathcal{V}_\rho(\tilde{U})$ and $h \in C^1(\tilde{U}, \mathbb R)$ satisfy $\|(dH)_u\|_{\mathrm{op}} \leq Bh(u)$ for all $u \in \tilde{U}$, for some $B > 0$, then we have
\begin{align*}
\bigl\|\bigl(d\tilde{\mathcal{M}}_{\xi, \rho}^k(H)\bigr)_u\bigr\|_{\mathrm{op}} \leq A_0\left(\frac{B}{\kappa_2^k}\tilde{\mathcal{L}}_a^k(h)(u) + \|\rho_b\|\tilde{\mathcal{L}}_a^k\|H\|(u)\right) \qquad \text{for all $u \in \tilde{U}$}.
\end{align*}
\end{enumerate}
\end{lemma}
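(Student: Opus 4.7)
The plan is to prove both inequalities by directly differentiating the explicit formula for the $k$-th iterate given in \cref{eqn:k^thIterationOfCongruenceTransferOperatorOfType_rho} and carefully collecting terms using the product and chain rules. Writing $\rho_b(\Phi^\alpha(u')^{-1}) = e^{ib\tau_\alpha(u')}\rho(\vartheta^\alpha(u')^{-1})$, each summand of $\tilde{\mathcal{M}}_{\xi,\rho}^k(H)(u)$ at $u' = \sigma^{-\alpha}(u)$ produces four derivative terms under the product rule: three involving $H(u')$, coming from the factors $e^{f_\alpha^{(a)}}$, $e^{ib\tau_\alpha}$, and $\rho(\vartheta^\alpha(u')^{-1})$, plus one involving $(dH)_{u'}$. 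Since $\rho$ is unitary, the matrix factor $\rho(\vartheta^\alpha(u')^{-1})$ has unit operator norm and may be dropped from all operator-norm bounds.

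The essential step is to obtain bounds on $\|d(f_\alpha^{(a)} \circ \sigma^{-\alpha})_u\|_{\mathrm{op}}$, $\|d(\tau_\alpha \circ \sigma^{-\alpha})_u\|_{\mathrm{op}}$, and $\|d(\vartheta^\alpha \circ \sigma^{-\alpha})_u\|_{\mathrm{op}}$ that are uniform in $\alpha$ and $k$. Using the telescoping identity $\sigma^{(\alpha_0,\dotsc,\alpha_j)} \circ \sigma^{-\alpha} = \sigma^{-(\alpha_j,\dotsc,\alpha_k)}$, one rewrites
\begin{align*}
f_\alpha^{(a)}\bigl(\sigma^{-\alpha}(u)\bigr) = \sum_{j=0}^{k-1} f^{(a)}_{(\alpha_j,\alpha_{j+1})}\bigl(\sigma^{-(\alpha_j,\dotsc,\alpha_k)}(u)\bigr),
\end{align*}
so by the chain rule combined with \cref{lem:SigmaHyperbolicity},
\begin{align*}
\bigl\|d(f_\alpha^{(a)} \circ \sigma^{-\alpha})_u\bigr\|_{\mathrm{op}} \leq \sum_{j=0}^{k-1} T_0 \cdot \frac{1}{c_0\kappa_2^{k-j}} \leq \frac{T_0}{c_0(\kappa_2-1)},
\end{align*}
and identical telescoping arguments bound $\|d(\tau_\alpha \circ \sigma^{-\alpha})_u\|_{\mathrm{op}}$ and $\|d(\vartheta^\alpha \circ \sigma^{-\alpha})_u\|_{\mathrm{op}}$ by a uniform constant $C$. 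Part \ref{itm:FrameFlowPreliminaryLogLipschitzProperty1} then follows by estimating the $(dh)$-type term via $\|(dh)_{u'}\|_{\mathrm{op}}\|(d\sigma^{-\alpha})_u\|_{\mathrm{op}} \leq B h(u')/(c_0\kappa_2^k)$ using $h \in K_B(\tilde{U})$, summing, and dividing by $\tilde{\mathcal{L}}_a^k(h)(u) = \sum_\alpha e^{f_\alpha^{(a)}(u')}h(u')$ to extract the log-Lipschitz bound $A_0(1 + B/\kappa_2^k)$.

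For part \ref{itm:FrameFlowPreliminaryLogLipschitzProperty2}, the same expansion and bounds yield
\begin{align*}
\bigl\|(d\tilde{\mathcal{M}}_{\xi,\rho}^k(H))_u\bigr\|_{\mathrm{op}} \leq \sum_\alpha e^{f_\alpha^{(a)}(u')}\Bigl[\bigl(C + C|b| + C\|\rho\|\bigr)\|H\|(u') + \tfrac{B}{c_0\kappa_2^k}h(u')\Bigr].
\end{align*}
Using $\|\rho_b\| \geq \max(|b|,\|\rho\|)$ from \cref{lem:LieTheoreticNormBounds}, together with the hypothesis $(b,\rho) \in \widehat{M}_0(1)$ which forces $\|\rho_b\| \geq \delta_{1,\varrho} > 0$ and so lets us absorb the constant term $C$, the coefficient $C + C|b| + C\|\rho\|$ is dominated by $A_0\|\rho_b\|$ for a suitable $A_0$. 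Recognizing the resulting sums as $\tilde{\mathcal{L}}_a^k(\|H\|)(u)$ and $\tilde{\mathcal{L}}_a^k(h)(u)$ finishes the bound. The main bookkeeping subtlety is in the telescoping step: the chain rule must be grouped so that each contraction factor $\kappa_2^{-(k-j)}$ is paired with the correct single-step derivative, turning what might naively appear to be a $k$-dependent sum into a geometric series bounded uniformly in $k$.
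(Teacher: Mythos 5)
Your proposal is correct and follows essentially the same route as the paper (which defers this Lasota--Yorke type estimate to the analogous lemma in \cite{SW21}): differentiate the $k$-th iterate termwise, telescope $f_\alpha^{(a)}$, $\tau_\alpha$, $\vartheta^\alpha$ into single-step pieces paired with the contraction factors from \cref{lem:SigmaHyperbolicity} and the uniform $C^1$ bound $T_0$, and absorb the constant term using $\|\rho_b\| \geq \delta_{1,\varrho}$ for $(b,\rho) \in \widehat{M}_0(1)$ via \cref{lem:LieTheoreticNormBounds}. No gaps worth noting.
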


Fix $A_0 > 0$ provided by \cref{lem:FrameFlowPreliminaryLogLipschitz}. Fix $m_1 \in \mathbb N$ sufficiently large and for all $k \in \mathcal{A}$, fix a cylinder $\mathtt{C}_k \subset U_1$ with $\len(\mathtt{C}_k) = m_1$ such that $\overline{\mathtt{C}_k} \subset \mathcal{U}$ and $\sigma^{m_1}(\mathtt{C}_k) = \interior(U_k)$. Fix $C_{\mathrm{Vit}} = \min_{k \in \mathcal{A}}d(\overline{\mathtt{C}_k}, \partial(\mathcal{U}))$. Let the corresponding sections be $\mathtt{v}_k: \tilde{U}_k \to \tilde{U}_1$ for all $k \in \mathcal{A}$. Let $z_1 \in \tilde{R}_1$ be the center and endow $W_{\epsilon_0}^{\mathrm{su}}(z_1)$ with the Carnot--Carath\'{e}odory metric using the map $\Psi$ which is then an isometry. We denote by $W_\epsilon^{\mathrm{su}, \mathrm{CC}}(u) \subset W_{\epsilon_0}^{\mathrm{su}}(z_1)$ the open Carnot--Carath\'{e}odory ball of radius $\epsilon \in (0, \epsilon_{\mathrm{CC}})$ centered at $u \in \tilde{U}_1$ where $\epsilon_{\mathrm{CC}} > 0$ is some upper bound so that the containment holds. Now, fix positive constants
\begin{align}
\label{eqn:Constant_b_0}
b_0 &= 1, \\
\label{eqn:ConstantE}
E &> \frac{2A_0}{\delta_{1, \varrho}}, \\
\delta_1 &< \frac{\varepsilon_1\varepsilon_2\varepsilon_3\delta_\Psi }{14}, \\
\label{eqn:Constantepsilon1}
\epsilon_1 &< \min\left(C_{\mathrm{Vit}}, \epsilon_{\mathrm{CC}}, \frac{2\delta_0 \delta_{1, \varrho}}{C_{\BP}}, \frac{4\delta_1}{C_{\BP}^2}, \frac{4\delta_1 \delta_{1, \varrho}}{C_{\exp, \BP}}, \frac{1}{\delta_1}, \frac{c_0C_{\mathrm{Ano}}C_\phi\hat{\delta}e^{\hat{\delta}}}{5\kappa_1^{m_1}\delta_{1, \varrho}}\right), \\
\label{eqn:Constantepsilon2}
\epsilon_2 &< \min\left(\frac{\varepsilon_3\epsilon_1}{4N}, \frac{\delta_1\epsilon_1}{4N(A_0 + \delta_1)}\right), \\
\label{eqn:Constantepsilon3}
\epsilon_3 &= \frac{c_0\kappa_2^{m_1}\epsilon_2}{2}, \\
\label{eqn:Constantepsilon4}
\epsilon_4 &= 10c_0^{-1}\kappa_1^{m_1}\epsilon_1, \\
\label{eqn:Constantm2}
m_2 &> m_0 \text{ such that } \kappa_2^{m_2} > \max\left(8A_0, \frac{4EN\epsilon_2}{c_0\log(2)}, \frac{32EN\epsilon_2}{c_0}, \frac{4E}{c_0\delta_1}\right), \\
\label{eqn:Constantmu}
\mu &< \min\left(\frac{E\epsilon_2}{2N}, \frac{1}{4N}, \frac{1}{16 \cdot 16e^{2m_2 T_0} \cdot N}\arccos\left(1 - \frac{(\delta_1 \epsilon_1)^2}{2}\right)^2\right).
\end{align}
Here, $C_\phi$ is defined in \cite[Subsection 9.2]{SW21} and only depends on the Markov section which was fixed in \cref{subsec:MarkovSections}. Fix $m = m_1 + m_2$. Fix admissible sequences $\alpha_j = (\alpha_{j, 0}, \alpha_{j, 1}, \dotsc, \alpha_{j, m_2 - 1}, 1)$ and corresponding sections $v_j = \sigma^{-\alpha_j}: \tilde{U}_1 \to \tilde{U}_{\alpha_{j, 0}}$ provided by \cref{pro:FrameFlowLNIC} for all $0 \leq j \leq j_{\mathrm{m}}$. Fix corresponding maps $\BP_j: \tilde{U}_1 \times \tilde{U}_1 \to AM$ provided by \cref{pro:FrameFlowLNIC} for all $1 \leq j \leq j_{\mathrm{m}}$.

Let $(b, \rho) \in \widehat{M}_0(b_0)$ and $k \in \mathcal{A}$. Using $\Psi$ and the Vitali covering lemma on $\LieN^+ \cong \R^{\dim(\LieN^+)} = \R^{\dim_\R(\mathbb K)n - 1}$, we can choose a finite subset $\big\{x_{k, r, 1}^{(b, \rho)} \in \mathtt{C}_k: r \in \big\{1, 2, \dotsc, r_k^{(b, \rho)}\big\}\big\} \subset \mathtt{C}_k$ for some $r_k^{(b, \rho)} \in \mathbb N$ and corresponding open balls $C_{k, r}^{(b, \rho)} = W_{\epsilon_1/\|\rho_b\|}^{\mathrm{su}, \mathrm{CC}}\big(x_{k, r, 1}^{(b, \rho)}\big) \subset \mathcal{U}$ and $\hat{C}_{k, r}^{(b, \rho)} = W_{5\epsilon_1/\|\rho_b\|}^{\mathrm{su}, \mathrm{CC}}\big(x_{k, r, 1}^{(b, \rho)}\big)$ for all $1 \leq r \leq r_k^{(b, \rho)}$ such that $C_{k, r}^{(b, \rho)} \cap C_{k, r'}^{(b, \rho)} = \varnothing$ for all $1 \leq r, r' \leq r_k^{(b, \rho)}$ with $r \neq r'$ and $\mathtt{C}_k \subset \bigcup_{r = 1}^{r_k^{(b, \rho)}} \hat{C}_{k, r}^{(b, \rho)}$. Recall the notation $\check{x}_{k, r, 1}^{(b, \rho)} = \Psi^{-1}\big(x_{k, r, 1}^{(b, \rho)}\big)$ for all $1 \leq r \leq r_k^{(b, \rho)}$.

\begin{lemma}
\label{lem:PartnerPointInZariskiDenseLimitSetForBPBound}
For all $(b, \rho) \in \widehat{M}_0(b_0)$, $x_1 \in \mathcal{U} \cap \Omega$, and $\omega \in V_\rho^{\oplus \dim(\rho)}$ with $\|\omega\|_2 = 1$, there exist $1 \leq j \leq j_{\mathrm{m}}$ and $\check{x}_2 \in \limitset \cap \left(B_{s_1}^{\mathrm{CC}}(\check{x}_1) \setminus B_{s_2}^{\mathrm{CC}}(\check{x}_1)\right)$ such that
\begin{align*}
\left\|d\rho_b\left(d(\BP_{j, x_1} \circ \iota(n_{\check{x}_1}^+) \circ \Psi)_0(z)\right)(\omega)\right\|_2 \geq 7\delta_1\epsilon_1
\end{align*}
where $s_1 = \frac{\epsilon_1}{2\|\rho_b\|}$, $s_2 = \frac{\varepsilon_3\epsilon_1}{2\|\rho_b\|}$, and $z = \bigl(0, \bigl(n_{\check{x}_1}^+\bigr)^{-1} \cdot \check{x}_2\bigr) \in \T_0(\LieN^+)$.
\end{lemma}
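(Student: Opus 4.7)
The plan is to chain three ingredients. First, apply Lemma~\ref{lem:maActionLowerBound} to $\omega$ to produce a unit vector $z_0 \in \LieA \oplus \LieM$ with $\|d\rho_b(z_0)(\omega)\|_2 \geq \varepsilon_1 \|\rho_b\|$. Next, apply LNIC (Proposition~\ref{pro:FrameFlowLNIC}) at $u = x_1 \in \mathcal{U}$ with the target vector $z_0$ to obtain an index $1 \leq j \leq j_{\mathrm{m}}$ and a unit vector $Z' \in \LieSimple^+$ satisfying $|\langle (d\BP_{j,x_1})_{x_1}(d\Psi)_{\check{x}_1}(Z'), z_0\rangle| \geq \varepsilon_2$.

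To apply NCP (Proposition~\ref{pro:NonConcentrationProperty}), I construct the test direction $w$ by duality. Define $v := ((d\BP_{j,x_1} \circ \Psi)_{\check{x}_1})^*(z_0) \in \LieN^+$, which by \eqref{eqn:ConstantBPPsiAdjoint} satisfies $\|v\| \leq C_{\BP, \Psi}^*$ and by the LNIC inequality satisfies $|\langle Z', v\rangle| \geq \varepsilon_2$. Using that $\LieSimple^+$ and $[\LieSimple^+, \LieSimple^+]$ are orthogonal under both $\langle\cdot,\cdot\rangle$ and $\langle\cdot,\cdot\rangle_\perp$, the normalized $w := v/\|v\|_\perp$ is a unit vector and satisfies $|\langle w, Z'\rangle_\perp| \geq \varepsilon_2/C_{\BP, \Psi}^* = \varkappa_0$, so $w \in \mathfrak{R}_{\varkappa_0}(\LieSimple^+)$. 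Since $x_1 \in \Omega$ gives $\check{x}_1 \in \loglimitset$ and $\check{x}_1 \in \Psi^{-1}(\mathcal{U}) \subset \overline{B_1^{\mathrm{CC}}(0)}$, apply NCP with $\varkappa = \varkappa_0$, $x = \check{x}_1$, $\epsilon = s_1$, and $w$ to produce $\check{x}_2 \in \loglimitset \cap (B_{s_1}^{\mathrm{CC}}(\check{x}_1) \setminus B_{s_2}^{\mathrm{CC}}(\check{x}_1))$ (using $s_1\varepsilon_3 = s_2$) with $|\langle (n_{\check{x}_1}^+)^{-1} \cdot \check{x}_2, w\rangle| \geq s_2$.

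Set $z := (n_{\check{x}_1}^+)^{-1} \cdot \check{x}_2$ and $T := (d(\BP_{j,x_1} \circ \iota(n_{\check{x}_1}^+) \circ \Psi))_0$. The identity $\iota(n_{\check{x}_1}^+) \circ \Psi(y) = \Psi(n_{\check{x}_1}^+ \cdot y)$ yields $T = (d(\BP_{j,x_1} \circ \Psi))_{\check{x}_1} \circ \Lambda$, where $\Lambda$ is the derivative at $0$ of left $N^+$-translation by $n_{\check{x}_1}^+$. The Baker--Campbell--Hausdorff formula on the $2$-step nilpotent $N^+$ gives $\Lambda(z) = z + \tfrac{1}{2}[\check{x}_1, z]$ with the bracket term in $[\LieSimple^+, \LieSimple^+]$. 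Hence $\langle T(z), z_0\rangle = \langle z, v\rangle + \tfrac{1}{2}\langle [\check{x}_1, z], v\rangle$, where the main term $|\langle z, v\rangle| = \|v\|_\perp |\langle z, w\rangle| \geq \varepsilon_2 s_2$ comes directly from NCP while the bracket correction is lower-order. Decomposing $T(z) = \alpha z_0 + \beta$ orthogonally, the reverse triangle inequality gives $\|d\rho_b(T(z))(\omega)\|_2 \geq \varepsilon_1|\alpha|\|\rho_b\| - \|\rho_b\|\|\beta\|$, and a careful accounting of the constants (in particular $\delta_1 < \varepsilon_1\varepsilon_2\varepsilon_3\delta_\Psi/14$ together with \eqref{eqn:Constantepsilon1}--\eqref{eqn:Constantepsilon2}) delivers the desired lower bound $\geq 7\delta_1\epsilon_1$.

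The main obstacle, absent in the real hyperbolic setting of \cite{SW21}, is the nontrivial linearization $\Lambda$ arising from BCH on the non-abelian $N^+$, together with the corresponding need to work with the Carnot--Carath\'eodory metric on $\LieN^+$ and to restrict NCP to the $\LieSimple^+$ directions via $\mathfrak{R}_\varkappa(\LieSimple^+)$. Propagating the NCP estimate through $\Lambda$ and the linearization of $\Psi$, while absorbing the bracket correction $[\check{x}_1, z] \in [\LieSimple^+, \LieSimple^+]$ uniformly in $\|\rho_b\|$, is the technical core of the proof.
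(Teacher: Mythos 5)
Your chain \cref{lem:maActionLowerBound} $\to$ LNIC $\to$ NCP, with the dual vector $v$, the ring set $\mathfrak{R}_{\varkappa_0}(\LieSimple^+)$, and the choice $\epsilon = s_1$, $\delta = \varepsilon_3$ in \cref{pro:NonConcentrationProperty}, is the same skeleton as the paper's proof, and the order-of-magnitude bookkeeping of the main term (giving $\tfrac{\epsilon_1}{2}\varepsilon_1\varepsilon_2\varepsilon_3\delta_\Psi \geq 7\delta_1\epsilon_1$) is also the intended one. The genuine gap is in the last step, where you convert the pairing estimate into the norm estimate. You decompose $T(z) = \alpha z_0 + \beta$ with $\beta \perp z_0$ and write $\|d\rho_b(T(z))(\omega)\|_2 \geq \varepsilon_1|\alpha|\|\rho_b\| - \|\rho_b\|\,\|\beta\|$. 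Nothing in LNIC or NCP controls $\beta$: the point $\check{x}_2$ is only guaranteed to pair well with the single direction $w$, so the only quantitative information you have about $T(z)$ is the size of its component along $z_0$, namely $|\alpha| \gtrsim \varepsilon_2\varepsilon_3\delta_\Psi\,\epsilon_1/(2\|\rho_b\|)$, while $\|\beta\|$ can be as large as $\|T\|_{\mathrm{op}}\|z\| \sim C_{\BP}\,\epsilon_1/\|\rho_b\|$. Since $\varepsilon_1,\varepsilon_2,\varepsilon_3,\delta_\Psi$ are small and $C_{\BP}$ is not, the error term dominates, and moreover $d\rho_b(\beta)(\omega)$ can cancel $\alpha\, d\rho_b(z_0)(\omega)$; so "careful accounting of constants" cannot close this step.

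The fix is to dualize at the level of the representation rather than fixing the direction $z_0$ first, which is exactly what the paper does. Set $L_3(w) = d\rho_b(w)(\omega)$; by \cref{lem:maActionLowerBound} one has $\|L_3\|_{\mathrm{op}} \geq \varepsilon_1\|\rho_b\|$, hence there is a unit $\omega_0 \in V_\rho^{\oplus\dim(\rho)}$ with $\|L_3^*(\omega_0)\|\geq \varepsilon_1\|\rho_b\|$. One then applies LNIC to the \emph{direction $L_3^*(\omega_0)$ normalized} (not to an independent $z_0$), takes $w$ proportional to $(L_1^*\circ L_{2,j}^*\circ L_3^*)(\omega_0)$, and bounds
\begin{align*}
\|d\rho_b(T(z))(\omega)\|_2 \;\geq\; |\langle d\rho_b(T(z))(\omega), \omega_0\rangle| \;=\; |\langle z, T^*L_3^*(\omega_0)\rangle|,
\end{align*}
a single pairing to which NCP applies directly, with no orthogonal remainder to absorb; note also that the factor $\|\rho_b\|$ needed to cancel the $1/\|\rho_b\|$ in the radius $s_1$ comes from keeping $\|L_3^*(\omega_0)\| \geq \varepsilon_1\|\rho_b\|$ inside the dual vector. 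Your treatment of the base-point discrepancy via left translation (BCH on the $2$-step nilpotent $\LieN^+$) and the $\|\cdot\|$ versus $\|\cdot\|_\perp$ normalization are secondary and handled in the paper by the facts that $\iota(n_{\check{x}_1}^+)$ is an isometry and that all adjoints are taken with respect to $\langle\cdot,\cdot\rangle_\perp$; but as written your argument establishes only that the component of $T(z)$ along $z_0$ is large, which does not imply the claimed lower bound on $\|d\rho_b(T(z))(\omega)\|_2$.
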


\begin{proof}
Let $(b, \rho) \in \widehat{M}_0(b_0)$, $x_1 \in \mathcal{U} \cap \Omega$, and $\omega \in V_\rho^{\oplus \dim(\rho)}$ with $\|\omega\|_2 = 1$. Fix $s_1 = \frac{\epsilon_1}{2\|\rho_b\|}$, $s_2 = \frac{\varepsilon_3\epsilon_1}{2\|\rho_b\|}$. Define the linear maps $L_1 = d(\iota(n_{\check{x}_1}^+) \circ \Psi)_0: \T_0(\LieN^+) \to \T_{x_1}(\mathcal{U})$, $L_{2, j} = (d\BP_{j, x_1})_{x_1}: \T_{x_1}(\mathcal{U}) \to \LieA \oplus \LieM$, and $L_3: \LieA \oplus \LieM \to V_\rho^{\oplus \dim(\rho)}$ by $L_3(w) = d\rho_b(w)(\omega)$ for all $w \in \LieA \oplus \LieM$. It suffices to find $1 \leq j \leq j_{\mathrm{m}}$ and $\check{x}_2 \in \limitset \cap \left(B_{s_1}^{\mathrm{CC}}(\check{x}_1) \setminus B_{s_2}^{\mathrm{CC}}(\check{x}_1)\right)$ such that
\begin{align*}
|\langle (L_3 \circ L_{2, j} \circ L_1)(z), \omega_0 \rangle| = |\langle z, (L_1^* \circ L_{2, j}^* \circ L_3^*)(\omega_0) \rangle| \geq 7\delta_1\epsilon_1
\end{align*}
for some $\omega_0 \in V_\rho^{\oplus \dim(\rho)}$ with $\|\omega_0\|_2 = 1$, where $z = \bigl(0, \bigl(n_{\check{x}_1}^+\bigr)^{-1} \cdot \check{x}_2\bigr) \in \T_0(\LieN^+)$, and the adjoints are taken with respect to $\langle\cdot, \cdot\rangle_\perp$ on $\T_0(\LieN^+) \cong \LieN^+$. By \cref{lem:maActionLowerBound}, $\|L_3\|_{\mathrm{op}} \geq \varepsilon_1\|\rho_b\|$ which implies $\|L_3^*\|_{\mathrm{op}} \geq \varepsilon_1\|\rho_b\|$. Hence there exists $\omega_0 \in V_\rho^{\oplus \dim(\rho)}$ with $\|\omega_0\|_2 = 1$ such that $\|L_3^*(\omega_0)\| \geq \varepsilon_1\|\rho_b\|$. \Cref{pro:FrameFlowLNIC} implies that there exists $1 \leq j \leq j_{\mathrm{m}}$ such that $\|L_{2, j}^*(L_3^*(\omega_0))\| \geq \varepsilon_2 \varepsilon_1\|\rho_b\|$. Using a previous bound for $\Psi$ and the fact that $\iota(n_{\check{x}_1}^+)$ is an isometry,  we get
\begin{align*}
\|L_1^*(L_{2, j}^*(L_3^*(\omega_0)))\| \geq \delta_\Psi \varepsilon_1\varepsilon_2 \|\rho_b\|.
\end{align*}
Now, \cref{pro:FrameFlowLNIC,eqn:RingLikeSubset,eqn:ConstantBPPsiAdjoint,eqn:ConstantVarkappa0} also implies that
\begin{align*}
\frac{(L_1^* \circ L_{2, j}^* \circ L_3^*)(\omega_0)}{\|(L_1^* \circ L_{2, j}^* \circ L_3^*)(\omega_0)\|} \in \mathfrak{R}_{\varkappa_0}(\LieSimple^+).
\end{align*}
Thus, by \cref{pro:NonConcentrationProperty}, there exists $\check{x}_2 \in \limitset \cap \left(B_{s_1}^{\mathrm{CC}}(\check{x}_1) \setminus B_{s_2}^{\mathrm{CC}}(\check{x}_1)\right)$ such that
\begin{align*}
|\langle z, (L_1^* \circ L_{2, j}^* \circ L_3^*)(\omega_0) \rangle| &\geq \frac{\epsilon_1}{2\|\rho_b\|} \cdot \varepsilon_3 \cdot \|L_1^*(L_{2, j}^*(L_3^*(\omega_0)))\| \\
&\geq \frac{\epsilon_1}{2\|\rho_b\|} \cdot \varepsilon_1\varepsilon_2\varepsilon_3\delta_\Psi \|\rho_b\| \geq 7\delta_1\epsilon_1
\end{align*}
where $z = \bigl(0, \bigl(n_{\check{x}_1}^+\bigr)^{-1} \cdot \check{x}_2\bigr) \in \T_0(\LieN^+)$.
\end{proof}

Let $(b, \rho) \in \widehat{M}_0(b_0)$, $H \in \mathcal{V}_\rho(\tilde{U})$, $k \in \mathcal{A}$, and $1 \leq r \leq r_k^{(b, \rho)}$. Corresponding to
\begin{align*}
\omega = \frac{\rho_b\big(\Phi^{\alpha_0}\big(v_0\big(x_{k, r, 1}^{(b, \rho)}\big)\big)^{-1}\big)H\big(v_0\big(x_{k, r, 1}^{(b, \rho)}\big)\big)}{\big\|H\big(v_0\big(x_{k, r, 1}^{(b, \rho)}\big)\big)\big\|_2} \in V_\rho^{\oplus \dim(\rho)}
\end{align*}
denote $j_{k, r}^{(b, \rho), H}$ and $x_{k, r, 2}^{(b, \rho), H}$ to be the $j$ and $\Psi(\check{x}_2) \in U_1 \cap \bigl(W_{\epsilon_1/2\|\rho_b\|}^{\mathrm{su}, \mathrm{CC}}\big(x_{k, r, 1}^{(b, \rho)}\big) \setminus W_{\varepsilon_3\epsilon_1/2\|\rho_b\|}^{\mathrm{su}, \mathrm{CC}}\big(x_{k, r, 1}^{(b, \rho)}\big)\bigr)$ provided by \cref{lem:PartnerPointInZariskiDenseLimitSetForBPBound}. Define
\begin{align*}
D_{k, r, 1}^{(b, \rho)} &= W_{\epsilon_2/\|\rho_b\|}^{\mathrm{su}, \mathrm{CC}}\big(x_{k, r, 1}^{(b, \rho)}\big) \subset C_{k, r}^{(b, \rho)}, & D_{k, r, 2}^{(b, \rho), H} &= W_{\epsilon_2/\|\rho_b\|}^{\mathrm{su}, \mathrm{CC}}\big(x_{k, r, 2}^{(b, \rho), H}\big) \subset C_{k, r}^{(b, \rho)}, \\
\slashed{D}_{k, r, 1}^{(b, \rho)} &= W_{\epsilon_2/2\|\rho_b\|}^{\mathrm{su}, \mathrm{CC}}\big(x_{k, r, 1}^{(b, \rho)}\big) \subset C_{k, r}^{(b, \rho)}, & \slashed{D}_{k, r, 2}^{(b, \rho), H} &= W_{\epsilon_2/2\|\rho_b\|}^{\mathrm{su}, \mathrm{CC}}\big(x_{k, r, 2}^{(b, \rho), H}\big) \subset C_{k, r}^{(b, \rho)}, \\
\hat{D}_{k, r, 1}^{(b, \rho)} &= W_{2N\epsilon_2/\|\rho_b\|}^{\mathrm{su}, \mathrm{CC}}\big(x_{k, r, 1}^{(b, \rho)}\big) \subset C_{k, r}^{(b, \rho)}, & \hat{D}_{k, r, 2}^{(b, \rho), H} &= W_{2N\epsilon_2/\|\rho_b\|}^{\mathrm{su}, \mathrm{CC}}\big(x_{k, r, 2}^{(b, \rho), H}\big) \subset C_{k, r}^{(b, \rho)}.
\end{align*}
Denote $\psi_{k, r, 1}^{(b, \rho)}, \psi_{k, r, 2}^{(b, \rho), H} \in C^\infty(\tilde{U}, \mathbb R)$ to be bump functions with $\supp\big(\psi_{k, r, 1}^{(b, \rho)}\big) = \overline{D_{k, r, 1}^{(b, \rho)}}$ and $\supp\big(\psi_{k, r, 2}^{(b, \rho), H}\big) = \overline{D_{k, r, 2}^{(b, \rho), H}}$ such that they attain the maximum values $\psi_{k, r, 1}^{(b, \rho)}\bigr|_{\overline{\slashed{D}_{k, r, 1}^{(b, \rho)}}} = \psi_{k, r, 2}^{(b, \rho), H}\bigr|_{\overline{\slashed{D}_{k, r, 2}^{(b, \rho), H}}} = 1$, and the minimum values $\psi_{k, r, 1}^{(b, \rho)}\bigr|_{\tilde{U} \setminus D_{k, r, 1}^{(b, \rho)}} = \psi_{k, r, 2}^{(b, \rho), H}\bigr|_{\tilde{U} \setminus D_{k, r, 2}^{(b, \rho), H}} = 0$, and we can further assume that $\bigl|\psi_{k, r, 1}^{(b, \rho)}\bigr|_{C^1}, \bigl|\psi_{k, r, 2}^{(b, \rho), H}\bigr|_{C^1} \leq \frac{4\|\rho_b\|}{\epsilon_2}$. It can be checked that $D_{k, r_1, p_1}^{(b, \rho)} \cap D_{k, r_2, p_2}^{(b, \rho), H} = \varnothing$ for all $(r_1, p_1), (r_2, p_2) \in \{1, 2, \dotsc, r_k^{(b, \rho)}\} \times \{1, 2\}$ with $(r_1, p_1) \neq (r_2, p_2)$ and $k \in \mathcal{A}$. Define $\Xi_1(b, \rho) = \big\{(k, r) \in \mathbb Z^2: k \in \mathcal{A}, r \in \big\{1, 2, \dotsc, r_k^{(b, \rho)}\big\}\big\}$ and $\Xi_2 = \{1, 2\} \times \{1, 2\}$. Define $\Xi(b, \rho) = \Xi_1(b, \rho) \times \Xi_2$. For all $(k, r, p, l) \in \Xi(b, \rho)$, denoting $j_{k, r}^{(b, \rho), H}$ by $j$ for convenience, we define the function $\tilde{\psi}_{(k, r, p, l)}^{(b, \rho), H} \in C^\infty(\tilde{U}, \mathbb R)$ by
\begin{align*}
\tilde{\psi}_{(k, r, p, l)}^{(b, \rho), H} =
\begin{cases}
\chi_{\tilde{\mathtt{C}}[\alpha_0]} \cdot \left(\psi_{k, r, 1}^{(b, \rho)} \circ \sigma^{\alpha_0}\right), & p = 1, l = 1 \\
\chi_{\tilde{\mathtt{C}}[\alpha_j]} \cdot \left(\psi_{k, r, 1}^{(b, \rho)} \circ \sigma^{\alpha_j}\right), & p = 1, l = 2 \\
\chi_{\tilde{\mathtt{C}}[\alpha_0]} \cdot \left(\psi_{k, r, 2}^{(b, \rho), H} \circ \sigma^{\alpha_0}\right), & p = 2, l = 1 \\
\chi_{\tilde{\mathtt{C}}[\alpha_j]} \cdot \left(\psi_{k, r, 2}^{(b, \rho), H} \circ \sigma^{\alpha_j}\right), & p = 2, l = 2
\end{cases}
\end{align*}
where using $\sigma^{\alpha_0}$ and $\sigma^{\alpha_j}$ are indeed justified because of the indicator functions $\chi_{\tilde{\mathtt{C}}[\alpha_0]} = \chi_{v_0(\tilde{U}_1)}$ and $\chi_{\tilde{\mathtt{C}}[\alpha_j]} = \chi_{v_j(\tilde{U}_1)}$. For all subsets $J \subset \Xi(b, \rho)$, we define
\begin{align*}
\beta_J^H = \chi_{\tilde{U}} - \mu\sum_{(k, r, p, l) \in J} \tilde{\psi}_{(k, r, p, l)}^{(b, \rho), H} \in C^\infty(\tilde{U}, \mathbb R).
\end{align*}

\begin{remark}
We will often include the superscript $H$ even when there is no dependence on it for a more uniform notation to simplify exposition.
\end{remark}

\begin{lemma}
\label{lem:NumberOfIntersectingBallsLessThanOrEqualToN}
Let $(b, \rho) \in \widehat{M}_0(b_0)$, $H \in \mathcal{V}_\rho(\tilde{U})$, and $J \subset \Xi(b, \rho)$. Then, any connected component of
\begin{align*}
\bigcup \left\{D_{k, r, p}^{(b, \rho), H}: (k, r, p, l) \in J \text{ for some } l \in \{1, 2\}\right\}
\end{align*}
is a union of at most $N$ number of the terms and hence contained in $\hat{D}_{k, r, p}^{(b, \rho), H}$ for any $(k, r, p, l) \in J$ corresponding to one of those terms.
\end{lemma}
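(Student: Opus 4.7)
The plan is to prove the lemma in three steps: establish pairwise disjointness of the balls $D_{k,r,p}^{(b,\rho),H}$ sharing a common value of $k$, combine this with a chain-and-pigeonhole argument to bound the number of balls in a connected component by $N$, and finally deduce the containment in $\hat{D}_{k,r,p}^{(b,\rho),H}$ from a Carnot--Carath\'{e}odory diameter estimate.

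\textbf{Step 1 (same-$k$ disjointness).} Fix $k \in \mathcal{A}$. I first verify that $D_{k,r,p}^{(b,\rho),H} \subset C_{k,r}^{(b,\rho)}$ for both $p \in \{1,2\}$. For $p = 1$ this is immediate from $\epsilon_2 < \epsilon_1$, while for $p = 2$ the triangle inequality in the Carnot--Carath\'{e}odory metric combined with $d_{\mathrm{CC}}(x_{k,r,2}^{(b,\rho),H}, x_{k,r,1}^{(b,\rho)}) < \epsilon_1/(2\|\rho_b\|)$ and $\epsilon_2 < \epsilon_1/2$ (a consequence of~\eqref{eqn:Constantepsilon2}) yields the containment. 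Since the Vitali construction makes the $C_{k,r}^{(b,\rho)}$ pairwise disjoint in $r$, the corresponding $D$-balls are disjoint for different $r$. When $r = r'$ but $p \neq p'$, the centers satisfy $d_{\mathrm{CC}}(x_{k,r,1}^{(b,\rho)}, x_{k,r,2}^{(b,\rho),H}) \geq \varepsilon_3\epsilon_1/(2\|\rho_b\|)$ by the construction of $x_{k,r,2}^{(b,\rho),H}$ via \cref{lem:PartnerPointInZariskiDenseLimitSetForBPBound}, and this exceeds the sum of radii $2\epsilon_2/\|\rho_b\|$ because $\epsilon_2 < \varepsilon_3\epsilon_1/(4N)$ in~\eqref{eqn:Constantepsilon2}; so the two balls are disjoint.

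\textbf{Step 2 (at most $N$ balls per component).} I claim any connected component $\mathcal{C}$ of the union contains at most one $D$-ball per value of $k$. Suppose for contradiction that $\mathcal{C}$ contains two distinct same-$k$ balls; connectedness produces a finite chain $D_{j_1}, D_{j_2}, \ldots, D_{j_m}$ of consecutively overlapping balls in $\mathcal{C}$ from one to the other. By Step~1, consecutive balls in this chain must have distinct $k$-indices, since same-$k$ balls are disjoint and therefore cannot overlap. The triangle inequality bounds the $d_{\mathrm{CC}}$-distance between the centers of $D_{j_1}$ and $D_{j_m}$ by $2(m-1)\epsilon_2/\|\rho_b\|$, while Step~1 forces this distance to be at least $\varepsilon_3\epsilon_1/(2\|\rho_b\|)$, yielding $m-1 > N$ via~\eqref{eqn:Constantepsilon2}. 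Applying the same quantitative comparison to any pair of same-$k$ balls appearing in the chain and invoking pigeonhole on the $N$ available $k$-indices produces two same-$k$ balls in the chain at chain-separation too small to accommodate the required spatial gap, a contradiction. Hence $\mathcal{C}$ contains at most $|\mathcal{A}| = N$ distinct $D$-balls.

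\textbf{Step 3 (containment).} Enumerate the balls in $\mathcal{C}$ as $D_1 = D_{k,r,p}^{(b,\rho),H}, D_2, \ldots, D_n$ with $n \leq N$. By connectedness, each $D_i$ is linked to $D_1$ by a chain of at most $n$ overlapping balls lying in $\mathcal{C}$, so the center of $D_i$ is within $2(n-1)\epsilon_2/\|\rho_b\|$ of $x_{k,r,p}^{(b,\rho),H}$ in the CC metric. Any point of $\mathcal{C}$ lies within $\epsilon_2/\|\rho_b\|$ of some such center and hence within $(2n-1)\epsilon_2/\|\rho_b\| < 2N\epsilon_2/\|\rho_b\|$ of $x_{k,r,p}^{(b,\rho),H}$, so $\mathcal{C} \subset \hat{D}_{k,r,p}^{(b,\rho),H}$ for any $(k,r,p,l) \in J$ realizing a ball in $\mathcal{C}$, as required. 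The main obstacle is Step~2: intra-color disjointness does not by itself exclude two same-$k$ balls from coexisting in a single connected component, since other-$k$ balls could in principle bridge them, and the resolution relies crucially on the quantitative gap between the radius $\epsilon_2/\|\rho_b\|$ of the $D$-balls and the intra-color center separation $\varepsilon_3\epsilon_1/(2\|\rho_b\|)$ arranged by~\eqref{eqn:Constantepsilon2}.
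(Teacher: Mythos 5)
Your argument is correct and is essentially the proof the paper defers to \cite{SW21}: same-$k$ disjointness together with the quantitative center separation coming from \cref{lem:PartnerPointInZariskiDenseLimitSetForBPBound} and \cref{eqn:Constantepsilon2}, a chain-plus-pigeonhole count showing each connected component contains at most one ball per symbol $k \in \mathcal{A}$, and a diameter estimate giving the containment in $\hat{D}_{k, r, p}^{(b, \rho), H}$. Two small points to tighten: for same-$k$ balls with \emph{different} $r$ the center-separation lower bound $\varepsilon_3\epsilon_1/(2\|\rho_b\|)$ is not literally what your Step 1 establishes (it only gives disjointness there), but it follows in one line since the center of the second ball lies in $C_{k, r'}^{(b, \rho)}$ and hence outside $C_{k, r}^{(b, \rho)} = W_{\epsilon_1/\|\rho_b\|}^{\mathrm{su}, \mathrm{CC}}\big(x_{k, r, 1}^{(b, \rho)}\big)$, so its distance to either center of the first ball is at least $\epsilon_1/\|\rho_b\| - \epsilon_1/(2\|\rho_b\|) = \epsilon_1/(2\|\rho_b\|) \geq \varepsilon_3\epsilon_1/(2\|\rho_b\|)$; and in the pigeonhole step you should take the connecting chain to be a shortest one (hence without repeated balls), so that the two same-$k$ balls it produces are genuinely distinct and the spatial-gap contradiction applies.
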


\begin{corollary}
\label{cor:SupAndC1SeminormBoundsForBeta_J}
Let $(b, \rho) \in \widehat{M}_0(b_0)$, $H \in \mathcal{V}_\rho(\tilde{U})$, and $J \subset \Xi(b, \rho)$. Then, we have $1 - N\mu \leq \beta_J^H \leq 1$ and $\left|\beta_J^H\right|_{C^1} \leq \frac{4N\mu\|\rho_b\|}{\epsilon_2}$.
\end{corollary}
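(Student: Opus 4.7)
The plan is to reduce both bounds to counting, at an arbitrary point $u \in \tilde U$, the number of summands $\tilde{\psi}_{(k,r,p,l)}^{(b,\rho),H}$ that are simultaneously nonzero (respectively have nonzero differential), and then invoke the bounds on an individual bump function. The key structural inputs are \cref{lem:NumberOfIntersectingBallsLessThanOrEqualToN} (bounded overlap multiplicity of the $D$-balls) and the mutual disjointness of the section images in \cref{pro:FrameFlowLNIC}.

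The upper bound $\beta_J^H \leq 1$ is immediate: each $\tilde{\psi}_{(k,r,p,l)}^{(b,\rho),H}$ is the product of an indicator with a nonnegative bump, hence nonnegative, so subtracting $\mu \geq 0$ times a nonnegative sum from the constant $1$ yields something $\leq 1$.

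For the lower bound, fix $u \in \tilde U$. If $\tilde{\psi}_{(k,r,p,l)}^{(b,\rho),H}(u) \neq 0$, then the cylinder indicator forces $u \in \tilde{\mathtt{C}}[\alpha_0] = v_0(\tilde U_1)$ when $l=1$, and $u \in \tilde{\mathtt{C}}[\alpha_j] = v_j(\tilde U_1)$ with $j = j_{k,r}^{(b,\rho),H}$ when $l=2$. Because each bump is supported inside $\mathcal{U}$, this tightens to $u \in v_0(\mathcal{U})$ or $u \in v_j(\mathcal{U})$ respectively. Since $v_0(\mathcal{U}), v_1(\mathcal{U}), \dotsc, v_{j_{\mathrm{m}}}(\mathcal{U})$ are mutually disjoint by \cref{pro:FrameFlowLNIC}, $u$ lies in at most one such image; hence all contributing quadruples share a common $l$ (and, when $l=2$, a common $j$). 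Under this restriction the nonvanishing condition reduces to $\sigma^{\alpha_0}(u) \in D_{k,r,p}^{(b,\rho),H}$ (for $l=1$) or $\sigma^{\alpha_j}(u) \in D_{k,r,p}^{(b,\rho),H}$ for the fixed $j$ (for $l=2$), and \cref{lem:NumberOfIntersectingBallsLessThanOrEqualToN} bounds the number of such $(k,r,p)$ by $N$. Since each $\tilde\psi \leq 1$, we conclude $\sum_{(k,r,p,l) \in J} \tilde{\psi}_{(k,r,p,l)}^{(b,\rho),H}(u) \leq N$, whence $\beta_J^H(u) \geq 1 - N\mu$.

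For the $C^1$ seminorm, the identical counting yields at most $N$ nonzero differentials $(d\tilde{\psi}_{(k,r,p,l)}^{(b,\rho),H})_u$ at any given $u$. Combining this with the bound $\bigl|\psi_{k,r,p}^{(b,\rho),H}\bigr|_{C^1} \leq \frac{4\|\rho_b\|}{\epsilon_2}$ inherited by the composed bump, the summand-wise triangle inequality gives $\|(d\beta_J^H)_u\|_{\mathrm{op}} \leq \mu \cdot N \cdot \frac{4\|\rho_b\|}{\epsilon_2}$, as claimed. The main obstacle is the counting step for the lower bound: one must rule out simultaneous contributions from the $l=1$ and $l=2$ cases, which is exactly where the section disjointness from \cref{pro:FrameFlowLNIC} is indispensable—without it the bound would have to accommodate both layers and the multiplicity would double.
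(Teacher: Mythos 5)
Your handling of the sup bounds is correct and is the intended argument: each $\tilde{\psi}_{(k, r, p, l)}^{(b, \rho), H}$ takes values in $[0, 1]$, which gives $\beta_J^H \leq 1$; and at a fixed $u$ the disjointness of $v_0(\mathcal{U}), v_1(\mathcal{U}), \dotsc, v_{j_{\mathrm{m}}}(\mathcal{U})$ from \cref{pro:FrameFlowLNIC} pins down a single branch $\ell$, after which the point $\sigma^{\alpha_\ell}(u)$ can lie in at most one ball $D_{k, r, p}^{(b, \rho), H}$ per $k \in \mathcal{A}$ (this is the content of \cref{lem:NumberOfIntersectingBallsLessThanOrEqualToN} and the stated pairwise disjointness for fixed $k$), so at most $N$ summands are nonzero and $\beta_J^H \geq 1 - N\mu$. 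That part matches what the corollary is meant to follow from.

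The gap is in the $C^1$ estimate, precisely at the phrase ``inherited by the composed bump.'' By definition $\tilde{\psi}_{(k, r, p, l)}^{(b, \rho), H} = \chi_{\tilde{\mathtt{C}}[\alpha_\ell]} \cdot \bigl(\psi_{k, r, p} \circ \sigma^{\alpha_\ell}\bigr)$, so the chain rule gives $\bigl\|\bigl(d\tilde{\psi}_{(k, r, p, l)}^{(b, \rho), H}\bigr)_u\bigr\|_{\mathrm{op}} \leq \frac{4\|\rho_b\|}{\epsilon_2}\,\bigl\|(d\sigma^{\alpha_\ell})_u\bigr\|_{\mathrm{op}}$, and $\sigma^{\alpha_\ell}$ is the $m_2$-fold \emph{forward} (expanding) map: it is the inverse of the contraction $v_\ell = \sigma^{-\alpha_\ell}$, whose derivative has operator norm at most $\frac{1}{c_0\kappa_2^{m_2}}$ by \cref{lem:SigmaHyperbolicity}, so $\bigl\|(d\sigma^{\alpha_\ell})_u\bigr\|_{\mathrm{op}} \geq c_0\kappa_2^{m_2}$, which is enormous by the choice \cref{eqn:Constantm2}. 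Thus composition with $\sigma^{\alpha_\ell}$ does not preserve the bound $\frac{4\|\rho_b\|}{\epsilon_2}$, and your counting argument only yields $\bigl|\beta_J^H\bigr|_{C^1} \leq \frac{4N\mu\|\rho_b\|}{\epsilon_2}\sup_{\ell, u}\|(d\sigma^{\alpha_\ell})_u\|_{\mathrm{op}}$. This is not something more care with supports can repair: $\tilde{\psi}_{(k, r, p, l)}^{(b, \rho), H}$ rises from $0$ to $1$ across the $v_\ell$-image of the region where $\psi_{k, r, p}$ drops from $1$ to $0$, a set of width of order $\kappa_2^{-m_2}\epsilon_2/\|\rho_b\|$, so by the mean value inequality its $C^1$ seminorm genuinely carries the expansion factor. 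Any proof of the constant exactly as stated must therefore supply an input beyond counting plus the chain rule (or acknowledge that the constant absorbs $\sup_{\ell, u}\|(d\sigma^{\alpha_\ell})_u\|_{\mathrm{op}}$), and your proposal does not address this point, which is the only nontrivial part of the derivative bound.
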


\begin{definition}[Dolgopyat operator]
For all $\xi \in \mathbb C$ with $|a| < a_0'$, if $(b, \rho) \in \widehat{M}_0(b_0)$, then for all $J \subset \Xi(b, \rho)$ and $H \in \mathcal{V}_\rho(\tilde{U})$, we define the \emph{Dolgopyat operator} $\mathcal{N}_{a, J}^H: C^1(\tilde{U}, \mathbb R) \to C^1(\tilde{U}, \mathbb R)$ by
\begin{align*}
\mathcal{N}_{a, J}^H(h) = \tilde{\mathcal{L}}_{a}^m\big(\beta_J^Hh\big) \qquad \text{for all $h \in C^1(\tilde{U}, \mathbb R)$}.
\end{align*}
\end{definition}

\begin{definition}[Dense]
For all $(b, \rho) \in \widehat{M}_0(b_0)$, a subset $J \subset \Xi(b, \rho)$ is said to be \emph{dense} if for all $(k, r) \in \Xi_1(b, \rho)$, there exists $(p, l) \in \Xi_2$ such that $(k, r, p, l) \in J$.
\end{definition}

For all $(b, \rho) \in \widehat{M}_0(b_0)$, define $\mathcal J(b, \rho) = \{J \subset \Xi(b, \rho): J \text{ is dense}\}$.

\section{Proof of \texorpdfstring{\cref{thm:FrameFlowDolgopyat}}{\autoref{thm:FrameFlowDolgopyat}}}
\label{sec:ProofOfFrameFlowDolgopyat}
The proofs of \cref{itm:FrameFlowDolgopyatProperty1,itm:FrameFlowDolgopyatProperty2,itm:FrameFlowLogLipschitzDolgopyat} in \cref{thm:FrameFlowDolgopyat} are now verbatim repetitions of the proofs in \cite[\S\ 9]{SW21}. Let us say a few words about the proof of \cref{itm:FrameFlowDolgopyatProperty2} in \cref{thm:FrameFlowDolgopyat}. The only sticking point is that we need to be careful to ensure that the Patterson--Sullivan measure has the doubling property with respect to the Carnot--Carath\'{e}odory metric, say on $\LieN^+ = \log(\partial_\infty\mathbb{H}_\mathbb{K}^n \setminus \{e^-\})$. This is indeed true because the Patterson--Sullivan measure has the doubling property with respect to the visual metric which is equivalent to the Carnot--Carath\'{e}odory metric on $\LieN^+ = \log(\partial_\infty\mathbb{H}_\mathbb{K}^n \setminus \{e^-\})$  (see for example \cite[Proposition 3.7]{Bou18} and its proof and \cite[Proposition 3.12]{PPS15}).

Although the proof of \cref{itm:FrameFlowDominatedByDolgopyat} is also similar, we include the main part of the proof because this is where the crucial cancellations occur via \cref{lem:PartnerPointInZariskiDenseLimitSetForBPBound} whose source wasis LNIC, NCP, and the lower bound in \cref{lem:maActionLowerBound}.

\subsection{Proof of \texorpdfstring{\cref{itm:FrameFlowDominatedByDolgopyat}}{Property \ref{itm:FrameFlowDominatedByDolgopyat}} in \texorpdfstring{\cref{thm:FrameFlowDolgopyat}}{\autoref{thm:FrameFlowDolgopyat}}}
Now, for all $\xi \in \mathbb C$ with $|a| < a_0'$, if $(b, \rho) \in \widehat{M}_0(b_0)$, then for all $H \in \mathcal{V}_\rho(\tilde{U})$, $h \in K_{E\|\rho_b\|}(\tilde{U})$, and $1 \leq j \leq j_{\mathrm{m}}$, we define the functions $\chi_{j, 1}^{[\xi, \rho, H, h]}, \chi_{j, 2}^{[\xi, \rho, H, h]}: \tilde{U}_1 \to \mathbb C$ by
\begin{align*}
&\chi_{j, 1}^{[\xi, \rho, H, h]}(u) \\
={}&\frac{\left\|e^{f_{\alpha_0}^{(a)}(v_0(u))} \rho_b(\Phi^{\alpha_0}(v_0(u))^{-1})H(v_0(u)) + e^{f_{\alpha_j}^{(a)}(v_j(u))} \rho_b(\Phi^{\alpha_j}(v_j(u))^{-1})H(v_j(u))\right\|_2}{(1 - N\mu)e^{f_{\alpha_0}^{(a)}(v_0(u))}h(v_0(u)) + e^{f_{\alpha_j}^{(a)}(v_j(u))}h(v_j(u))}
\end{align*}
and
\begin{align*}
&\chi_{j, 2}^{[\xi, \rho, H, h]}(u) \\
={}&\frac{\left\|e^{f_{\alpha_0}^{(a)}(v_0(u))} \rho_b(\Phi^{\alpha_0}(v_0(u))^{-1})H(v_0(u)) + e^{f_{\alpha_j}^{(a)}(v_j(u))} \rho_b(\Phi^{\alpha_j}(v_j(u))^{-1})H(v_j(u))\right\|_2}{e^{f_{\alpha_0}^{(a)}(v_0(u))}h(v_0(u)) + (1 - N\mu)e^{f_{\alpha_j}^{(a)}(v_j(u))}h(v_j(u))}
\end{align*}
for all $u \in \tilde{U}_1$.

\Cref{lem:chiLessThan1} is the main lemma regarding cancellations among the summands of the transfer operator formulated using the above functions. The next two lemmas are required for its proof. The first is proved exactly as in \cite[Lemma 9.8]{SW21} and the second can be proved by elementary trigonometry.

\begin{lemma}
\label{lem:FrameFlowHTrappedByh}
Let $(b, \rho) \in \widehat{M}_0(b_0)$. Suppose that $H \in \mathcal{V}_\rho(\tilde{U})$ and $h \in K_{E\|\rho_b\|}(\tilde{U})$ satisfy \cref{itm:FrameFlowDominatedByh,itm:FrameFlowLogLipschitzh} in \cref{thm:FrameFlowDolgopyat}. Then for all $(k, r, p, l) \in \Xi(b, \rho)$, denoting $0$ by $j$ if $l = 1$ and $j_{k, r}^{(b, \rho), H}$ by $j$ if $l = 2$, we have
\begin{align*}
\frac{1}{2} \leq \frac{h(v_j(u))}{h(v_j(u'))} \leq 2 \qquad \text{for all $u, u' \in \hat{D}_{k, r, p}^{(b, \rho), H}$}
\end{align*}
and also either of the alternatives
\begin{alternative}
\item\label{alt:FrameFlowHLessThan3/4h}	$\|H(v_j(u))\|_2 \leq \frac{3}{4}h(v_j(u))$ for all $u \in \hat{D}_{k, r, p}^{(b, \rho), H}$,
\item\label{alt:FrameFlowHGreaterThan1/4h}	$\|H(v_j(u))\|_2 \geq \frac{1}{4}h(v_j(u))$ for all $u \in \hat{D}_{k, r, p}^{(b, \rho), H}$.
\end{alternative}
\end{lemma}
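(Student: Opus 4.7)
The plan is to reduce both claims to oscillation estimates for $h \circ v_j$ and $\|H \circ v_j\|_2$ over the small ball $\hat{D}_{k,r,p}^{(b,\rho),H}$, exploiting the exponential contraction of the section $v_j$ (which has length $m_2$), the log-Lipschitz bound on $h$ coming from $h \in K_{E\|\rho_b\|}(\tilde{U})$, and the pointwise bound on $dH$ coming from \cref{itm:FrameFlowLogLipschitzh}. The careful choice of $m_2$ in \cref{eqn:Constantm2} is engineered precisely so that these oscillations are small enough to yield the stated constants, and the whole argument essentially mirrors \cite[Lemma 9.8]{SW21}.

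For the ratio bound, I would combine three ingredients: (i) $|\log \circ h|_{C^1} \leq E\|\rho_b\|$, which is the definition of the cone $K_{E\|\rho_b\|}(\tilde{U})$; (ii) $\|(dv_j)_u\|_{\mathrm{op}} \leq \frac{1}{c_0 \kappa_2^{m_2}}$ from \cref{lem:SigmaHyperbolicity}, since $\len(\alpha_j) = m_2$; and (iii) the Riemannian diameter of $\hat{D}_{k,r,p}^{(b,\rho),H}$ is at most $\frac{4N\epsilon_2}{\|\rho_b\|}$. For (iii), I would use that the Carnot--Carath\'{e}odory metric on $\LieN^+$ dominates the Riemannian distance (the CC infimum is taken over a strictly smaller class of curves), so CC balls sit inside Riemannian balls of the same radius. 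Chaining these three bounds gives $|\log h(v_j(u)) - \log h(v_j(u'))| \leq \frac{4EN\epsilon_2}{c_0\kappa_2^{m_2}}$, and the first inequality in \cref{eqn:Constantm2} forces this to be strictly less than $\log 2$, yielding the ratio bound after exponentiating.

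For the dichotomy, I would run the same style of estimate with $f := \|H \circ v_j\|_2$ and $g := h \circ v_j$. Using $\|(dH)_u\|_{\mathrm{op}} \leq E\|\rho_b\|h(u)$ from \cref{itm:FrameFlowLogLipschitzh} together with the contraction of $v_j$, the diameter bound, and the ratio bound on $g$ already established, I expect to obtain $|f(u) - f(u')| \leq \frac{4EN\epsilon_2}{c_0 \kappa_2^{m_2}} \sup_{\hat{D}} g \leq \frac{1}{4}\, g(u)$ for any $u \in \hat{D}$, using the second inequality $\kappa_2^{m_2} > \frac{32EN\epsilon_2}{c_0}$ in \cref{eqn:Constantm2} and the factor of $2$ from the ratio bound. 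A short dichotomy argument then finishes: if some $u_0 \in \hat{D}$ has $f(u_0) < \frac{1}{4} g(u_0)$, then for every $u \in \hat{D}$ the triangle inequality combined with the ratio bound yields $f(u) \leq \frac{1}{4} g(u_0) + \frac{1}{4} g(u) \leq \frac{3}{4} g(u)$, which is \cref{alt:FrameFlowHLessThan3/4h}; otherwise \cref{alt:FrameFlowHGreaterThan1/4h} holds tautologically.

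The only point requiring genuine care is the CC-versus-Riemannian comparison used in step (iii) above, since the balls $\hat{D}_{k,r,p}^{(b,\rho),H}$ are CC-balls while the log-Lipschitz and $C^1$ bounds on $h$ and $H$ are stated in the ambient Riemannian metric. Fortunately the inequality points the right way, so this is not a serious obstacle; the rest is a routine propagation of the constants fixed in \cref{sec:ConstructionOfDolgopyatOperators}.
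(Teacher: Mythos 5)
Your argument is correct and is essentially the paper's (i.e.\ \cite[Lemma 9.8]{SW21}'s) proof: the log-Lipschitz bound from $K_{E\|\rho_b\|}(\tilde{U})$, respectively \cref{itm:FrameFlowLogLipschitzh}, combined with the contraction $\|(dv_j)_u\|_{\mathrm{op}} \leq \frac{1}{c_0\kappa_2^{m_2}}$ and the CC-diameter bound $\frac{4N\epsilon_2}{\|\rho_b\|}$ of $\hat{D}_{k,r,p}^{(b,\rho),H}$, gives oscillation bounds that the choices $\kappa_2^{m_2} > \frac{4EN\epsilon_2}{c_0\log 2}$ and $\kappa_2^{m_2} > \frac{32EN\epsilon_2}{c_0}$ in \cref{eqn:Constantm2} were made to absorb, and your CC-versus-Riemannian comparison (taking a near-minimizing horizontal path inside the CC ball, whose Riemannian length is at most its CC length) is exactly the right way to handle the only new wrinkle relative to the Euclidean setting of \cite{SW21}. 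The only cosmetic slip is that the inequalities you invoke are the second and third entries of the maximum in \cref{eqn:Constantm2}, not the first and second, but the inequalities themselves are quoted correctly.
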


For any $k \geq 2$, denote by $\Theta(w_1, w_2) = \arccos\left(\frac{\langle w_1, w_2\rangle}{\|w_1\| \cdot \|w_2\|}\right) \in [0, \pi]$ the angle between $w_1, w_2 \in \mathbb R^k \setminus \{0\}$, where we use the standard inner product and norm.

\begin{lemma}
\label{lem:StrongTriangleInequality}
Let $k \geq 2$. If $w_1, w_2 \in \mathbb R^k \setminus \{0\}$ such that $\Theta(w_1, w_2) \geq \alpha$ and $\frac{\|w_1\|}{\|w_2\|} \leq L$ for some $\alpha \in [0, \pi]$ and $L \geq 1$, then we have
\begin{align*}
\|w_1 + w_2\| \leq \left(1 - \frac{\alpha^2}{16L}\right)\|w_1\| + \|w_2\|.
\end{align*}
\end{lemma}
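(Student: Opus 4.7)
The plan is to square both sides of the desired inequality and reduce to a quantitative trigonometric estimate. Setting $\Theta = \Theta(w_1, w_2)$ and $c = \frac{\alpha^2}{16L}$, I first note that $c \leq \frac{\pi^2}{16} < 1$ (using $\alpha \leq \pi$ and $L \geq 1$), so the right-hand side $(1-c)\|w_1\| + \|w_2\|$ is positive and the squaring step is justified. The law of cosines gives $\|w_1 + w_2\|^2 = \|w_1\|^2 + 2\|w_1\|\|w_2\|\cos\Theta + \|w_2\|^2$, while the square of the right-hand side is $(1-c)^2\|w_1\|^2 + 2(1-c)\|w_1\|\|w_2\| + \|w_2\|^2$. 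Subtracting, cancelling $\|w_2\|^2$, and dividing through by $\|w_1\| > 0$ reduces the desired inequality to
\begin{align*}
(2c - c^2)\|w_1\| \leq 2\|w_2\|\bigl((1 - c) - \cos\Theta\bigr).
\end{align*}
Applying $\|w_1\| \leq L\|w_2\|$ to the left and rearranging, it then suffices to prove
\begin{align*}
(2c - c^2)L + 2c \leq 2\bigl(1 - \cos\Theta\bigr).
\end{align*}

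For the upper bound on the left-hand side, I drop the $-c^2 L$ term and use $L + 1 \leq 2L$ (valid since $L \geq 1$) to get $(2c - c^2)L + 2c \leq 2c(L + 1) \leq 4cL = \frac{\alpha^2}{4}$. For the lower bound on the right-hand side, the monotonicity of cosine on $[0, \pi]$ together with $\Theta \geq \alpha$ gives $1 - \cos\Theta \geq 1 - \cos\alpha = 2\sin^2(\alpha/2)$; combining with the Jordan-type inequality $\sin x \geq \frac{2x}{\pi}$ for $x \in [0, \pi/2]$ yields $2(1 - \cos\Theta) \geq \frac{4\alpha^2}{\pi^2}$. Since $\pi^2 < 16$, we have $\frac{4\alpha^2}{\pi^2} > \frac{\alpha^2}{4}$, which closes the chain of inequalities.

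There is no substantive obstacle — the lemma is a quantitative sharpening of the triangle inequality capturing that a positive angular separation forces a definite loss in the sum bound on the larger vector. The constant $\tfrac{1}{16}$ is chosen for numerical convenience rather than sharpness; the only care required is to verify $1 - c > 0$ so that squaring is legal, and to choose the numerical constants so that the bound $\frac{\alpha^2}{4}$ on the left comfortably beats the trigonometric lower bound $\frac{4\alpha^2}{\pi^2}$ on the right.
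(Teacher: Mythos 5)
Your proof is correct: the squaring step is legitimate since $c = \tfrac{\alpha^2}{16L} \leq \tfrac{\pi^2}{16} < 1$ makes the right-hand side positive, the law-of-cosines reduction to $(2c - c^2)L + 2c \leq 2(1-\cos\Theta)$ is accurate (and your sufficient condition automatically forces $(1-c)-\cos\Theta \geq 0$, so the use of $\|w_1\| \leq L\|w_2\|$ is sound), and the chain $\tfrac{\alpha^2}{4} \leq \tfrac{4\alpha^2}{\pi^2} \leq 2(1-\cos\Theta)$ via Jordan's inequality closes the argument. The paper gives no proof at all, remarking only that the lemma follows by elementary trigonometry, so your argument supplies exactly the kind of elementary verification intended.
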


\begin{lemma}
\label{lem:chiLessThan1}
Let $\xi \in \mathbb C$ with $|a| < a_0'$ and $(b, \rho) \in \widehat{M}_0(b_0)$. Suppose $H \in \mathcal{V}_\rho(\tilde{U})$ and $h \in K_{E\|\rho_b\|}(\tilde{U})$ satisfy \cref{itm:FrameFlowDominatedByh,itm:FrameFlowLogLipschitzh} in \cref{thm:FrameFlowDolgopyat}. For all $(k, r) \in \Xi_1(b, \rho)$, denoting $j_{k, r}^{(b, \rho), H}$ by $j$, there exists $(p, l) \in \Xi_2$ such that $\chi_{j, l}^{[\xi, \rho, H, h]}(u) \leq 1$ for all $u \in \hat{D}_{k, r, p}^{(b, \rho), H}$.
\end{lemma}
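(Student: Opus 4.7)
The plan is a two-case analysis driven by \cref{lem:FrameFlowHTrappedByh}. Fix $(k,r)\in\Xi_1(b,\rho)$, set $j = j_{k,r}^{(b,\rho),H}$, $x_p = x_{k,r,p}^{(b,\rho),H}$, and abbreviate
\[
B_l(u) = e^{f_{\alpha_l}^{(a)}(v_l(u))} \rho_b(\Phi^{\alpha_l}(v_l(u))^{-1}) H(v_l(u)), \qquad b_l(u) = e^{f_{\alpha_l}^{(a)}(v_l(u))} h(v_l(u)),
\]
for $l\in\{0,j\}$, so that $\|B_l\|_2\leq b_l$ (from $\|H\|\leq h$) and $\chi_{j,l}^{[\xi,\rho,H,h]}$ equals $\|B_0+B_j\|_2/((1-N\mu)b_l+b_{l'})$. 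If \cref{alt:FrameFlowHLessThan3/4h} of \cref{lem:FrameFlowHTrappedByh} holds for some $(k,r,p,l)$, I would select that $(p,l)$ and conclude at once: the triangle inequality gives $\|B_0+B_j\|_2\leq \tfrac{3}{4}b_l+b_{l'}$ on $\hat D_{k,r,p}^{(b,\rho),H}$, which is bounded by $(1-N\mu)b_l+b_{l'}$ because $\mu<\tfrac{1}{4N}$ by \cref{eqn:Constantmu}. So the only substantive case is when \cref{alt:FrameFlowHGreaterThan1/4h} holds for all four $(p,l)\in\Xi_2$, forcing $\|B_l\|_2\geq \tfrac{1}{4}b_l$ everywhere in sight.

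In this main case I would extract the oscillation via \cref{lem:PartnerPointInZariskiDenseLimitSetForBPBound}. Let $\omega = \rho_b(\Phi^{\alpha_0}(v_0(x_1))^{-1})H(v_0(x_1))/\|H(v_0(x_1))\|_2$ be the unit direction of $B_0(x_1)$ used to select $x_2$, and set $R_0 := \rho_b(\Phi^{\alpha_0}(v_0(x_2))^{-1}\Phi^{\alpha_0}(v_0(x_1)))$. A short calculation using the definition of $\BP_j$ and the unitarity of $R_0$ shows that
\[
R_0^{-1}B_0(x_2)\approx B_0(x_1), \qquad R_0^{-1}B_j(x_2)\approx \rho_b(\BP_j(x_1,x_2)) B_j(x_1),
\]
with the $\approx$ hiding slow-variation errors of order $E\epsilon_2$ from $H$, $h$ and $e^{f^{(a)}}$, which are negligible by \cref{eqn:Constantepsilon2}. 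Consequently the angular shift $|\Theta(B_0(x_2),B_j(x_2))-\Theta(B_0(x_1),B_j(x_1))|$ is controlled by how much $\rho_b(\BP_j(x_1,x_2))$ tilts $\omega$. Replacing $\BP_j(x_1,x_2)$ by $\exp(Z)$ via \cref{lem:ComparingExpWithBP} (the quadratic error absorbed by $\epsilon_1\ll\delta_1$ of \cref{eqn:Constantepsilon1}) and applying \cref{lem:PartnerPointInZariskiDenseLimitSetForBPBound} at this $\omega$ produces $\|(\rho_b(\BP_j(x_1,x_2))-I)\omega\|_2 \geq 6\delta_1\epsilon_1$; combined with the lower bound $\|B_j\|_2\geq\tfrac{1}{4}b_j$ from \cref{alt:FrameFlowHGreaterThan1/4h}, this converts into an angular gap $|\Theta(B_0(x_2),B_j(x_2))-\Theta(B_0(x_1),B_j(x_1))| \geq \arccos(1-(\delta_1\epsilon_1)^2/2)$.

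From this gap I would pick $p\in\{1,2\}$ maximizing $\Theta(B_0(x_p),B_j(x_p))$; after absorbing the smoothness corrections, this angle remains $\geq \alpha := \tfrac{1}{4}\arccos(1-(\delta_1\epsilon_1)^2/2)$ throughout $\hat D_{k,r,p}^{(b,\rho),H}$. I then choose $l\in\{1,2\}$ so that the corresponding $\|B_l\|$ is the smaller of $\|B_0\|,\|B_j\|$ and apply \cref{lem:StrongTriangleInequality} with $L$ an upper bound on $b_l/b_{l'}$ -- controlled by $e^{2m_2 T_0}$ via the definition of $T_0$ and the cone $K_{E\|\rho_b\|}$. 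Using \cref{alt:FrameFlowHGreaterThan1/4h} once more to transfer the $(1-\alpha^2/16L)$ savings from $\|B_l\|$ to $(1-\alpha^2/64L)$ savings on $b_l$ yields
\[
\|B_0(u)+B_j(u)\|_2 \leq \bigl(1 - \tfrac{\alpha^2}{64L}\bigr) b_l(u) + b_{l'}(u),
\]
so that $\chi_{j,l}^{[\xi,\rho,H,h]}(u)\leq 1$ is equivalent to $\tfrac{\alpha^2}{64L}\geq N\mu$, which is precisely the final constraint on $\mu$ in \cref{eqn:Constantmu}.

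The main obstacle will be the middle step: upgrading the scalar estimate $\|(\rho_b(\BP_j)-I)\omega\|_2\geq 6\delta_1\epsilon_1$ to a quantitative angular separation between the actual summands $B_0, B_j$. This requires simultaneous control of three small errors -- the quadratic tail of \cref{lem:ComparingExpWithBP}, the slow variation of $H, h$ and $e^{f^{(a)}}$ over balls of radius $O(\epsilon_2/\|\rho_b\|)$, and the mismatch between the vector $\omega$ (aligned with $B_0(x_1)$) and the direction of $B_j(x_1)$ to which the rotation $\rho_b(\BP_j)$ is ultimately applied. The first two are suppressed by the hierarchy $\epsilon_2\ll\epsilon_1\ll\delta_1$ fixed in \cref{eqn:Constantepsilon1,eqn:Constantepsilon2}, while the third is handled by the angle-versus-cosine conversion combined with the uniform lower bound $\|B_j\|_2\geq \tfrac{1}{4}b_j$ from \cref{alt:FrameFlowHGreaterThan1/4h}.
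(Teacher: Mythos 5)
Your skeleton is the paper's: dispose of the easy case via \cref{alt:FrameFlowHLessThan3/4h} of \cref{lem:FrameFlowHTrappedByh}, and in the main case combine \cref{lem:PartnerPointInZariskiDenseLimitSetForBPBound} with \cref{lem:ComparingExpWithBP} to get a definite rotation of $\omega$ under $\rho_b(\BP_j(x_1, x_2))$, convert this into an angular separation of the two summands on one of the balls, bound the relative size by $O(e^{2m_2 T_0})$, and close with \cref{lem:StrongTriangleInequality} and the last constraint on $\mu$ in \cref{eqn:Constantmu}. However, two of your steps do not work as stated. First, the claimed lower bound on the angular difference $|\Theta(B_0(x_2), B_j(x_2)) - \Theta(B_0(x_1), B_j(x_1))|$ is false in general: \cref{lem:PartnerPointInZariskiDenseLimitSetForBPBound} tells you that $\rho_b(\BP_j(x_1, x_2))$ moves the direction $\omega$ of $B_0(x_1)$, whereas in your identity the rotation is applied to (the conjugated direction of) $B_j(x_1)$; if that direction happens to be fixed by the rotation, the two angles coincide even though each may be large. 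What does follow, by unitarity and the triangle inequality $\|R\omega - \omega\|_2 \leq \|\omega - w\|_2 + \|Rw - \omega\|_2$ applied to the relevant unit vectors $w$, is a lower bound on the \emph{sum} of the separations of the normalized summands at $x_1$ and at $x_2$, hence on the maximum of the two --- which is all your subsequent argument uses, and which is exactly how the paper proceeds (separation $\geq 2\delta_1\epsilon_1$ of the unit vectors at one of the two points, then the cosine law). Your proposed handling of this mismatch via ``angle-versus-cosine conversion plus $\|B_j\|_2 \geq \tfrac{1}{4} b_j$'' does not address it.

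Second, your error budget misplaces the slow-variation estimates. The comparison between the data at $x_1$ and at $x_2$ takes place over a Carnot--Carath\'{e}odory distance of order $\epsilon_1/\|\rho_b\|$ (indeed $\check{x}_2 \notin B^{\mathrm{CC}}_{\varepsilon_3\epsilon_1/(2\|\rho_b\|)}(\check{x}_1)$), not $\epsilon_2/\|\rho_b\|$, and \cref{itm:FrameFlowLogLipschitzh} alone only gives a directional error of order $E\epsilon_1$ for $H \circ v_\ell$ over that range, which is not negligible against $\delta_1\epsilon_1$ since $E \gg \delta_1$. The missing ingredient is the contraction of the inverse branches $v_\ell$ from \cref{lem:SigmaHyperbolicity} combined with the choice of $m_2$ in \cref{eqn:Constantm2} and \cref{alt:FrameFlowHGreaterThan1/4h}: together these make the unit directions $H(v_\ell(\cdot))/\|H(v_\ell(\cdot))\|_2$ Lipschitz with constant $\delta_1\|\rho_b\|$, so the $x_1$-to-$x_2$ errors are at most $\delta_1\epsilon_1/2$ each. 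The $\epsilon_2$-scale and \cref{eqn:Constantepsilon2} enter only afterwards, when propagating the separation from the single point $x_p$ to all of $\hat{D}_{k, r, p}^{(b, \rho), H}$, where the non-contracting factors $\rho_b(\Phi^{\alpha_\ell}(v_\ell(\cdot))^{-1})$, Lipschitz of order $\|\rho_b\|$, must also be compared. With these two corrections (and the bookkeeping constants adjusted), your plan becomes the paper's proof; also note that your choice of $l$ must be made at a base point and only yields a one-sided relative-size bound, exactly as in the paper, since the two-sided ratio $h(v_0)/h(v_j)$ is not uniformly bounded in $\|\rho_b\|$.
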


\begin{proof}
Let $\xi \in \mathbb C$ with $|a| < a_0'$ and $(b, \rho) \in \widehat{M}_0(b_0)$. Suppose $H \in \mathcal{V}_\rho(\tilde{U})$ and $h \in K_{E\|\rho_b\|}(\tilde{U})$ satisfy \cref{itm:FrameFlowDominatedByh,itm:FrameFlowLogLipschitzh} in \cref{thm:FrameFlowDolgopyat}. Let $(k, r) \in \Xi_1(b, \rho)$. Denote $j_{k, r}^{(b, \rho), H}$ by $j$, $x_{k, r, 1}^{(b, \rho)}$ by $x_1$, $x_{k, r, 2}^{(b, \rho), H}$ by $x_2$, and $\hat{D}_{k, r, p}^{(b, \rho), H}$ by $\hat{D}_p$. Now, suppose \cref{alt:FrameFlowHLessThan3/4h} in \cref{lem:FrameFlowHTrappedByh} holds for $(k, r, p, l) \in \Xi(b, \rho)$ for some $(p, l) \in \Xi_2$. Then it is easy to check that $\chi_{j, l}^{[\xi, \rho, H, h]}(u) \leq 1$ for all $u \in \hat{D}_p$. Otherwise, \cref{alt:FrameFlowHGreaterThan1/4h} in \cref{lem:FrameFlowHTrappedByh} holds for $(k, r, 1, 1), (k, r, 1, 2), (k, r, 2, 1), (k, r, 2, 2) \in \Xi(b, \rho)$. We would like to use \cref{lem:StrongTriangleInequality} but first we need to establish bounds on relative angle and relative size. We start with the former. Define $\omega_\ell(u) = \frac{H(v_\ell(u))}{\|H(v_\ell(u))\|_2}$ and $\phi_\ell(u) = \Phi^{\alpha_\ell}(v_\ell(u))$ for all $u \in \tilde{U}_1$ and $\ell \in \{0, j\}$. Let $D = 2\dim(\rho)^2$ and define the map $\varphi: \mathbb R^D \setminus \{0\} \to \mathbb R^D$ by $\varphi(w) = \frac{w}{\|w\|}$ for all $w \in \mathbb R^D \setminus \{0\}$, where we use the standard inner product and norm on $\mathbb R^D$. Note that $\|(d\varphi)_w\|_{\mathrm{op}} = \frac{1}{\|w\|}$ for all $w \in \mathbb R^D$. We can write $\omega_\ell = \varphi \circ H \circ v_\ell$ using the isomorphism $V_\rho^{\oplus \dim(\rho)} \cong \mathbb R^D$ of real vector spaces. Then using \cref{lem:SigmaHyperbolicity,eqn:Constantm2}, we calculate that
\begin{align*}
\|(d\omega_\ell)_u\|_{\mathrm{op}} &\leq \|(d\varphi)_{H(v_\ell(u))}\|_{\mathrm{op}} \|(dH)_{v_\ell(u)}\|_{\mathrm{op}} \|(dv_\ell)_u\|_{\mathrm{op}} \\
&\leq \frac{1}{\|H(v_\ell(u))\|_2} \cdot E\|\rho_b\|h(v_\ell(u)) \cdot \frac{1}{c_0\kappa_2^{m_2}} \\
&\leq \frac{4E\|\rho_b\|}{c_0\kappa_2^{m_2}} \leq \delta_1\|\rho_b\|
\end{align*}
for all $u \in \hat{D}_p$, $\ell \in \{0, j\}$, and $p \in \{1, 2\}$. In other words, $\omega_0$ and $\omega_j$ are Lipschitz on $\hat{D}_p$ with Lipschitz constant $\delta_1\|\rho_b\|$ for all $p \in \{1, 2\}$. Define
\begin{align*}
\begin{split}
V_\ell(u) &= e^{f_{\alpha_\ell}^{(a)}(v_\ell(u))} \rho_b(\phi_\ell(u)^{-1})H(v_\ell(u)), \\
\hat{V}_\ell(u) &= \frac{V_\ell(u)}{\|V_\ell(u)\|_2} = \rho_b(\phi_\ell(u)^{-1})\omega_\ell(u),
\end{split}
\qquad
\text{for all $u \in \tilde{U}_1$ and $\ell \in \{0, j\}$}.
\end{align*}
Since $\omega_0$ and $\omega_j$ are Lipschitz and $d(x_1, x_2) \leq \frac{\epsilon_1}{2\|\rho_b\|}$, we have
\begin{align*}
&\big\|\hat{V}_0(x_2) - \hat{V}_j(x_2)\big\|_2 \\
={}&\|\rho_b(\phi_0(x_2)^{-1})\omega_0(x_2) - \rho_b(\phi_j(x_2)^{-1})\omega_j(x_2)\|_2 \\
={}&\|\rho_b(\phi_j(x_2)\phi_0(x_2)^{-1})\omega_0(x_2) - \omega_j(x_2)\|_2 \\
\geq{}&\|\rho_b(\phi_j(x_2)\phi_0(x_2)^{-1})\omega_0(x_1) - \omega_j(x_1)\|_2 \\
{}&- \|\rho_b(\phi_j(x_2)\phi_0(x_2)^{-1})\omega_0(x_2) - \rho_b(\phi_j(x_2)\phi_0(x_2)^{-1})\omega_0(x_1)\|_2 \\
{}&- \|\omega_j(x_2) - \omega_j(x_1)\|_2 \\
={}&\|\rho_b(\phi_j(x_2)\phi_0(x_2)^{-1})\omega_0(x_1) - \omega_j(x_1)\|_2 - \|\omega_0(x_2) - \omega_0(x_1)\|_2 \\
{}&- \|\omega_j(x_2) - \omega_j(x_1)\|_2 \\
\geq{}&\|\rho_b(\phi_j(x_2)\phi_0(x_2)^{-1})\omega_0(x_1) - \rho_b(\phi_j(x_1)\phi_0(x_1)^{-1})\omega_0(x_1)\|_2 \\
{}&- \|\rho_b(\phi_j(x_1)\phi_0(x_1)^{-1})\omega_0(x_1) - \omega_j(x_1)\|_2 - \delta_1\epsilon_1 \\
={}&\|\rho_b(\phi_0(x_1)^{-1})\omega_0(x_1) - \rho_b(\phi_0(x_1)^{-1}\phi_0(x_2)\phi_j(x_2)^{-1}\phi_j(x_1)\phi_0(x_1)^{-1})\omega_0(x_1)\|_2 \\
{}&- \|\rho_b(\phi_0(x_1)^{-1})\omega_0(x_1) - \rho_b(\phi_j(x_1)^{-1})\omega_j(x_1)\|_2 - \delta_1\epsilon_1 \\
\geq{}&\|\rho_b(\phi_0(x_1)^{-1})\omega_0(x_1) - \rho_b(\BP_j(x_1, x_2))\rho_b(\phi_0(x_1)^{-1})\omega_0(x_1)\|_2 \\
&{}- \big\|\hat{V}_0(x_1) - \hat{V}_j(x_1)\big\|_2 - \delta_1\epsilon_1.
\end{align*}
Denote $\omega = \rho_b(\phi_0(x_1)^{-1})\omega_0(x_1)$ and $Z = d(\BP_{j, x_1} \circ \iota(n_{\check{x}_1}^+) \circ \Psi)_0(z)$ where we take $z = \bigl(0, \bigl(n_{\check{x}_1}^+\bigr)^{-1} \cdot \check{x}_2\bigr) \in \T_0(\LieN^+)$. Recalling the definition of $\Psi$ and that $\iota(n_{\check{x}_1}^+)$ is an isometry, we can conclude that $z' = d(\iota(n_{\check{x}_1}^+) \circ \Psi)_0(z) \in \T_{x_1}(\mathcal{U})$ such that the geodesic $\gamma_{z'}$ has end points $\gamma_{z'}(0) = x_1$ and $\gamma_{z'}(1) = x_2$. Hence, continuing to bound the first term above, we can apply \cref{lem:PartnerPointInZariskiDenseLimitSetForBPBound,lem:ComparingExpWithBP,eqn:Constantepsilon1} to get
\begin{align*}
&\|\omega - \rho_b(\BP_j(x_1, x_2))(\omega)\|_2 \\
\geq{}&\|\omega - \rho_b(\exp(Z))(\omega)\|_2 - \|\rho_b(\exp(Z))(\omega) - \rho_b(\BP_j(x_1, x_2))(\omega)\|_2 \\
\geq{}&\|\omega - \exp(d\rho_b(Z))(\omega)\|_2 - \|\rho_b\| \cdot d_{AM}(\exp(Z), \BP_j(x_1, x_2)) \\
\geq{}&\|d\rho_b(Z)(\omega)\|_2 - \|\rho_b\|^2 \|Z\|^2 - \|\rho_b\| \cdot d_{AM}(\exp(Z), \BP_j(x_1, x_2)) \\
\geq{}&\|d\rho_b(Z)(\omega)\|_2 - \|\rho_b\|^2 C_{\BP}^2 d(x_1, x_2)^2 - C_{\exp, \BP} \cdot \|\rho_b\| \cdot d(x_1, x_2)^2 \\
\geq{}&7\delta_1\epsilon_1 - \delta_1\epsilon_1 - \delta_1\epsilon_1 \geq 5\delta_1\epsilon_1.
\end{align*}
Hence, we have
\begin{align*}
\big\|\hat{V}_0(x_1) - \hat{V}_j(x_1)\big\|_2 + \big\|\hat{V}_0(x_2) - \hat{V}_j(x_2)\big\|_2 \geq 4\delta_1\epsilon_1.
\end{align*}
Then, $\big\|\hat{V}_0(x_p) - \hat{V}_j(x_p)\big\|_2 \geq 2\delta_1\epsilon_1$ for some $p \in \{1, 2\}$. Recalling estimates from \cref{lem:FrameFlowPreliminaryLogLipschitz}, \cref{eqn:Constantepsilon2}, that $\rho_b$ is a unitary representation, and that $\omega_\ell$ is Lipschitz, we have
\begin{align*}
&\big\|\hat{V}_\ell(x_p) - \hat{V}_\ell(u)\big\|_2 \\
\leq{}&\|(\rho_b(\phi_\ell(x_p)^{-1}) - \rho_b(\phi_\ell(u)^{-1}))\omega_\ell(x_p)\|_2 + \|\rho_b(\phi_\ell(u)^{-1})(\omega_\ell(x_p) - \omega_\ell(u))\|_2 \\
\leq{}&A_0 \|\rho_b\| d(x_p, u) + \delta_1\|\rho_b\| d(x_p, u) \\
\leq{}&(A_0 + \delta_1)\|\rho_b\| \cdot \frac{2N\epsilon_2}{\|\rho_b\|} = 2N\epsilon_2(A_0 + \delta_1) \leq \frac{\delta_1\epsilon_1}{2}
\end{align*}
for all $u \in \hat{D}_p$ and $\ell \in \{0, j\}$. Hence $\big\|\hat{V}_0(u) - \hat{V}_j(u)\big\|_2 \geq \delta_1\epsilon_1 \in (0, 1)$ for all $u \in \hat{D}_p$. Then using the cosine law, the required bound for relative angle is
\begin{align*}
\Theta(V_0(u), V_j(u)) = \Theta(\hat{V}_0(u), \hat{V}_j(u)) \geq \arccos\left(1 - \frac{(\delta_1\epsilon_1)^2}{2}\right) \in (0, \pi).
\end{align*}
For the bound on relative size, let $(\ell, \ell') \in \{(0, j), (j, 0)\}$ such that $h(v_\ell(u_0)) \leq h(v_{\ell'}(u_0))$ for some $u_0 \in \hat{D}_p$. Let $l = 1$ if $(\ell, \ell') = (0, j)$ and $l = 2$ if $(\ell, \ell') = (j, 0)$. Recalling that $\rho_b$ is a unitary representation, by \cref{lem:FrameFlowHTrappedByh}, we have
\begin{align*}
\frac{\|V_\ell(u)\|_2}{\|V_{\ell'}(u)\|_2} &= \frac{e^{f_{\alpha_\ell}^{(a)}(v_\ell(u))}\|H(v_\ell(u))\|_2}{e^{f_{\alpha_{\ell'}}^{(a)}(v_{\ell'}(u))}\|H(v_{\ell'}(u))\|_2} \leq \frac{4e^{f_{\alpha_\ell}^{(a)}(v_\ell(u)) - f_{\alpha_{\ell'}}^{(a)}(v_{\ell'}(u))}h(v_{\ell}(u))}{h(v_{\ell'}(u))} \\
&\leq \frac{16e^{2m_2 T_0}h(v_{\ell}(u_0))}{h(v_{\ell'}(u_0))} \leq 16e^{2m_2 T_0}
\end{align*}
for all $u \in \hat{D}_p$, which is the required bound on relative size. Using \cref{lem:StrongTriangleInequality}, \cref{eqn:Constantmu}, and $\|H\| \leq h$ for $\|V_\ell(u) + V_{\ell'}(u)\|_2$ gives $\chi_{j, l}^{[\xi, \rho, H, h]}(u) \leq 1$ for all $u \in \hat{D}_p$.
\end{proof}

With \cref{lem:chiLessThan1} in hand, it is straightforward to derive the following lemma whose proof is exactly as in \cite[Lemma 9.11]{SW21}.

\begin{lemma}
For all $\xi \in \mathbb C$ with $|a| < a_0'$, if $(b, \rho) \in \widehat{M}_0(b_0)$, and if $H \in \mathcal{V}_\rho(\tilde{U})$ and $h \in K_{E\|\rho_b\|}(\tilde{U})$ satisfy \cref{itm:FrameFlowDominatedByh,itm:FrameFlowLogLipschitzh} in \cref{thm:FrameFlowDolgopyat}, then there exists $J \in \mathcal{J}(b, \rho)$ such that
\begin{align*}
\big\|\tilde{\mathcal{M}}_{\xi, \rho}^m(H)(u)\big\|_2 \leq \mathcal{N}_{a, J}^H(h)(u) \qquad \text{for all $u \in \tilde{U}$}.
\end{align*}
\end{lemma}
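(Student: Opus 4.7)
The plan is to produce the required dense subset $J \in \mathcal{J}(b, \rho)$ by applying \cref{lem:chiLessThan1} separately to each $(k, r) \in \Xi_1(b, \rho)$: this yields a pair $(p(k, r), l(k, r)) \in \Xi_2$ with $\chi_{j_{k, r}^{(b, \rho), H}, l(k, r)}^{[\xi, \rho, H, h]} \leq 1$ on the enlarged ball $\hat{D}_{k, r, p(k, r)}^{(b, \rho), H}$. I would set $J = \{(k, r, p(k, r), l(k, r)) : (k, r) \in \Xi_1(b, \rho)\}$, which is dense by construction. It then remains to verify the pointwise bound $\bigl\|\tilde{\mathcal{M}}_{\xi, \rho}^m(H)(u)\bigr\|_2 \leq \tilde{\mathcal{L}}_a^m(\beta_J^H h)(u)$ for every $u \in \tilde{U}$.

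Fix $u \in \tilde{U}$ and expand $\tilde{\mathcal{M}}_{\xi, \rho}^m(H)(u) = \sum_\alpha V_\alpha(u)$ via \cref{eqn:k^thIterationOfCongruenceTransferOperatorOfType_rho}, where $V_\alpha(u) = e^{f_\alpha^{(a)}(u')} \rho_b(\Phi^\alpha(u')^{-1}) H(u')$ and $u' = \sigma^{-\alpha}(u)$. For each $(k, r) \in \Xi_1(b, \rho)$ with $u \in \tilde{U}_k$, two of the summands have preimages lying above $\mathtt{v}_k(u) \in \mathtt{C}_k$ via the sections $v_0$ and $v_j$ (with $j = j_{k, r}^{(b, \rho), H}$); denote them by $u'_0 = v_0(\mathtt{v}_k(u))$ and $u'_j = v_j(\mathtt{v}_k(u))$, with corresponding length-$m$ admissible sequences $\alpha^{(0)}_k$ and $\alpha^{(j)}_k$. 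Using the cocycle identities for $f_\alpha^{(a)}$ and $\Phi^\alpha$ together with unitarity of $\rho_b$, the common $\mathtt{C}_k$-factor can be pulled out, giving
\begin{align*}
\bigl\|V_{\alpha^{(0)}_k}(u) + V_{\alpha^{(j)}_k}(u)\bigr\|_2 = e^{f_{\mathtt{C}_k}^{(a)}(\mathtt{v}_k(u))} \cdot \|V_0(\mathtt{v}_k(u)) + V_j(\mathtt{v}_k(u))\|_2
\end{align*}
in the notation of \cref{lem:chiLessThan1}. When $\mathtt{v}_k(u) \in \hat{D}_{k, r, p(k, r)}^{(b, \rho), H}$, that lemma gives $\chi_{j, l(k, r)} \leq 1$, which after multiplying back the pulled-out factor yields an improved triangle inequality carrying a $(1 - N\mu)$-deficit on the $l(k, r)$-side of the bound.

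The next step is to convert this single-pair improvement into a smooth pointwise inequality by interpolating via the bump $\psi_{k, r, p(k, r)}^{(b, \rho), H}$, which takes values in $[0, 1]$, equals $1$ on $\slashed{D}_{k, r, p(k, r)}^{(b, \rho), H}$, and is supported in $\overline{D_{k, r, p(k, r)}^{(b, \rho), H}} \subset \hat{D}_{k, r, p(k, r)}^{(b, \rho), H}$: the ordinary triangle inequality is retained away from the bump support while the improvement activates on it, with the full deficit attained where $\psi = 1$. Summing these refined pair estimates over $(k, r) \in \Xi_1(b, \rho)$ and bounding all remaining summands by the plain triangle inequality $\|V_\alpha(u)\|_2 \leq e^{f_\alpha^{(a)}(u')} h(u')$, the total improvement amounts to
\begin{align*}
N\mu \sum_{(k, r, p, l) \in J} \tilde{\mathcal{L}}_a^m\bigl(\tilde{\psi}_{(k, r, p, l)}^{(b, \rho), H} h\bigr)(u).
\end{align*}
By \cref{lem:NumberOfIntersectingBallsLessThanOrEqualToN}, the supports of the bumps appearing in $\beta_J^H$ have overlap multiplicity at most $N$, accounting for an $N$-fold overcounting in this sum; absorbing the factor $N$ then produces exactly $\tilde{\mathcal{L}}_a^m(h)(u) - \mu \sum_{(k, r, p, l) \in J} \tilde{\mathcal{L}}_a^m\bigl(\tilde{\psi}_{(k, r, p, l)}^{(b, \rho), H} h\bigr)(u) = \tilde{\mathcal{L}}_a^m(\beta_J^H h)(u) = \mathcal{N}_{a, J}^H(h)(u)$.

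The principal subtlety is this bookkeeping between the $N\mu$-gain per paired $(k, r)$ and the $N$-fold overlap of bumps: the two $N$-factors are calibrated in the fixed constants (notably $\mu < 1/(4N)$) so as to cancel exactly, leaving the clean $\mu$-coefficient in $\beta_J^H$. Modulo this accounting, the argument is a transcription of the proof of \cite[Lemma 9.11]{SW21} into the present rank-one setting, since the essential spectral-geometric input has already been absorbed into \cref{lem:chiLessThan1} via LNIC, NCP, and the lower bound supplied by \cref{lem:maActionLowerBound}.
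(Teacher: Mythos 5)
Your skeleton — choosing one tuple per $(k,r)$ via \cref{lem:chiLessThan1} to get a dense $J$, pairing the two summands over the sections $v_0$ and $v_{j_{k,r}^{(b,\rho),H}}$ above $\mathtt{v}_k(u)$, and bounding everything else by the plain triangle inequality — is the right one (the paper itself defers to \cite[Lemma 9.11]{SW21}), but the central bookkeeping step is not valid as written. The improvement furnished by \cref{lem:chiLessThan1} is a statement about a \emph{single} pair of summands; for different $(k,r)$ the pairs share the branch through $v_0$ (and for fixed $k$, different $r$ sit over the very same point $\mathtt{v}_k(u)$), so you cannot ``sum these refined pair estimates over $(k,r)\in\Xi_1(b,\rho)$'': at any given point the triangle inequality yields the gain of only one pair. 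Hence the claim that ``the total improvement amounts to $N\mu\sum_{(k,r,p,l)\in J}\tilde{\mathcal L}_a^m\bigl(\tilde\psi^{(b,\rho),H}_{(k,r,p,l)}h\bigr)(u)$,'' followed by absorbing an ``$N$-fold overcounting,'' does not describe an estimate you have actually established. Note also that the gain from \cref{lem:chiLessThan1} is a constant $N\mu$-deficit on one branch valid on all of $\hat D_{k,r,p}^{(b,\rho),H}$, not a quantity localized by the bump; the bumps localize only the \emph{subtraction} hidden in $\beta_J^H$.

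What is actually needed is a pointwise comparison at the level of the $m_2$-step operator. Writing $\tilde{\mathcal M}_{\xi,\rho}^m=\tilde{\mathcal M}_{\xi,\rho}^{m_1}\circ\tilde{\mathcal M}_{\xi,\rho}^{m_2}$ and $\tilde{\mathcal L}_a^m=\tilde{\mathcal L}_a^{m_1}\circ\tilde{\mathcal L}_a^{m_2}$, it suffices to show $\bigl\|\tilde{\mathcal M}_{\xi,\rho}^{m_2}(H)(w)\bigr\|_2\le\tilde{\mathcal L}_a^{m_2}(\beta_J^H h)(w)$ for every length-$m_1$ preimage $w$ of $u$. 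At such a $w$ the subtraction equals $\mu\sum\psi_{k,r,p}^{(b,\rho),H}(w)\,e^{f^{(a)}_{\alpha_\ell}(v_\ell(w))}h(v_\ell(w))$, a sum over the at most $N$ tuples of $J$ whose balls contain $w$, attached to possibly \emph{different} branches $\ell\in\{0\}\cup\{j_{k,r}^{(b,\rho),H}\}$, while the gain available to you is that of a single pair, worth $N\mu\,e^{f^{(a)}_{\alpha_{\ell_0}}(v_{\ell_0}(w))}h(v_{\ell_0}(w))$ on its own deficit branch $\ell_0$. Since these branch weights are not mutually comparable (the relative-size bound in \cref{lem:chiLessThan1} is of order $e^{2m_2T_0}$ and does not help here), you must use the ``for any of those terms'' clause of \cref{lem:NumberOfIntersectingBallsLessThanOrEqualToN}: every active tuple has $w\in\hat D_{k,r,p}^{(b,\rho),H}$, so each of their $\chi\le 1$ inequalities is usable at $w$, and you select the one whose deficit term $e^{f^{(a)}_{\alpha_\ell}(v_\ell(w))}h(v_\ell(w))$ is largest. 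Then $N\mu$ times that maximum dominates $\mu$ times the sum of at most $N$ such terms, and bounding all remaining $m_2$-branches trivially gives the pointwise inequality. This max-versus-sum selection is precisely why the factor $N$ appears in the denominators of $\chi_{j,l}^{[\xi,\rho,H,h]}$ and in the constraint $\mu<\tfrac{1}{4N}$; your write-up elides it, and without it the step from the pairwise gains to $\mathcal N_{a,J}^H(h)$ does not go through.
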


\nocite{*}
\bibliographystyle{alpha_name-year-title}
\bibliography{References}

\begin{thebibliography}{MMO14}

\bibitem[All99]{All99}
Daniel Allcock.
\newblock Reflection groups on the octave hyperbolic plane.
\newblock {\em J. Algebra}, 213(2):467--498, 1999.

\bibitem[AGY06]{AGY06}
Artur Avila, S\'{e}bastien Gou\"{e}zel, and Jean-Christophe Yoccoz.
\newblock Exponential mixing for the {T}eichm\"{u}ller flow.
\newblock {\em Publ. Math. Inst. Hautes \'{E}tudes Sci.}, (104):143--211, 2006.

\bibitem[Bab02]{Bab02}
Martine Babillot.
\newblock On the mixing property for hyperbolic systems.
\newblock {\em Israel J. Math.}, 129:61--76, 2002.

\bibitem[BO12]{BO12}
Yves Benoist and Hee Oh.
\newblock Effective equidistribution of {$S$}-integral points on symmetric
  varieties.
\newblock {\em Ann. Inst. Fourier (Grenoble)}, 62(5):1889--1942, 2012.

\bibitem[Bou18]{Bou18}
Marc Bourdon.
\newblock Mostow type rigidity theorems.
\newblock In {\em Handbook of group actions. {V}ol. {IV}}, volume~41 of {\em
  Adv. Lect. Math. (ALM)}, pages 139--188. Int. Press, Somerville, MA, 2018.

\bibitem[Bow70]{Bow70}
Rufus Bowen.
\newblock Markov partitions for {A}xiom {${\rm A}$} diffeomorphisms.
\newblock {\em Amer. J. Math.}, 92:725--747, 1970.

\bibitem[Bow71]{Bow71}
Rufus Bowen.
\newblock Periodic points and measures for {A}xiom {$A$} diffeomorphisms.
\newblock {\em Trans. Amer. Math. Soc.}, 154:377--397, 1971.

\bibitem[Bow73]{Bow73}
Rufus Bowen.
\newblock Symbolic dynamics for hyperbolic flows.
\newblock {\em Amer. J. Math.}, 95:429--460, 1973.

\bibitem[Bow08]{Bow08}
Rufus Bowen.
\newblock {\em Equilibrium states and the ergodic theory of {A}nosov
  diffeomorphisms}, volume 470 of {\em Lecture Notes in Mathematics}.
\newblock Springer-Verlag, Berlin, revised edition, 2008.
\newblock With a preface by David Ruelle, Edited by Jean-Ren\'{e} Chazottes.

\bibitem[BR75]{BR75}
Rufus Bowen and David Ruelle.
\newblock The ergodic theory of {A}xiom {A} flows.
\newblock {\em Invent. Math.}, 29(3):181--202, 1975.

\bibitem[Che02]{Che02}
N.~Chernov.
\newblock Invariant measures for hyperbolic dynamical systems.
\newblock In {\em Handbook of dynamical systems, {V}ol. 1{A}}, pages 321--407.
  North-Holland, Amsterdam, 2002.

\bibitem[CI99]{CI99}
Kevin Corlette and Alessandra Iozzi.
\newblock Limit sets of discrete groups of isometries of exotic hyperbolic
  spaces.
\newblock {\em Trans. Amer. Math. Soc.}, 351(4):1507--1530, 1999.

\bibitem[Dol98]{Dol98}
Dmitry Dolgopyat.
\newblock On decay of correlations in {A}nosov flows.
\newblock {\em Ann. of Math. (2)}, 147(2):357--390, 1998.

\bibitem[DRS93]{DRS93}
W.~Duke, Z.~Rudnick, and P.~Sarnak.
\newblock Density of integer points on affine homogeneous varieties.
\newblock {\em Duke Math. J.}, 71(1):143--179, 1993.

\bibitem[EO21]{EO21}
Sam Edwards and Hee Oh.
\newblock Spectral gap and exponential mixing on geometrically finite
  hyperbolic manifolds.
\newblock {\em Duke Math. J.}, 170(15):3417--3458, 2021.

\bibitem[EM93]{EM93}
Alex Eskin and Curt McMullen.
\newblock Mixing, counting, and equidistribution in {L}ie groups.
\newblock {\em Duke Math. J.}, 71(1):181--209, 1993.

\bibitem[FS90]{FS90}
L.~Flaminio and R.~J. Spatzier.
\newblock Geometrically finite groups, {P}atterson-{S}ullivan measures and
  {R}atner's rigidity theorem.
\newblock {\em Invent. Math.}, 99(3):601--626, 1990.

\bibitem[Hel70]{Hel70}
Sigur\dj~ur Helgason.
\newblock A duality for symmetric spaces with applications to group
  representations.
\newblock {\em Advances in Math.}, 5:1--154 (1970), 1970.

\bibitem[Hel01]{Hel01}
Sigurdur Helgason.
\newblock {\em Differential geometry, {L}ie groups, and symmetric spaces},
  volume~34 of {\em Graduate Studies in Mathematics}.
\newblock American Mathematical Society, Providence, RI, 2001.
\newblock Corrected reprint of the 1978 original.

\bibitem[Kai90]{Kai90}
Vadim~A. Kaimanovich.
\newblock Invariant measures of the geodesic flow and measures at infinity on
  negatively curved manifolds.
\newblock {\em Ann. Inst. H. Poincar\'{e} Phys. Th\'{e}or.}, 53(4):361--393,
  1990.
\newblock Hyperbolic behaviour of dynamical systems (Paris, 1990).

\bibitem[Kat95]{Kat95}
Tosio Kato.
\newblock {\em Perturbation theory for linear operators}.
\newblock Classics in Mathematics. Springer-Verlag, Berlin, 1995.
\newblock Reprint of the 1980 edition.

\bibitem[KO21]{KO21}
Dubi Kelmer and Hee Oh.
\newblock Shrinking targets for the geodesic flow on geometrically finite
  hyperbolic manifolds.
\newblock {\em J. Mod. Dyn.}, 17:401--434, 2021.

\bibitem[Kha21]{Kha21}
Osama Khalil.
\newblock Mixing, resonances, and spectral gaps on geometrically finite locally
  symmetric spaces.
\newblock 2021.
\newblock Preprint, https://www.math.utah.edu/~khalil/publications/mixing.pdf.

\bibitem[Kim06]{Kim06}
Inkang Kim.
\newblock Length spectrum in rank one symmetric space is not arithmetic.
\newblock {\em Proc. Amer. Math. Soc.}, 134(12):3691--3696, 2006.

\bibitem[LY73]{LY73}
A.~Lasota and James~A. Yorke.
\newblock On the existence of invariant measures for piecewise monotonic
  transformations.
\newblock {\em Trans. Amer. Math. Soc.}, 186:481--488 (1974), 1973.

\bibitem[LP22]{LP22}
Jialun Li and Wenyu Pan.
\newblock Exponential mixing of geodesic flows for geometrically finite
  hyperbolic manifolds with cusps.
\newblock {\em Invent. Math.}, page 1–91, 2022.
\newblock Advance online publication.

\bibitem[MMO14]{MMO14}
Gregory Margulis, Amir Mohammadi, and Hee Oh.
\newblock Closed geodesics and holonomies for {K}leinian manifolds.
\newblock {\em Geom. Funct. Anal.}, 24(5):1608--1636, 2014.

\bibitem[Mar04]{Mar04}
Grigoriy~A. Margulis.
\newblock {\em On some aspects of the theory of {A}nosov systems}.
\newblock Springer Monographs in Mathematics. Springer-Verlag, Berlin, 2004.
\newblock With a survey by Richard Sharp: Periodic orbits of hyperbolic flows,
  Translated from the Russian by Valentina Vladimirovna Szulikowska.

\bibitem[MO15]{MO15}
Amir Mohammadi and Hee Oh.
\newblock Matrix coefficients, counting and primes for orbits of geometrically
  finite groups.
\newblock {\em J. Eur. Math. Soc. (JEMS)}, 17(4):837--897, 2015.

\bibitem[Moo87]{Moo87}
Calvin~C. Moore.
\newblock Exponential decay of correlation coefficients for geodesic flows.
\newblock In {\em Group representations, ergodic theory, operator algebras, and
  mathematical physics ({B}erkeley, {C}alif., 1984)}, volume~6 of {\em Math.
  Sci. Res. Inst. Publ.}, pages 163--181. Springer, New York, 1987.

\bibitem[Mos73]{Mos73}
G.~D. Mostow.
\newblock {\em Strong rigidity of locally symmetric spaces}.
\newblock Annals of Mathematics Studies, No. 78. Princeton University Press,
  Princeton, N.J.; University of Tokyo Press, Tokyo, 1973.

\bibitem[OS13]{OS13}
Hee Oh and Nimish~A. Shah.
\newblock Equidistribution and counting for orbits of geometrically finite
  hyperbolic groups.
\newblock {\em J. Amer. Math. Soc.}, 26(2):511--562, 2013.

\bibitem[OW16]{OW16}
Hee Oh and Dale Winter.
\newblock Uniform exponential mixing and resonance free regions for convex
  cocompact congruence subgroups of {${\rm SL}_2(\Bbb{Z})$}.
\newblock {\em J. Amer. Math. Soc.}, 29(4):1069--1115, 2016.

\bibitem[PP90]{PP90}
William Parry and Mark Pollicott.
\newblock Zeta functions and the periodic orbit structure of hyperbolic
  dynamics.
\newblock {\em Ast\'{e}risque}, (187-188):268, 1990.

\bibitem[Pat76]{Pat76}
S.~J. Patterson.
\newblock The limit set of a {F}uchsian group.
\newblock {\em Acta Math.}, 136(3-4):241--273, 1976.

\bibitem[PPS15]{PPS15}
Fr\'{e}d\'{e}ric Paulin, Mark Pollicott, and Barbara Schapira.
\newblock Equilibrium states in negative curvature.
\newblock {\em Ast\'{e}risque}, (373):viii+281, 2015.

\bibitem[PS16]{PS16}
Vesselin Petkov and Luchezar Stoyanov.
\newblock Ruelle transfer operators with two complex parameters and
  applications.
\newblock {\em Discrete Contin. Dyn. Syst.}, 36(11):6413--6451, 2016.

\bibitem[Pol85]{Pol85}
Mark Pollicott.
\newblock On the rate of mixing of {A}xiom {A} flows.
\newblock {\em Invent. Math.}, 81(3):413--426, 1985.

\bibitem[Pol87]{Pol87}
Mark Pollicott.
\newblock Symbolic dynamics for {S}male flows.
\newblock {\em Amer. J. Math.}, 109(1):183--200, 1987.

\bibitem[Rat73]{Rat73}
M.~Ratner.
\newblock Markov partitions for {A}nosov flows on {$n$}-dimensional manifolds.
\newblock {\em Israel J. Math.}, 15:92--114, 1973.

\bibitem[Rat87]{Rat87}
Marina Ratner.
\newblock The rate of mixing for geodesic and horocycle flows.
\newblock {\em Ergodic Theory Dynam. Systems}, 7(2):267--288, 1987.

\bibitem[Rob03]{Rob03}
Thomas Roblin.
\newblock Ergodicit\'{e} et \'{e}quidistribution en courbure n\'{e}gative.
\newblock {\em M\'{e}m. Soc. Math. Fr. (N.S.)}, (95):vi+96, 2003.

\bibitem[Rue89]{Rue89}
David Ruelle.
\newblock The thermodynamic formalism for expanding maps.
\newblock {\em Comm. Math. Phys.}, 125(2):239--262, 1989.

\bibitem[Sar22]{Sar22}
Pratyush Sarkar.
\newblock {\em Uniform {E}xponential {M}ixing of {F}rame {F}lows for
  {C}ongruence {C}overs of {C}onvex {C}ocompact {H}yperbolic {M}anifolds}.
\newblock ProQuest LLC, Ann Arbor, MI, 2022.
\newblock Thesis (Ph.D.)--Yale University.

\bibitem[SW21]{SW21}
Pratyush Sarkar and Dale Winter.
\newblock Exponential mixing of frame flows for convex cocompact hyperbolic
  manifolds.
\newblock {\em Compos. Math.}, 157(12):2585--2634, 2021.

\bibitem[Sch17]{Sch17}
Torsten Schoeneberg.
\newblock Semisimple {L}ie algebras and their classification over {$\mathfrak
  p$}-adic fields.
\newblock {\em M\'{e}m. Soc. Math. Fr. (N.S.)}, (151):147, 2017.

\bibitem[Sto11]{Sto11}
Luchezar Stoyanov.
\newblock Spectra of {R}uelle transfer operators for axiom {A} flows.
\newblock {\em Nonlinearity}, 24(4):1089--1120, 2011.

\bibitem[Sul79]{Sul79}
Dennis Sullivan.
\newblock The density at infinity of a discrete group of hyperbolic motions.
\newblock {\em Inst. Hautes \'{E}tudes Sci. Publ. Math.}, (50):171--202, 1979.

\bibitem[Sul84]{Sul84}
Dennis Sullivan.
\newblock Entropy, {H}ausdorff measures old and new, and limit sets of
  geometrically finite {K}leinian groups.
\newblock {\em Acta Math.}, 153(3-4):259--277, 1984.

\bibitem[TW22]{TW22}
Nattalie Tamam and Jacqueline~M. Warren.
\newblock Effective equidistribution of horospherical flows in infinite volume
  rank-one homogeneous spaces.
\newblock {\em Ergodic Theory Dynam. Systems}, page 1–61, 2022.
\newblock Advance online publication.

\bibitem[Win15]{Win15}
Dale Winter.
\newblock Mixing of frame flow for rank one locally symmetric spaces and
  measure classification.
\newblock {\em Israel J. Math.}, 210(1):467--507, 2015.

\end{thebibliography}
\end{document}